\newlength{\defbaselineskip}
\pgfplotsset{width=7cm,compat=newest} 
\crefname{section}{Section}{Sections}
\crefname{subsection}{Subsection}{Subsections}
\crefname{appsec}{Appendix}{Appendices}
\crefname{appendix}{Appendix}{}
\crefname{equation}{}{}
\crefname{figure}{Figure}{Figures}
\crefname{definition}{Definition}{Definitions}
\crefname{theorem}{Theorem}{Theorems}
\crefname{proposition}{Proposition}{Propositions}
\crefname{corollary}{Corollary}{Corollaries}
\crefname{remark}{Remark}{Remarks}
\crefname{lemma}{Lemma}{Lemmas}
\crefname{theoa}{Theorem}{Theorems}
\crefname{propa}{Proposition}{Propositions}
\crefname{rmkappa}{Remark}{Remarks}
\crefname{lema}{Lemma}{Lemmas}
\pgfplotsset{width=7cm,compat=newest} 
\theoremstyle{plain}
\newtheorem{theorem}{\textbf{{Theorem}}}[section]
\newtheorem{proposition}[theorem]{\textbf{{Proposition}}}
\newtheorem{lemma}[theorem]{\textbf{{Lemma}}}
\newtheorem{corollary}[theorem]{\textbf{{Corollary}}}
\newtheorem{definition}{\textbf{{Definition}}}[section]
\newtheorem{remark}[theorem]{\textbf{{Remark}}}
\def\N{\nabla}
\def\vp{\varphi}
\newcommand{\integrale}{\int_\Omega}
\def\al{\alpha}
\def\ro{\rho}
\def\ga{\gamma}
\def\eps{\varepsilon}
\def\te{\theta}
\def\vep{\varepsilon}
\def\R{\mathbb{R}}
\def\tS{\tilde{S}}
\def\into{\int_\Omega}
\def\continue#1{C^0([0,T]; \elle{#1})} 
\def\limitate#1{L^\infty((0,T); \elle{#1})} 
\def\paraccaloc{L^2_{loc}((0,T); H^1_0(\Omega))}
\def\be{\begin{equation}}
\def\ee{\end{equation}}
\def\bes{\begin{equation*}}
\def\ees{\end{equation*}}
\def\bac{\begin{array}{c}}
\def\eac{\end{array}}
\def\beac{\be\begin{array}{c}}
\def\eeac{\end{array}\ee}
\def\besac{\bes\begin{array}{c}}
\def\eesac{\end{array}\ees}
\def\rife#1{(\ref{#1})}
\def\elle#1{L^{#1}(\Omega)}
\def\parelle#1{L^{#1}(Q_T)}
\def\dive{{\rm div}}
\def\vfi{\varphi}
\newcommand{\meas}{\text{meas}}
\newcommand{\om}{\omega}
\newcommand{\si}{\sigma}
\newcommand{\de}{\delta}
\newcommand{\ds}{\displaystyle}
\newcommand{\D}{\Delta}
\definecolor{seagreen}{rgb}{0.18, 0.55, 0.34}
\renewcommand{\theequation}{\thesection.\arabic{equation}}
\numberwithin{equation}{section}
\long\def\salta#1{\relax}
\def\og{\leavevmode\raise.3ex\hbox{$\scriptscriptstyle\langle\!\langle$~}}
\def\fg{\leavevmode\raise.3ex\hbox{~$\!\scriptscriptstyle\,\rangle\!\rangle$}}
\author[M. Magliocca]{Martina Magliocca}
\address[M. Magliocca]{Dipartimento di Matematica, Universit\`a degli Studi Tor Vergata, Via della Ricerca Scientifica 1, 00133 Rome, Italy. 
	\\ \url{magliocc@mat.uniroma2.it}}
\author[A. Porretta]{Alessio Porretta}
\address[A. Porretta]{Dipartimento di Matematica, Universit\`a degli Studi Tor Vergata, Via della Ricerca Scientifica 1, 00133 Rome, Italy. 
	\\ \url{porretta@mat.uniroma2.it}}
\keywords{Nonlinear parabolic equations, Unbounded data, Repulsive Gradient, Regularity, Uniqueness, Nonexistence} 
\begin{document}

\title[Local and global time decay]{Local and global time decay for parabolic equations with\\ super linear
first order terms}

\begin{abstract}
We study a class of parabolic equations having first order terms with superlinear (and subquadratic) growth. The model problem is  the so-called viscous Hamilton-Jacobi equation  with superlinear Hamiltonian. We address the problem of having unbounded initial data and we develop a   local theory yielding well-posedness for initial data in the optimal Lebesgue space, depending on the superlinear growth. Then we prove regularizing effects, short and long time decay estimates of the solutions. 

Compared to previous works, the main novelty is that our results  apply to nonlinear operators with just  measurable and bounded coefficients, since we totally avoid the use of gradient estimates of higher order. By contrast we only rely on elementary arguments using equi-integrability, contraction principles  and truncation methods for weak solutions.     
\end{abstract}

\maketitle
%\tableofcontents

\section{Introduction}

In this paper we wish to study the behavior of  parabolic equations perturbed by first order terms having superlinear growth. In what follows,   $\Omega$ is an open bounded subset of $\mathbb{R}^N$ and, for  $T>0$, we set $Q_T:= (0,T)\times \Omega$. The reference case to keep in mind is the perturbed heat equation
\be\label{vhj}
\begin{cases}
\begin{array}{ll}
\ds u_t -\Delta u = |Du|^q & \ds\text{in }Q_T,\\
\ds u=0  &\ds \text{on }(0,T)\times \partial \Omega,\\
\ds u(0,x)=u_0(x) &\ds \text{in } \Omega,
\end{array}
\end{cases}
\ee
with Dirichlet boundary conditions.
This model example, which is often named as {\it viscous Hamilton-Jacobi equation}, has been extensively studied by many authors who pointed out different features depending on the possibly superlinear growth $q>1$ of the right-hand side. If $u_0$ is $C^1$, the classical parabolic theory applies  and provides with  a unique classical solution, which  is global in time if $q\leq 2$. On the other hand,  a  gradient blow-up may happen in finite time if $q>2$ and, in this latter case, a global in time solution exists in the weaker formulation of viscosity solutions. We refer to \cite{BdL,Sou,Sou-Z},  for this kind of analysis. For all $q>1$, the solution of \rife{vhj} is  known to decay in long time with the same rate as the heat equation (see \cite{BDL,PZ}):
\be\label{ratevhj}
\|u(t)\|_{L^\infty(\Omega)}+ \sqrt t\, \|Du(t)\|_{L^\infty(\Omega)} \leq C \, e^{-\lambda_1 t}
\ee
where $\lambda_1$ is the first eigenvalue of the Laplacian with Dirichlet conditions.

The aforementioned results strongly rely on the maximum principle and on the $L^\infty$-contr\-action property, as well as on the standard regularizing properties of the heat semigroup, including Bernstein's type estimates which are often employed to handle the decay of $Du$ and therefore of the superlinear term.

The situation is much more delicate if one considers {\it unbounded} initial data and solutions. Indeed, in the {\it unbounded} setting, the superlinearity has relevant effects and the results are no more identical 
to the standard heat equation.
A  satisfying  theory  was developed in \cite{BASW} when $u_0$ belongs to a  Lebesgue space $\elle{\si}$. In particular, the authors show that some necessary extra conditions are needed both for the existence and for the uniqueness of solutions. For the existence, it is needed that $q\leq 2$ and that $\si \geq \frac{N(q-1)}{2-q}$ (if $q<2$). For the uniqueness, some suitable class of solutions is also needed, where the authors use the gradient estimates of the heat semigroup. Those results should be compared with similar ones which hold for the superlinear problem
 \be\label{power}
 u_t -\Delta u = |u|^q
 \ee
where again some necessary restriction on the initial Lebesgue class is needed in order that  a suitable well-posed local theory be developed (see e.g. \cite{BrCa}).

The starting point of our work is the remark that the methods used in \cite{BASW} to deal with such problem rely on the well-known regularizing properties of the heat kernel, and in particular on the gradient estimates of the heat semigroup which are, essentially, a consequence of the Calderon-Zygmund $W^{2,p}$ regularity. To this respect, the above methods do not seem to extend to general operators having merely bounded measurable coefficients. For instance, the above theories do not apply even to slightly inhomogeneous variation of the reference problem \rife{vhj} as the following
\be\label{mod}
\begin{cases}
\begin{array}{ll}
\ds  u_t -\dive(A(t,x)D u) = |Du|^{q}   &\ds  \text{in }Q_T,\\
\ds  u=0  &\ds \text{on }(0,T)\times \partial \Omega,\\
\ds  u(0,x)=u_0(x) &\ds \text{in } \Omega,
\end{array}
\end{cases}
\ee
where $A(t,x) $ is a  coercive matrix with measurable and bounded coefficients.  

The goal of our paper is to give a general basic theory for equations with possibly non smooth coefficients, providing  a suitable setting for the existence and uniqueness of solutions,   showing that regularizing effects and long time decay may be proved in this more general setting as well. The main point that we wish to address is that  a satisfying theory does not need the support of the Calderon-Zygmund estimates since the subquadratic growth of the nonlinearity is enough to handle  both local and global behaviour of solutions in a suitable energy class. 

In our study, we leave aside the case of quadratic growth $q=2$ for two main reasons: first of all, this case has been already extensively studied, even in  a general nonlinear framework (see e.g. \cite{DGP,DGS1,DGS2} for existence results and global a  priori estimates), secondly this case always appears to be  special since one can rely on the Hopf-Cole transform in order to get rid of the superlinear first order term. In this way, the case $q=2$ can be reduced to the analysis of semilinear equations  and,  even the short and long time behavior should be deduced through this strategy. This is of course no longer true for the growth $q<2$, which is precisely the object of this work.
%This kind of approach was earlier developed in the stationary case (\cite{GMP}).
\medskip

Let us now be more precise concerning the results that we prove.
For the sake of clarity,  at first we state  our results for the model problem \rife{mod},   in order to better understand the main features of  our work.
%and later we extend similar statements to  general nonlinear operators and Hamiltonians.

%In what follows, we set
%$$
%\sigma:= \begin{cases}  \frac{N(q-1)}{2-q} & \hbox{if $2-\frac N{N+1}<q<2$} \\
%1 & \hbox{if $1<q< 2-\frac N{N+1}$}
%\end{cases}
%$$
%By a weak solution of \rife{mod}, we mean  a function $u\in L^2_{loc}((0,T); H^1_0) \cap C^0([0,T];L^\si(\Omega))$ which is a distributional solution in $(0,T)\times \Omega)$ and satisfies $u(0)=u_0$ in $\elle\sigma$ (see Definition \ref{def1}).

\begin{theorem}\label{sap2}
Let $\Omega$ be a bounded domain in $\R^N, N>2$. Let $A(t,x)$ be a $N\times N$  matrix, with  bounded measurable coefficients $a_{i,j}(t,x)$, which is assumed to  be coercive (i.e. $A(t,x)\xi\cdot \xi\geq \alpha |\xi|^2$ for all $\xi\in \R^N$, for some $\alpha>0$).  Let us set
$$
\sigma:= \begin{cases} \ds \frac{N(q-1)}{2-q} & \hbox{if $\,\,\ds 2-\frac N{N+1}<q<2\,$,} \\
1 & \hbox{if $\,\,\ds 1<q< 2-\frac N{N+1}\,$.}
\end{cases}
$$
%Assume that one of the following two conditions are satisfied:
%\begin{itemize}
%\item  $2-\frac N{N+1}<q<2$ and  $u_0\in \elle\sigma$, $\sigma = \frac{N(q-1)}{2-q}$.
%\vskip0.2em
%\item  $1<q< 2-\frac N{N+1}$ and $u_0\in \elle1$.
%\end{itemize}
%
Then we have:
%\eqref{A1}, \eqref{A2}, \eqref{H} with $1\le q<2$. Then, for every $u_0\in \elle{\si}$ we have:
\begin{enumerate}[(i)]
\item there exists a unique weak solution $u$ of \rife{mod}, namely a  unique function $u\in L^2_{loc}((0,T); H^1_0) \cap C^0([0,T];L^\si(\Omega))$ which is a distributional solution of \rife{mod} in $Q_T$ and satisfies $u(0)=u_0$ in $\elle\sigma$ .

\item the solution $u$ satisfies the short-time estimates
\be\label{short1}
\|u(t)\|_{L^{r}(\Omega)} \le \frac{C_r}{t^{N\frac{(r-\si)}{2r\si}}}\qquad\forall\,\,t\in(0,1)
\ee
for any  $r>\si$, where $C_r$ depends on $r,N,\Omega, q, u_0$, and
\be\label{short2}
\|u(t)\|_{L^{\infty}(\Omega)} \le \frac{C_0}{t^{ \frac{N}{2\si}}}\qquad\forall\,\,t\in(0,1)
\ee
where $C_0$ depends on  $N,\Omega, q, u_0$. In addition the constants $C_r, C_0$  can be  uniformly chosen whenever $u_0$ varies in compact sets of $\elle{\si}$;

\item the solution $u$ satisfies the long-time decay
\be\label{long}
\|u(t)\|_{L^{\infty}(\Omega)}\le C_1\, e^{-\lambda_1 t}\quad\forall\,\,t\in(1,\infty)
\ee
where $C_1$  depends on  $N,\Omega, q, u_0$ and can be  uniformly chosen whenever $u_0$ varies in compact sets of $\elle{\si}$.
\end{enumerate}
\end{theorem}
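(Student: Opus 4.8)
The plan is to construct the solution by a stable approximation, to derive all the regularizing effects from a single family of energy inequalities, and to obtain uniqueness and the long-time decay from contraction and Poincar\'e estimates; crucially, no $W^{2,p}$ regularity or semigroup gradient bound is ever used. First I would regularize the data and the nonlinearity: take bounded $u_{0,n}\to u_0$ in $\elle{\sigma}$ and replace $|Du|^q$ by the bounded term $|Du|^q/(1+\tfrac1n|Du|^q)$, so that standard theory gives, for each $n$, a bounded weak solution $u_n$. Everything then reduces to producing estimates that are \emph{uniform in $n$} and stable enough to pass to the limit.

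The heart of the matter is the energy inequality obtained by testing against $|u_n|^{p-2}u_n$ for $p\ge\sigma$ (for the threshold value $\sigma=1$ one starts instead from the sign test function and the Boccardo--Gallou\"et $L^1$ framework). Setting $w=|u_n|^{p/2}$, coercivity controls $\int_\Omega|Dw|^2$ on the left, hence by Sobolev $\|u_n\|_{L^{pN/(N-2)}(\Omega)}^{p}$; Young's inequality on the right-hand side $\int_\Omega|Du_n|^q|u_n|^{p-1}$ absorbs the gradient part into the left and leaves a single lower-order term $\int_\Omega|u_n|^{\beta}$ with $\beta=p+\tfrac{2(q-1)}{2-q}$. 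Interpolating $\|u_n\|_{L^\beta}$ between $\|u_n\|_{L^p}$ and $\|u_n\|_{L^{pN/(N-2)}}$, the high-norm exponent turns out to be exactly $\beta\theta=\tfrac N2(\beta-p)=\tfrac{N(q-1)}{2-q}=\sigma$. This identity is the structural reason for the value of $\sigma$: when $p>\sigma$ one has $\beta\theta<p$ and a further Young step reabsorbs the high norm, yielding
\[
\tfrac{d}{dt}\|u_n\|_{L^p}^p+c\,\|u_n\|_{L^{pN/(N-2)}}^{p}\le C\,\|u_n\|_{L^p}^{\gamma},
\]
uniformly in $n$; at the borderline $p=\sigma$ the exponent equals $p$ and the term cannot be absorbed by homogeneity alone, so the absolute continuity of the $\elle{\sigma}$-norm (equi-integrability) must be used to close the estimate \emph{locally} in time --- which is exactly why only a local theory is available in the optimal space.

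From this single inequality I would read off (i)--(ii). The case $p=\sigma$ bounds $u_n$ in $L^\infty(0,T;\elle{\sigma})\cap L^2_{loc}(H^1_0)$; a truncation argument (testing with $T_k(u_n)$ and using $q<2$) then turns these bounds into equi-integrability of $|Du_n|^q$ in $L^1(Q_T)$, and it is this property, not any gradient bound, that lets Vitali's theorem pass to the limit in the nonlinear term and produce a distributional solution $u\in L^2_{loc}(H^1_0)\cap C^0([0,T];\elle{\sigma})$ with $u(0)=u_0$. Running the gain-of-integrability inequality as a Moser--Alikakos iteration over $p_k\sim\sigma\,(N/(N-2))^k$ with the correct time weights produces \eqref{short1}, and, as $p_k\to\infty$, the $L^\infty$ bound \eqref{short2}; homogeneity of the exponents forces exactly the powers $N(r-\sigma)/(2r\sigma)$ and $N/(2\sigma)$, and the uniformity over compact subsets of $\elle{\sigma}$ is inherited from the continuity of the scheme.

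Uniqueness and \eqref{long} both come from contraction. For two solutions, testing the equation for $w=u_1-u_2$ against $w$ and using $\big||Du_1|^q-|Du_2|^q\big|\le q(|Du_1|+|Du_2|)^{q-1}|Dw|$ gives, after Young, $\tfrac{d}{dt}\|w\|_{L^2}^2+\alpha\|Dw\|_{L^2}^2\le C\int_\Omega(|Du_1|+|Du_2|)^{2(q-1)}|w|^2$; since $q<2$, H\"older puts the weight in $L^{1/(q-1)}$ and leaves $w$ in $L^{2/(2-q)}$, which is exactly the integrability furnished by \eqref{short1}, and a Gronwall argument with the integrable time weight from \eqref{short1} near $t=0$ forces $w\equiv0$. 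For the decay, \eqref{short2} makes $u$ bounded from $t=1$ on, so the gradient source becomes genuinely higher order; the energy estimate together with Poincar\'e's inequality yields exponential decay of $\|u(t)\|_{L^2}$, and one unit step of the regularizing effect transfers it to $\|u(t)\|_{L^\infty}$ at the same rate. The main obstacle throughout is precisely the passage to the limit in the superlinear first-order term: the whole argument hinges on establishing equi-integrability of $|Du_n|^q$ by truncation, the elementary device that replaces the forbidden Calder\'on--Zygmund estimates.
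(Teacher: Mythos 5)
Your existence and short-time regularization outline is broadly consistent with the paper's strategy (approximation as in \cite{M}, energy estimates on powers of the solution with the structural identity fixing $\sigma$, Moser-type iteration for \rife{short1}--\rife{short2}), but the proposal has two genuine gaps, and they occur exactly at the two points the paper singles out as the real difficulties.

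\textbf{Uniqueness.} You prove uniqueness by linearizing, $\bigl||Du_1|^q-|Du_2|^q\bigr|\le q(|Du_1|+|Du_2|)^{q-1}|Dw|$, followed by Young, H\"older and Gronwall. This is precisely the approach the paper declares unavailable for merely bounded measurable coefficients, and for concrete reasons. To close a \emph{linear} Gronwall inequality for $y(t)=\|w(t)\|_{L^2}^2$ you must absorb $\|w\|_{L^{2/(2-q)}}$ into $\|Dw\|_{L^2}$; by Sobolev interpolation this needs $\frac{2}{2-q}\le 2^*$, i.e.\ $q\le \frac{N+2}{N}$, which fails on a large part of the range $2-\frac{N}{N+1}<q<2$. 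Worse, even when the exponents fit, the resulting weight involves powers of $\|Du_i(t)\|_{L^2}$ whose integrability near $t=0$ is unknown: solutions are only in $L^2_{loc}((0,T);H^1_0(\Omega))$, \rife{short1} controls $\|u(t)\|_{L^r}$ but carries no gradient information, and with measurable $A(t,x)$ there is no Calder\'on--Zygmund or heat-kernel bound to supply a gradient rate. Without the linear form, an inequality $y'\le a(t)\,y^{1-\theta}$ with $y(0)=0$ does \emph{not} force $y\equiv 0$. The paper's route (\cref{compa}) is entirely different: one compares $u$ with $(1-\eps)v$, uses the \emph{convexity} of $\xi\mapsto|\xi|^q$ to show that $[u-(1-\eps)v]/\eps$, after a linear-in-time correction, is a subsolution of a problem of the same type, applies the uniform $L^\sigma$ a priori bound of \cref{sap}, and lets $\eps\to0$. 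Convexity is essential there and is never invoked in your argument.

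\textbf{Long-time decay.} The claim that \rife{short2} makes $u$ bounded from $t=1$ on, ``so the gradient source becomes genuinely higher order'' and Poincar\'e yields exponential decay, is not correct. The source is superlinear in $Du$, not in $u$: with $\|u\|_{L^\infty}\le M$, Young's inequality gives
\begin{equation*}
\frac{d}{dt}\|u\|_{L^2}^2+\alpha\|Du\|_{L^2}^2\le C\,M^{\frac{2(q-1)}{2-q}}\|u\|_{L^2}^2\,,
\end{equation*}
and Poincar\'e produces decay only if $M$ is \emph{small}; the short-time estimate at $t=1$ gives a bound $C_0$ depending on $u_0$, not smallness, so the decay cannot even be started. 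The missing ingredients are (a) the $L^\infty$-contraction for weak solutions, i.e.\ the monotonicity of $t\mapsto\|u(t)\|_{L^\infty}$ (\cref{corinfty}, itself a consequence of the $G_k$-contraction \cref{lemGk}), and (b) the contradiction argument of \cref{propdec}: if $\|u(t)\|_{L^\infty}\to\ell>0$, choosing $k$ just below $\|u(t/2)\|_{L^\infty}$ makes $G_k(u(t/2))$ small in $L^\sigma$, and the regularizing estimate \rife{gkdecay} forces $\|u(t)\|_{L^\infty}\le \|u(t/2)\|_{L^\infty}-\frac{\eps}{2}+o(1)$, a contradiction. Only after $\|u(t)\|_{L^\infty}\to0$ does the absorption $\gamma\int_\Omega|Du|^q|u|\,dx\le \gamma c_S\|u\|_{L^\sigma}^{q-1}\int_\Omega|Du|^2\,dx$ become effective and give the sharp rate $\lambda_1$ as in \cref{sharp}. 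Both gaps trace back to the same omission: you never establish the $G_k$-contraction principle and the $L^\infty$ monotonicity that form the backbone of the paper, relying instead on linearization and mere boundedness, which are exactly the tools shown to be insufficient in this setting.
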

 
Let us comment  below the above result in more details.

\begin{itemize}

\item When $2-\frac N{N+1}<q<2$, the Lebesgue class $\elle{\frac{N(q-1)}{2-q}}$ for the initial data is optimal in order to have the existence of a solution.  
This was already shown in \cite{BASW} for the Cauchy problem and the heat operator. In the same spirit,   we also give a counterexample for the Dirichlet problem  in \cref{optimalsec},  showing optimality of this threshold. 

\item The borderline case $q= 2-\frac N{N+1}$ is a bit special  and $L^1$-data  are not generically admitted, in this case optimal classes necessarily lead outside Lebesgue spaces. However, results can be given for $u_0\in \elle r$, whatever $r>1$, see \cref{q=}.

\item Similar results hold for $N=1$ and $N=2$, but the value of $\sigma$ should be suitably adapted according to the  Sobolev embedding for $H^1_0(\Omega)$ in dimensions $N=1$ or $N=2$, see \cref{N=}.

\item 
%The class of solutions is optimal: the stationary functions $u(x)=.....$ do not belong to $C^0(L^\si)$...\footnote{non ho capito}
The class of solutions which is considered in the above statement consists, roughly speaking, of finite energy solutions at positive time, but the only global (in time) requirement is the continuity in $\elle\sigma$. It is remarkable that this class is enough to provide uniqueness. 

However, other weak formulations are also possible. When $2-\frac N{N+1}<q<2$, we  show that an equivalent formulation can be considered where, rather than requiring the $\continue\sigma$ regularity, we ask  the solutions to enjoy the global energy information:
$$
(1+|u|)^{\frac \sigma 2-1}u \in L^2(0,T; H^1_0(\Omega))
$$
and the weak formulation is extended to $[0,T)\times \Omega$, eventually using the framework of renormalized solutions if $\sigma<2$ (see \cref{def2,defrin}). We discuss those formulations  in \cref{notions} (or in \cref{datil1} for $L^1$-data) and we prove their equivalence in \cref{app}.  
\end{itemize}

Let us now spend a few words on the main ingredients of our proofs.
We stress that  the uniqueness in general  may fail for  unbounded solutions of viscous Hamilton-Jacobi equations (see e.g. \cite{BaPo,ADP}).
For operators with smooth coefficients, as in \rife{vhj} considered in \cite{BASW}, one could recover uniqueness by requiring extra-integrability on the gradient of solutions. This option, which essentially allows one to use  a linearization approach for proving uniqueness, can be justified whenever gradient estimates (or, say, the $W^{2,p}$-regularity) are achievable,  but certainly is not affordable in the context of merely bounded measurable coefficients  as we deal with. In fact, we prove the uniqueness with a refinement of the convexity argument  used in  \cite{BaPo},  similarly as it has been developed  in \cite{LePo} for the stationary case.

%
%
%\item The existence was proved in \cite{Ma} in more generality..
%
% The uniqueness is proved later in more generality....
%
%\end{itemize}
%
%Let us mention the key tools that we use in our proofs.
%
%The uniqueness is  a refinement of the convexity argument  in \cite{BaPo},  as developed  in \cite{LePo} for the stationary case.

As far as the large time decay estimates  are concerned, we need  a  different strategy compared to what was done in \cite{BDL,PZ}. In those papers the authors used either the $L^\infty$- convergence given by viscosity solutions theory (combined with the heat kernel estimates) or the Bernstein's method for a gradient decay.

The idea of our proof is different though very simple and combines two facts which are  fundamental in this type of problems. The first one is that when data are small in $\elle\si$ then the equation behaves like the unperturbed one: in particular, for small data $u_0$ we have the standard exponential estimate in $L^\si$
$$
\frac{d}{dt} \| u(t)\|_{\elle\si}^\sigma + 4\, \lambda_1\frac{\alpha(\si-1)}{\si}  \| u(t) \|_{L^{\si}(\Omega)}^{\si}\le 0\,.
$$ 
Notice that this estimate is true for any $\si$ in the case of the heat equation (and the best decay rate follows by taking the optimal case $\sigma=2$). By contrast, in the case of problem \rife{mod}, one obtains  a similar inequality  only for the special $\si$  given by \rife{ID1},  and by requiring additionally that  the data be small in this class.
On the other hand, this property remains  true for merely subsolutions, so we can use it for  $(u-k)_+$ which can be made small for sufficiently large $k$. This also explains the kind of short-time estimates \rife{short1}-\rife{short2}, which do not depend only on the norm of the initial data, but actually rely on their equi-integrability. To this extent, even the short-time estimates are somehow different than in the case of coercive operators (see e.g. \cite{Po}).

The second ingredient, which is essential for the long-time behaviour, is that the $L^\infty$-norm is decreasing. In  this kind of problems  this is standard for classical solutions, but we show it still holds in our class of weak solutions.  

Finally, we reason by splitting our solution as the sum of a term which is small in $\elle\si$ - which decays exponentially - plus  a bounded term.  A  suitable combination of 
the short-time regularizing effect and the decreasing character of the $L^\infty$-norm will allow us to conclude the global decay \rife{long}.
\vskip0.5em

Most of the conclusions of \cref{sap2} continue to hold in a much wider generality, specifically for nonlinear coercive operators in divergence form (with just measurable bounded coefficients) and  nonlinear Hamiltonians $H(t,x,\xi)$. To be more precise, let us consider the  following Cauchy-Dirichlet problem:
\begin{equation}\label{P}\tag{P}
\begin{cases}
\begin{array}{ll}
\ds  u_t-\text{div} \left(a(t,x,u, D u) \right)=H(t,x, D u) &\ds  \text{in }Q_T,\\
\ds u=0  &\ds \text{on }(0,T)\times \partial \Omega,\\
\ds u(0,x)=u_0(x) &\ds \text{in } \Omega,
\end{array}
\end{cases}
\end{equation}
where $a(t,x,u,\xi):(0,T)\times \Omega\times \mathbb{R}\times \mathbb{R}^N\to  \mathbb{R}^N$ and $H(t,x,\xi):(0,T)\times\Omega\times \mathbb{R}^N\to \mathbb{R}$ are Caratheodory functions (i.e. measurable with respect to $(t,x)$ and continuous in $(u,\xi)$)
satisfying the following coercivity and growth conditions:
\begin{equation}\label{A1}\tag{A1}
\exists \,\alpha>0:\quad
\alpha|\xi|^2\le a(t,x,u,\xi)\cdot\xi,
\end{equation}
\begin{equation}\label{A2}\tag{A2}
\exists \,\lambda>0:\quad |a(t,x,u,\xi)|\le \lambda[|u|+|\xi|+h(t,x)],\quad h\in L^{2}(Q_T),
\end{equation}
and
\begin{equation}\label{H}\tag{H}
\exists \,\,\gamma>0\,\,\text{s.t. }\,\,
|H(t,x,\xi)|\le \gamma |\xi|^q\quad \text{with}\quad 1< q<2
\end{equation}
for almost every $(t,x)\in Q_T$ and for every $\xi\in \mathbb{R}^N$.

As already explained above, according  to the value of $q$,  the initial datum $u_0$ is taken in the following Lebesgue spaces, at least if $N>2$ (for $N=1,2$, see \cref{N=}):
\begin{equation}\label{ID1}\tag{ID1}
u_0\in L^{\si}(\Omega)\quad\text{with}\quad \si=\frac{N(q-1)}{2-q}\,,\quad\text{if} \quad 2-\frac{N}{N+1}<q<2,
\end{equation}
\begin{equation}\label{ID2}\tag{ID2}
u_0\in L^1(\Omega)\,,\quad\text{if} \,\,\,1<q<2-\frac{N}{N+1}.
\end{equation}
Eventually, by a weak solution of \rife{P} we mean a  function $u\in L^2_{loc}((0,T);H^1_0(\Omega))$ which is a distributional solution in $Q_T$ and belongs to $\continue\sigma$, if $2-\frac{N}{N+1}<q<2$ or, respectively, to $\continue1$ if  $1<q<2-\frac{N}{N+1}$. Other  formulations, equivalent to the previous one but yet interesting (including notions of renormalized solutions), are discussed in detail in the next sections. Let us stress that  the growth assumption \rife{A2} is only assumed in order to give a standard meaning to those formulations (e.g. $\text{div} \left(a(t,x,u, D u) \right)$ has a  usual distributional meaning whenever $u\in L^2_{loc}((0,T);H^1_0(\Omega))$). However, \rife{A2} does not play any significant role in all the estimates obtained, which only rely on the coercivity of the second order part and on the growth of the first order terms.

Similar results as stated in \cref{sap2} can be obtained in the general case.  

\begin{theorem} Let $N>2$, and assume that \rife{A1}-\rife{A2} and \rife{H} hold true, with $u_0$ satisfying either \rife{ID1} or \rife{ID2}. 

Then,  there exists a weak solution of \rife{P}. In addition, any weak solution satisfies the same estimates \rife{short1}--\rife{long} as in \cref{sap2}, where now the constants also depend on $\alpha$ and $\gamma$ (appearing in \rife{A1}, \rife{H} respectively).
\end{theorem}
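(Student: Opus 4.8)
The plan is to separate the statement into two independent tasks: (a) showing that \emph{every} weak solution obeys the a priori bounds \eqref{short1}--\eqref{long}, and (b) constructing one such solution. The key point is that the estimates proved for \cref{sap2} use nothing beyond the coercivity of the principal part and the growth of the first order term. Hence for (a) I would re-run the energy scheme of \cref{sap2} verbatim, with the coercivity constant of $A$ replaced by $\alpha$ from \eqref{A1} and the unit factor in front of $|Du|^q$ replaced by $\gamma$ from \eqref{H}; the constants in \eqref{short1}--\eqref{long} then acquire \emph{exactly} the announced dependence on $\alpha$ and $\gamma$. Assumption \eqref{A2}, serving only to give $\dive a(t,x,u,Du)$ a distributional meaning, never enters the quantitative estimates.

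The heart of (a) is a single differential inequality. Testing \eqref{P} (on suitable truncations/approximations) with $|u|^{\sigma-2}u$ and setting $v:=|u|^{\frac{\sigma}{2}-1}u$, coercivity \eqref{A1} together with growth \eqref{H} yields
\[
\frac{d}{dt}\,\|u(t)\|_{L^\sigma(\Omega)}^\sigma + \frac{4\alpha(\sigma-1)}{\sigma}\,\|Dv(t)\|_{L^2(\Omega)}^2 \;\le\; \sigma\gamma\int_\Omega |Du|^q|u|^{\sigma-1}\,dx .
\]
Rewriting the right-hand side in terms of $v$ and applying H\"older together with the parabolic Gagliardo--Nirenberg--Sobolev embedding of $L^\infty(0,\tau;L^2)\cap L^2(0,\tau;H^1_0)$ into $L^{2(N+2)/N}$, the exponents match precisely when $\sigma=\frac{N(q-1)}{2-q}$ — the scale-invariant Lebesgue exponent of \eqref{ID1} — giving a bound of the form $C\gamma\,(\sup_{[0,\tau]}\|u\|_{L^\sigma}^\sigma)^{(2-q)/N}\int_{Q_\tau}|Dv|^2$. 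Thus, once $\sup_{[0,\tau]}\|u(t)\|_{L^\sigma}$ lies below a threshold depending only on $\alpha,\gamma,N,q$, the superlinear term is absorbed into the dissipation and one recovers the clean decay $\frac{d}{dt}\|u\|_{L^\sigma}^\sigma + 4\lambda_1\frac{\alpha(\sigma-1)}{\sigma}\|u\|_{L^\sigma}^\sigma\le 0$; a continuity/bootstrap argument then closes the estimate for small data.

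To remove the smallness restriction I would apply this inequality not to $u$ but to $(u-k)_+$, which is again a subsolution: by equi-integrability one has $\|(u_0-k)_+\|_{L^\sigma}\to0$ as $k\to\infty$, uniformly on compact subsets of $L^\sigma$, so the threshold is met for $k$ large. Iterating the energy inequality through increasing exponents $r$ (a Moser/Alikakos iteration, using the $L^\sigma$-smallness already gained to absorb the gradient term at each level) produces the short-time smoothing \eqref{short1} and, as $r\to\infty$, \eqref{short2}, with constants governed by the equi-integrability of $u_0$. A De Giorgi-type truncation argument shows that $t\mapsto\|u(t)\|_{L^\infty(\Omega)}$ is non-increasing for $t>0$. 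Finally, \eqref{long} follows by splitting $u$ into a high-level part $(u-k)_+$ which is small in $L^\sigma$ and decays like $e^{-\lambda_1 t}$, plus a bounded part, upgrading the $L^\sigma$-decay to $L^\infty$-decay through the smoothing effect combined with the monotonicity of the sup-norm. For \eqref{ID2} ($\sigma=1$) the same scheme is run on the truncations $T_k(u)$ within the renormalized framework.

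For existence (b) I would argue by approximation: take $u_0^n\to u_0$ in $L^\sigma$ with $u_0^n$ bounded, truncate $H$ to a bounded Caratheodory function $H_n$, and solve the approximate Leray--Lions problems, whose solvability is classical for bounded data. The uniform bounds of part (a) give $u_n$ bounded in $L^\infty(0,T;L^\sigma)\cap L^2_{loc}((0,T);H^1_0)$ with a uniform modulus of continuity in $C([0,T];L^\sigma)$, whence $u_n\to u$ in $L^2_{loc}$ and a.e. by Aubin--Lions, while the energy bound makes $|Du_n|^q$ equi-integrable. The main obstacle — and the only place where the mere measurability of the coefficients genuinely bites, since no Calderon--Zygmund regularity is available — is the passage to the limit in the nonlinear terms $a(t,x,u_n,Du_n)$ and $H_n(t,x,Du_n)$. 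I would obtain the a.e. convergence of the gradients by the Boccardo--Murat truncation technique: testing the difference of the equations with $T_\delta(u_n-u)$ localized away from $t=0$, coercivity forces $\int_{\{|u_n-u|<\delta\}}(a(\cdot,u_n,Du_n)-a(\cdot,u_n,Du))\cdot D(u_n-u)\to0$, from which $Du_n\to Du$ a.e.; the superlinear right-hand side then passes to the limit by Vitali's theorem, using its equi-integrability. This produces a weak solution, which by part (a) satisfies \eqref{short1}--\eqref{long}.
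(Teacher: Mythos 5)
Your part (a) follows essentially the same route as the paper: the $L^\sigma$ energy inequality whose exponents close precisely when $\sigma=\frac{N(q-1)}{2-q}$, absorption of the superlinear term under smallness of $\sup_t\|\cdot\|_{L^\sigma(\Omega)}$ closed by a continuity-in-time argument, removal of the smallness restriction by working with $G_k(u)$ and the equi-integrability of $u_0$ (this is \cref{lemGk} and \cref{sap}), the $L^\sigma$--$L^r$--$L^\infty$ smoothing by iteration (\cref{Prop2r} and \cref{bd}, where the paper invokes the Moser-type \cref{moser} borrowed from \cite{P2}), the non-increasing $L^\infty$ norm (\cref{corinfty}, obtained there directly from the contraction lemma with $k=\|u_0\|_{L^\infty(\Omega)}$ rather than by a De Giorgi argument), and the splitting $u=G_k(u)+T_k(u)$ for the long-time behaviour. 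One imprecision: the splitting alone gives decay of the $G_k$-level at \emph{some} rate $\lambda>0$ (Poincar\'e applied to $|G_k(u)|^{\sigma/2}$), not at the sharp rate $\lambda_1$ required in \rife{long}; the paper needs the extra bootstrap of \cref{propdec} and \cref{sharp} (first show $\|u(t)\|_{L^\infty(\Omega)}\to0$, then rerun the $L^2$ energy inequality, whose perturbation coefficient now vanishes asymptotically, and transfer the rate $\lambda_1$ to all norms via the smoothing effect). Your sketch as written would not produce the constant $\lambda_1$.

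The genuine gap is in part (b). Note first that the paper does not prove existence at all: it is delegated to \cite{M}. Your argument must therefore stand on its own, and it does not: the step ``coercivity forces $\int_{\{|u_n-u|<\delta\}}(a(\cdot,u_n,Du_n)-a(\cdot,u_n,Du))\cdot D(u_n-u)\to0$, from which $Du_n\to Du$ a.e.'' is false as stated. Coercivity \rife{A1} gives no sign, let alone smallness, for the quantity $(a(t,x,u_n,Du_n)-a(t,x,u_n,Du))\cdot D(u_n-u)$; what the Boccardo--Murat technique requires is the \emph{strict monotonicity} of $\xi\mapsto a(t,x,s,\xi)$, i.e.\ a Leray--Lions structure. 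This hypothesis is not among \rife{A1}--\rife{A2}, is not implied by them, and without it neither the nonnegativity of that integral, nor the deduction of a.e.\ convergence of the gradients, nor even the solvability of the approximate problems (which rests on pseudo-monotone operator theory) is available. Two further points are glossed over: the term $\iint (u_n)_t\,T_\delta(u_n-u)\,dx\,dt$ cannot be integrated by parts naively, since neither $(u_n)_t$ nor $u_t$ is a function near $t=0$ (the standard fix is a time regularization \`a la Landes); and the asserted ``uniform modulus of continuity in $C([0,T];L^\sigma(\Omega))$'' is not what Aubin--Lions provides and is precisely the kind of statement that needs proof (in the paper, time continuity of the constructed solution follows instead from the equivalence of formulations, \cref{defequiv} and \cref{equivren}). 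So your estimates are sound and parallel to the paper, but the existence part needs either an explicit monotonicity assumption together with a completed compactness argument, or, as the paper does, the reference to \cite{M}.
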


The above theorem is only a rough summary of the main results that we prove in this paper, but more detailed, and sometimes more general,  statements, are given  in the next sections. 

For example, we also provide with  uniqueness results in the class of weak solutions, at least when the second order operator is linear, namely    if $a(t,x,u,\xi)= A(t,x)\cdot Du$ for some coercive bounded matrix $A$. In that case we give  sufficient conditions   ensuring uniqueness  of the weak solution. Simple reference cases are  when $\xi\mapsto H(t,x,\xi)$ is a convex function (as in \rife{mod}), or even if it is  a $C^2$ function which is just convex at infinity.
\vskip1em

Let us conclude by explaining how the paper is organized. In \cref{finen} we discuss properties of solutions, and specifically  a priori estimates, for the case  when $2-\frac{N}{N+1}<q<2$. In \cref{deca} we prove the regularizing effects, as well as the short and long time estimates.  \cref{datil1} is devoted to the range $1<q<2-\frac{N}{N+1}$, in which   one can afford $L^1$-data (hence the a priori-estimates take a slightly different form). We briefly discuss the special cases $N=1$ and $N=2$ in \cref{N=}. Finally, \cref{uni} contains the uniqueness results, while \cref{optimalsec} shows the optimality of the class considered for the initial data.
\vskip1em

For the interested reader, let us stress that existence results for more general operators (including the $p$-Laplacian) and general right-hand sides can be found in \cite{M}. The present study of short and long time decay  can also be extended to the $p$-Laplace operator with similar techniques, but  a similar  development would have resulted in a too long exposition.

\subsection*{Notation}
We will represent the constant due to the Sobolev's (Poincar\'{e}'s) inequality by $c_S$ ($c_P$) while $c$ will stand for a positive constant which may vary from line to line during the proofs. Moreover, we define some auxiliary function  which will be used in the next Sections:
\be\label{gktk}
G_k(v)=(|v|-k)_+\text{sign}(v),\qquad T_k(v)=v-G_k(v)=\max\{-k,\min\{k,v\}\}.
\ee
%\begin{figure}[H]
%\centering
%\caption{The functions $G_k(v)$ and $T_k(v)$}\label{fig:GkTk}
%\begin{tikzpicture}
%\draw[->] (0,0) -- (3,0) node[anchor=north west] {v};
%\draw[->] (0,0) -- (0,2) node[left] {{\color{red!70!black}$T_k(v)$}, {\color{orange}$G_k(v)$}};
%\draw[<-] (-3,0) -- (0,0);
%\draw[<-] (0,-2) -- (0,0);
%% % % % % % % % % % % % % % % % % % % % % % % % %
%\draw[very thick, red!70!black] (-1,-1) -- (1,1);
%\draw[very thick, red!70!black] (1,1) -- (2,1);
%\draw[very thick, red!70!black] (-1,-1) -- (-2,-1);
%\draw[very thick, dashed, red!70!black] (2,1) -- (3,1);
%\draw[very thick, dashed, red!70!black] (-2,-1) -- (-3,-1);
%% % % % % % % % % % % % % % % % % % % % % % % % %
%\draw[very thick, orange] (-1,0) -- (1,0);
%\draw[very thick, orange] (1,0) -- (2,1);
%\draw[very thick, orange] (-1,0) -- (-2,-1);
%\draw[very thick, dashed, orange] (2,1) -- (3,2);
%\draw[very thick, dashed, orange] (-2,-1) -- (-3,-2);
%% % % % % % % % % % % % % % % % % % % % % % % % % 
%\draw[dashed] (1,1) -- (1,0);
%\draw[dashed] (-1,-1) -- (-1,0);
%\draw[dashed] (2,1) -- (2,0);
%\draw[dashed] (-2,-1) -- (-2,0);
%\draw[dashed] (1,1) -- (0,1);
%\draw[dashed] (-1,-1) -- (0,-1);
%% % % % % % % % % % % % % % % % % % % % % % % % % 
%\fill (1,0) circle (1.5pt) node[below] {$k$};
%\fill (2,0) circle (1.5pt) node[below] {$k+1$};
%\fill (-1,0) circle (1.5pt) node[above] {$-k$};
%\fill (-2,0) circle (1.5pt) node[above] {$-(k+1)$};
%\fill (0,1) circle (1.5pt) node[left] {$k$};
%\fill (0,-1) circle (1.5pt) node[right] {$-k$};
%\end{tikzpicture}
%\end{figure}

\section{The  case $2-\frac{N}{N+1}< q<2$. Estimates in $\elle\sigma$.}\label{finen} 

In this Section we assume that, accordingly to the growth of the function $H(t,x,Du)$ (see \rife{H}), we have $2-\frac{N}{N+1}< q<2$ and $u_0$ satisfies \rife{ID1}. The value $\sigma$ is defined here as 
$$
\sigma= \frac{N(q-1)}{2-q}\,.
$$

\subsection{Notions of solution}\label{notions}

Different notions of solutions could be suggested for problem   \eqref{P}.  We will currently work with the following one, in which we require  solutions to be in the standard energy space for $t>0$, while the only {\it global} information is that $u\in \continue {\si}$.

\begin{definition}\label{def1} A  function $u$ is a solution to \rife{P} if $u\in \continue {\si} \cap \paraccaloc$, $u(0)= u_0$ and $u$ satisfies
\begin{equation*}
  \iint_{Q_T} - u\varphi_t+ a(t,x,u,D u)\cdot  D \varphi\,dx\,dt =  \iint_{Q_T} H(t,x, D u)\varphi\,dx\,dt
\end{equation*}
for every $\varphi\in C^\infty_c(Q_T)$.
\end{definition}

Let us soon point out that other notions of solutions are equally interesting. In particular, it could be meaningful to require some energy  to be finite in the whole cylinder $Q_T$, while no explicit time continuity (nor boundedness) being  a  priori required. In this case, it turns out that the natural energy space consists of functions $u$ such that 
$(1+|u|)^{\frac{\si}{2}-1}u \in L^2(0,T;H_0^1(\Omega))$. Therefore, a  second possible notion of solution is the following one.

\begin{definition}\label{def2}
A function $u$ is a solution of \eqref{P} if
\begin{equation}\label{beta}\tag{RC}
(1+|u|)^{\frac{\si}{2}-1}u \in L^2(0,T;H_0^1(\Omega))\quad\text{for}\quad\si=\frac{N(q-1)}{2-q}\,,
\end{equation}
if $H(t,x, D u)\in L^1(Q_T)$ and $u$ satisfies the weak formulation
\[
-\int_{\Omega}u_0\varphi(0)\,dx+\iint_{Q_T}[-\varphi_t u +a(t,x,u, D u)\cdot  D \varphi]\,dx\,dt =\iint_{Q_T}H(t,x, D u)\varphi \,dx\,dt
\]
for every   $\vp\in C^\infty_c([0,T)\times \Omega)$.
\end{definition}

\begin{remark}\label{rmkct}
We notice that, if $\sigma\geq 2$, condition \rife{beta} implies $ u\in L^2(0,T; H_0^1(\Omega)) $. In this case, since $q<2$, we already have from \rife{H} that $H(t,x,Du)\in  L^1(Q_T)$. Moreover, if \rife{A2} holds, we deduce from the distributional formulation    that
\[
u_t=\text{div}(a(t,x,u, D u))+H(t,x, D u)\in L^2(0,T;H^{-1}(\Omega))+L^1(Q_T).
\]
Thus, thanks to \cite[Theorem $1.1$]{P1}, we can say that $u$ belongs, at least, to $C^0([0,T];L^1(\Omega))$.
\end{remark}

It is interesting to notice that the above two notions coincide, at least if $\si\geq 2$, in which case \rife{beta} implies that solutions  belong to $L^2(0,T;H_0^1(\Omega))$ (this range of $\si$ corresponds to $2-\frac{N}{N+2}\le q<2$). This is the content of the next result. 

\begin{proposition}\label{defequiv} Assume that \rife{A1}-\rife{A2} and \rife{H} hold and that $2-\frac{N}{N+2}\le q<2$ (i.e. $\si\geq 2$). Then  \cref{def1} and \cref{def2} are equivalent. 
\end{proposition}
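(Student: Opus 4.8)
The plan is to prove the two inclusions separately, using as a bridge the observation made in \cref{rmkct}: when $\sigma\ge2$, condition \rife{beta} forces any \cref{def2} solution to belong to $L^2(0,T;H^1_0(\Omega))$ with $H(t,x,Du)\in L^1(Q_T)$ and $u_t\in L^2(0,T;H^{-1}(\Omega))+L^1(Q_T)$, hence to $C^0([0,T];\elle1)$. Conversely, a \cref{def1} solution already carries the global continuity $u\in\continue\sigma$ together with $u\in\paraccaloc$. Thus in each direction only one genuinely new piece of information must be produced, and it then remains to match the distributional identity across the two test-function classes.

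\emph{From \cref{def2} to \cref{def1}.} Since $C^\infty_c(Q_T)\subset C^\infty_c([0,T)\times\Omega)$ and every $\varphi$ in the smaller class vanishes near $t=0$, restricting the weak formulation of \cref{def2} immediately yields the distributional identity of \cref{def1}; moreover $u\in L^2(0,T;H^1_0(\Omega))\subset\paraccaloc$. The substantial point is to upgrade the continuity $u\in C^0([0,T];\elle1)$ coming from \cref{rmkct} to $u\in\continue\sigma$, and to identify $u(0)=u_0$. The identification is obtained by integrating by parts in time in the weak formulation against $\varphi(t,x)=\theta_\tau(t)\psi(x)$, $\psi\in C^\infty_c(\Omega)$, and letting $\tau\to0$, which gives $\into(u(0)-u_0)\psi\,dx=0$ for all $\psi$. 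To promote the continuity I would use the primitive $\Phi$ with $\Phi'(s)=|s|^{\sigma-2}s$ (so $\into\Phi(u)=\frac1\sigma\|u\|_{\elle\sigma}^\sigma$): testing the equation with $\Phi'(T_k(u))$, which is an admissible $H^1_0\cap L^\infty$ test function, and passing to the limit in $k$ via \rife{beta}, one shows that $t\mapsto\into\Phi(u(t))\,dx$ is absolutely continuous. Combined with the weak continuity and the uniform convexity of $\elle\sigma$ (recall $\sigma\ge2$), this forces strong continuity in $\elle\sigma$.

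\emph{From \cref{def1} to \cref{def2}.} Here I would first establish the global energy bound \rife{beta}. Starting from the $\elle\sigma$-estimate of this section, I test the equation on a subcylinder $(\tau,T)\times\Omega$ with $(1+|u|)^{\sigma-1}\text{sign}(u)$ (again regularised by truncation), use the coercivity \rife{A1} to control $\alpha\iint|D[(1+|u|)^{\frac\sigma2-1}u]|^2$, and absorb the superlinear contribution of $H$ through the Sobolev embedding, exactly as in the a priori estimates of \cref{finen}. Letting $\tau\to0$ and using $u(\tau)\to u_0$ in $\elle\sigma$ (available from \cref{def1}) gives \rife{beta} up to $t=0$; the same computation yields $\iint_{Q_T}|Du|^q\,dx\,dt<\infty$, whence $H(t,x,Du)\in L^1(Q_T)$ by \rife{H}. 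It remains to pass from $C^\infty_c(Q_T)$ to $C^\infty_c([0,T)\times\Omega)$: for such a $\varphi$ I apply the \cref{def1} identity to $\theta_\tau\varphi$ and send $\tau\to0$. The only non-vanishing extra contribution, $-\int_0^T\theta_\tau'(t)\into u\varphi\,dx\,dt$, converges to $-\into u_0\varphi(0)\,dx$ precisely because $u\in\continue\sigma$ with $u(0)=u_0$, producing the initial-datum term of \cref{def2}.

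\emph{Main obstacle.} The delicate step is the first direction, where strong $\elle\sigma$ time-continuity must be derived from the sole global energy \rife{beta} and the low regularity $u_t\in L^2(0,T;H^{-1}(\Omega))+L^1(Q_T)$. Justifying the chain-rule identity for $t\mapsto\into\Phi(u(t))\,dx$ requires a careful truncation-and-limit argument, since $\Phi'(u)$ itself need not be an admissible test function; the assumption $\sigma\ge2$ is exactly what makes the energy space sit inside $L^2(0,T;H^1_0(\Omega))$ and renders this passage feasible. Ruling out energy jumps, i.e. upgrading the weak $\elle1$-continuity to strong $\elle\sigma$-continuity, is the technical core of the equivalence.
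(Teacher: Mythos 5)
Your overall architecture is sound, and your first direction (\cref{def1} $\Rightarrow$ \cref{def2}) coincides with the paper's proof: the global bound \rife{beta} is exactly Step 1 of \cref{lemGk}, after which extending the weak formulation up to $t=0$ is standard. In the converse direction your concluding mechanism is genuinely different from the paper's: you propose to upgrade $\continue{1}$ to $\continue{\si}$ via continuity of $t\mapsto\|u(t)\|_{\elle{\si}}$ combined with weak continuity and uniform convexity (Radon--Riesz), whereas the paper proves the uniform-in-time tail estimate \rife{gk0}, namely $\sup_t\into|G_k(u(t))|^{\si}\,dx\to0$ as $k\to\infty$, and concludes by Vitali's theorem. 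Either mechanism closes the argument \emph{once the requisite a priori information is available}.

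However, as written there is a genuine gap exactly at your key step, ``passing to the limit in $k$ via \rife{beta}''. Both that limit passage and your appeal to weak $\elle{\si}$-continuity presuppose $u\in\limitate{\si}$, and this does \emph{not} follow from \rife{beta}: condition \rife{beta} is a space-time bound ($L^2$ in time of an $H^1_0$-norm) and gives no uniform-in-time control of $\|u(t)\|_{\elle{\si}}$. Concretely, to dominate the Hamiltonian term you need $|Du|^q|T_k(u)|^{\si-1}\le\gamma|Du|^q|u|^{\si-1}$ to lie in $L^1(Q_T)$; by Young's inequality this reduces to $|D[(1+|u|)^{\frac{\si}{2}-1}u]|^2+|u|^{\si\frac{N+2}{N}}\in L^1(Q_T)$, and the second summand requires, via Gagliardo--Nirenberg interpolation, both \rife{beta} \emph{and} the missing bound $u\in\limitate{\si}$ --- a circularity. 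This is precisely the point the paper settles first: it tests with the doubly truncated function $|T_n(G_k(u))|^{\si-2}T_n(G_k(u))$, estimates the right-hand side by H\"older with exponents $\bigl(\tfrac2q,\tfrac{2^*}{2-q},\tfrac{N}{2-q}\bigr)$ and Sobolev so that the gradient factor appears with total power one, and is left with the factor $\bigl(\sup_t\into|T_n(G_k(u(t)))|^{\si}\,dx\bigr)^{\frac{2-q}{N}}$, whose exponent is strictly less than $1$; since this truncated quantity is finite for each fixed $n$, the resulting self-improving inequality can be absorbed, giving a bound uniform in $n$ and hence $u\in\limitate{\si}$. Once this absorption step is inserted, your dominated-convergence passage, the weak continuity, and the Radon--Riesz conclusion all go through (as does the paper's Vitali argument).
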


The proof of \cref{defequiv} is a bit technical and we postpone it to the \cref{app}, although several ingredients will be already contained in the next sections.

In the case that $1 < \si <2 $,  which corresponds to $2-\frac N{N+1} < q < 2-\frac{N}{N+2}$, the regularity class \rife{beta} does not allow us to consider finite energy solutions any more. However, a similar equivalent formulation holds if we complement \cref{def2} by requiring that  a standard chain rule argument is allowed. This usually goes under the name of {\it renormalized formulation}, in which the truncations of the solution - if not the solution itself - are required to be in the energy space, see \cite{BDGM,BlMu,DMOP,Mu}.

We recall from \cite{BBGGPV} that the set of functions $\mathcal{T}^{1,2}_0(Q_T)$ collects all the measurable functions $u:Q_T\to \mathbb{R}$ almost everywhere finite and such that the truncated functions $T_k(u)$ belong to $ L^2(0,T;H_0^1(\Omega))$ for all $k>0$. The generalized gradient of a function $u$ in $\mathcal{T}^{1,2}_0(Q_T)$ is defined as 
\[
D T_k(u)=D u \chi_{\{|u|< k\}}.
\]

 \begin{definition}\label{defrin}
We say that a function $u\in \mathcal{T}^{1,2}_0(Q_T)\cap \parelle1$ is a renormalized solution of \eqref{P} if $u$ satisfies \rife{beta}, 
\begin{subequations}
\def\@currentlabel{RS}
\makeatother
\label{RS}
\renewcommand{\theequation}{RS\arabic{equation}}
\begin{equation}
 H(t,x, D u)\in L^1(Q_T),\label{sr1} 
\end{equation}
and if the following renormalized equation holds:
\begin{equation}\label{sr2}
\begin{array}{c}
\ds
-\integrale S(u_0)\vp (0,x)\,dx+\iint_{Q_T}-S(u)\vp_t+a(t,x,u,D u)\cdot D (S'(u)\vp)\,dx\,dt\\
[3mm]\ds
=\iint_{Q_T}H(t,x,D u)S'(u)\vp\,dx\,dt
\end{array}
\end{equation}
%\begin{equation}
%S(u\bigl|_{t=0})=S(u_0),\label{sr3} 
%\end{equation}
\end{subequations}
for every $S\in W^{2,\infty}(\mathbb{R})$ such that $S'(\cdot)$ has compact support and any  $\vp\in C^\infty(\overline Q_T)$ such that  $\vp(T,x)=0$ and $S'(u)\vp \in L^2(0,T;H_0^1(\Omega))$ (i.e., the product $S'(u)\vp$ is equal to zero on the  boundary $(0,T)\times \partial \Omega$).
\end{definition}

%We refer, for instance, to \cite{BBGGPV} for such a definition of generalized gradient.

\begin{remark}
We observe that, differently than in the usual notions of renormalized solution (\cite{BlMu,DMOP}),  no asymptotic condition on the energy term appears in \cref{defrin}. In fact, it is the regularity assumption \rife{beta}  which, in particular, ensures that
\[
\lim_{n\to\infty}\frac{1}{n}\iint_{\{n<|u|<2n\}}a(t,x,u,D u)\cdot D u\,dx\,dt\underset{n\to\infty}{\longrightarrow}0.
\]
As is well known, the above asymptotic condition is  a crucial information for the renormalized formulation and, in particular, it implies that  a standard distributional formulation is also verified.
\end{remark}

We show in the next result that the above renormalized formulation is also equivalent to \cref{def1}. This holds true for every value of $\si>1$ although the interesting case is when $\si<2$, since if $\si\geq 2$ \cref{defrin} is equivalent  to \cref{def2} and \cref{defequiv} applies.

\begin{proposition}\label{equivren}  Assume that \rife{A1}-\rife{A2} and \rife{H} hold  and that $2-\frac{N}{N+1}< q<2$. Then  \cref{def1,defrin} are equivalent.
\end{proposition}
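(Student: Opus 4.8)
The plan is to prove the two implications separately, using the global energy information \rife{beta} as the bridge between the two formulations. The core analytic fact behind everything is that, since $q<2$ and $\si=\frac{N(q-1)}{2-q}$, the superlinear term $H(t,x,Du)$ can be absorbed into the coercive second order term; this is the same absorption estimate that underlies the a priori bounds of \cref{finen}, and I would reuse it throughout.

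First I would show that \cref{def1} implies \cref{defrin}. Starting from a solution $u\in\continue\si\cap\paraccaloc$, the first task is to upgrade the merely local energy to the global bound \rife{beta}. To this end I would test the equation on a cylinder $(\tau,t)\times\Omega$ with a (regularized, truncated) power $[(1+|u|)^{\si-1}-1]\text{sign}(u)$, use the coercivity \rife{A1} to produce $\iint (1+|u|)^{\si-2}|Du|^2$, bound the right-hand side through \rife{H} and a Gagliardo--Nirenberg interpolation, and finally let $\tau\to0$ exploiting the continuity $u\in\continue\si$ together with $u(\tau)\to u_0$ in $\elle\si$; this yields \rife{beta}. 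From \rife{beta} and $u\in L^\infty(0,T;\elle\si)$ the same interpolation gives $Du\in\parelle q$ --- this is exactly where the threshold $q>2-\frac N{N+1}$ is used --- hence \rife{sr1}. Membership in $\mathcal{T}^{1,2}_0(Q_T)$ follows because \rife{beta} controls $DT_k(u)$ in $\parelle2$ for each $k$. It then remains to derive the renormalized equation \rife{sr2}: I would first extend the distributional formulation from $C^\infty_c(Q_T)$ to test functions $\vp\in C^\infty(\overline{Q}_T)$ with $\vp(T,\cdot)=0$, producing the initial term $-\integrale u_0\vp(0,x)\,dx$ via the time continuity and $u(0)=u_0$, and then insert $S'(u)\vp$ for $S\in W^{2,\infty}(\R)$ with $S'$ compactly supported. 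The only delicate point is the chain rule in time, $\langle u_t,S'(u)\vp\rangle=\langle S(u)_t,\vp\rangle$, which is legitimate because the equation together with \rife{A2} and \rife{sr1} gives $u_t\in L^2(0,T;H^{-1}(\Omega))+\parelle1$ while $S(u)$ is bounded with $DS(u)=S'(u)Du\in\parelle2$; a standard Landes-type time regularization makes this rigorous.

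For the converse, from a renormalized solution I would first recover the distributional formulation of \cref{def1} by choosing $S=S_n$ with $S_n'\to1$ and $S_n'$ supported in $[-2n,2n]$: in \rife{sr2} the extra term $\iint S_n''(u)\vp\,a(t,x,u,Du)\cdot Du$ is supported on $\{n<|u|<2n\}$ and vanishes as $n\to\infty$ by the asymptotic energy condition recorded in the Remark, which is itself a consequence of \rife{beta}. Next I would obtain $u\in\paraccaloc$: if $\si\geq2$ this is immediate from \rife{beta} (as in \cref{rmkct}), while for $1<\si<2$ I would invoke the regularizing effect \rife{short2}, which makes $u$ bounded on every $(\tau,T)\times\Omega$, so that $(1+|u|)^{\frac\si2-1}$ is bounded below there and \rife{beta} upgrades to $Du\in\parelle2$ locally in time. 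The remaining, and most delicate, step is the global continuity $u\in\continue\si$: the renormalized framework provides $T_k(u)\in\continue2$ with the correct initial trace, and I would promote this to continuity in $\elle\si$ by controlling the tails, showing $\|G_k(u)\|_{L^\infty(0,T;\elle\si)}\to0$ as $k\to\infty$ uniformly in time --- again through the absorption estimate applied to $G_k(u)$ --- and then passing to the uniform limit in $u=T_k(u)+G_k(u)$.

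The main obstacle, appearing in both directions, is this tail/continuity control together with the accompanying global energy estimate: both reduce to the single nonlinear absorption inequality for $|Du|^q$ with $q<2$ at the critical exponent $\si$. The renormalization bookkeeping --- the vanishing of the energy on $\{n<|u|<2n\}$ and the chain rule in time --- is by contrast routine once \rife{beta} is in hand, precisely as the Remark anticipates.
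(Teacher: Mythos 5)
Your first implication (\cref{def1} $\Rightarrow$ \cref{defrin}) follows essentially the paper's route: the global bound \rife{beta} is obtained by testing with regularized powers of $u$ (in the paper this is exactly Step 1 of \cref{lemGk}), then Young's inequality and interpolation give $|Du|\in\parelle q$, hence \rife{sr1}, and the renormalized identity is written on $(\tau,T)$ and extended to $\tau=0$. Likewise, your tail-control argument for the continuity $u\in\continue\si$ in the converse direction is the paper's argument: an inequality of the type \rife{gk0} whose right-hand side vanishes as $k\to\infty$ thanks to \rife{beta}, followed by equi-integrability and Vitali's theorem.

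The gap is in the converse direction, at the step where you claim $u\in\paraccaloc$ for $1<\si<2$. You propose to invoke the regularizing effect \rife{short2}, but that estimate (\cref{bd}, which builds on \cref{Prop2r}, \cref{propW11} and \cref{propint}) is established in the paper only for solutions in the sense of \cref{def1}, i.e.\ for functions \emph{already known} to belong to $\continue\si\cap\paraccaloc$. A renormalized solution is not yet known to have locally finite energy --- that is precisely what this step must produce --- so the invocation is circular. The difficulty is genuine, not formal: when $\si<2$, the bound \rife{beta} only controls the degenerate quantity $|Du|^2(1+|u|)^{\si-2}$, whose weight decays as $|u|\to\infty$, so there is no soft way to upgrade it to $Du\in L^2_{loc}$ in time without first improving the integrability of $u$ at positive times. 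The paper fills exactly this hole with a dedicated argument, \cref{renreg}: it reruns the regularizing machinery \emph{inside} the renormalized formulation, using admissible test functions $S'(\eta)=\int_0^\eta(\vep+|\xi|)^{\sigma-2}(\vep+|T_n(\xi)|)^{r-\sigma}\,d\xi$ (made rigorous via the cut-offs $\theta_n$), derives a differential inequality for $\into S(G_k(u))\,dx$, and obtains both the $L^r$ smoothing for $r\le 2$ and, taking $r=2$, the membership $u\in\paraccaloc$. This is the very ``bookkeeping'' your proposal dismisses as routine; in fact it is the main new content needed to close the equivalence in the interesting range $1<\si<2$ (for $\si\ge2$ the statement is already covered by \cref{defequiv}, as both you and the paper note).
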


We also postpone the proof of this result to the \cref{app}. Let us recall that the existence of at least one solution satisfying \rife{def2} (if $2-\frac{N}{N+2}\leq  q<2$) or \rife{defrin} (if $2-\frac{N}{N+1}< q<2-\frac{N}{N+2}$) is proved in \cite{M}.
 
%
%Before we give the proof, let us observe that, by density arguments, the test functions
%$\vfi$ are allowed to be less regular, in particular we can use any $\vp$ such that $\vp\in L^2(0,T;H_0^1(\Omega))\cap L^{\infty}(Q_T)$ and  $\varphi_t \in L^2(Q_T)$.
%We also present here a useful result concerning integration by parts which is aimed to justify our later choices of test functions. We will omit to quote this result during the proofs presented in the following Sections. 
%
%\begin{prop}\label{propint}
%Let $u$ be a solution of \eqref{P}. Then
%\begin{multline*}
%\int_{\Omega}\Psi(u(t))\,dx+\iint_{Q_t}a(s,x,u, D u)\cdot  D u\psi'(u)\,dx\,ds\\=\iint_{Q_t} H(s,x, D u)\psi(u) \,dx\,ds+\int_{\Omega}\Psi(u_0)\,dx \qquad \text{a.e.}\quad t\in(0,T),
%\end{multline*}
%for every $\psi\in W^{1,\infty}(\mathbb{R})$ such that $\psi(0)=0$, where $\Psi(v)=\int_0^v \psi (w)\,dw$.
%\end{prop}
%\proof
%This Proposition is a direct consequence of \cite[Lemma $4.6$]{PPP}. See also \cite[Corollary $4.7$ and Remark $4.2$]{PPP}.
%\endproof

Eventually, similar notions of {\it subsolution} (or \emph{supersolution}) could be defined with the obvious variations of  the above definitions. With   similar proofs as before, it follows that for a function $u\in \continue{\si}$ the different notions of subsolution coincide.

\subsection{A priori estimates and contraction principles.}\label{contraction}

In this section we give evidence to the crucial role played by the space $\elle{\si}$ in the study of \rife{P}. We start from a fundamental contraction principle which holds in this space.

In the following, we simply refer to  a solution of \rife{P} in the sense of \cref{def1}. 
Let us first observe  that a standard chain rule and integration by parts  formula hold for those solutions, namely we have the following

\begin{proposition}\label{propint}
Let $u$ be a solution of \eqref{P}. Then
\begin{equation*}
\begin{array}{c}
\ds
\int_{\Omega}\Psi(u(t_2))\,dx+\int_{t_1}^{t_2}\into a(s,x,u, D u)\cdot  D u\,\psi'(u)\,dx\,ds\\
[3mm]\ds
=\int_{t_1}^{t_2}\into H(s,x, D u)\psi(u) \,dx\,ds+\int_{\Omega}\Psi(u(t_1))\,dx \qquad \text{for every }\quad t_1, t_2\in(0,T),
\end{array}
\end{equation*}
for every $\psi\in W^{1,\infty}(\mathbb{R})$ such that $\psi(0)=0$, where $\Psi(v)=\int_0^v \psi (w)\,dw$.
\end{proposition}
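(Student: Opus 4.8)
\emph{Proof sketch.} The plan is to test the equation of \cref{def1} with $\psi(u)$, cut off in time around $[t_1,t_2]$, and to justify the resulting chain rule for the time derivative by a regularization argument. First I would record the regularity making every term meaningful. Since $\psi$ is Lipschitz with $\psi(0)=0$, the function $\psi(u)$ belongs to $\paraccaloc\cap L^\infty(Q_T)$ with $D\psi(u)=\psi'(u)\,Du$; moreover $\Psi$ has at most linear growth, $|\Psi(v)|\le\|\psi\|_\infty|v|$, so that, using $u\in\continue\sigma$, the map $t\mapsto\into\Psi(u(t))\,dx$ is continuous on $[0,T]$. Fixing $0<\tau_0<t_1<t_2<\tau_1<T$, the inclusion $u\in L^2(\tau_0,\tau_1;H^1_0(\Omega))$ together with \rife{A2} gives $a(t,x,u,Du)\in L^2((\tau_0,\tau_1)\times\Omega)$, while \rife{H} and $q<2$ give $H(t,x,Du)\in L^1((\tau_0,\tau_1)\times\Omega)$; hence $a\cdot Du\,\psi'(u)$ and $H\psi(u)$ are integrable on $(t_1,t_2)\times\Omega$ and the claimed identity makes sense. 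Rewriting the equation as $u_t=\dive\big(a(t,x,u,Du)\big)+H(t,x,Du)$, one sees $u_t\in L^2(\tau_0,\tau_1;H^{-1}(\Omega))+L^1((\tau_0,\tau_1)\times\Omega)$.

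Next I would prove the identity first for $\psi\in C^1(\mathbb R)\cap W^{1,\infty}(\mathbb R)$ with $\psi(0)=0$. Introducing the time regularizations (Steklov averages) $u_h$ of $u$ on $(\tau_0,\tau_1)$, the distributional formulation yields $\partial_t u_h=\dive(a_h)+H_h$ as an identity in $L^2(H^{-1})+L^1$, where $a_h,H_h$ denote the corresponding averages. Since $t\mapsto u_h(t)$ is Lipschitz with values in $\elle2$, the elementary pointwise chain rule $\partial_t\Psi(u_h)=\psi(u_h)\,\partial_t u_h$ holds; integrating over $(t_1,t_2)\times\Omega$ gives
\[
\into\Psi(u_h(t_2))\,dx-\into\Psi(u_h(t_1))\,dx+\int_{t_1}^{t_2}\into a_h\cdot D\psi(u_h)\,dx\,dt=\int_{t_1}^{t_2}\into H_h\,\psi(u_h)\,dx\,dt .
\]
Letting $h\to0$, I would use that $u_h\to u$ in $L^2(\tau_0,\tau_1;H^1_0(\Omega))$ (hence $Du_h\to Du$ and, up to subsequences, $u_h\to u$ a.e.), that $a_h\to a$ in $L^2$ and $H_h\to H$ in $L^1$ on $(\tau_0,\tau_1)\times\Omega$, and that $u_h(t_i)\to u(t_i)$ in $\elle\sigma$. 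Continuity of $\psi$ and $\psi'$ then lets each term pass to the limit (dominated convergence for the two boundary terms and the $H$-term; strong $L^2$ convergence $a_h\to a$ together with $\psi'(u_h)Du_h\to\psi'(u)Du$ in $L^2$ for the coercivity term), yielding the identity for a.e.\ $t_1,t_2$, and then for all $t_1,t_2$ by continuity of both sides in the endpoints.

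Finally I would remove the $C^1$ restriction. Given $\psi\in W^{1,\infty}(\mathbb R)$ with $\psi(0)=0$, set $\psi_n=\psi*\rho_n$ (normalized so that $\psi_n(0)=0$), so that $\psi_n\to\psi$ uniformly, $\|\psi_n'\|_\infty\le\|\psi'\|_\infty$, and $\psi_n'\to\psi'$ outside a null set $Z\subset\mathbb R$. Writing the already-proven identity for each $\psi_n$, the two boundary terms and the $H$-term converge by dominated convergence (linear growth of $\Psi_n$ and $u(t_i)\in\elle1$; boundedness of $\psi_n$ and $H\in L^1$). For the coercivity term I would invoke the classical property that $Du=0$ almost everywhere on the set $\{(t,x):u(t,x)\in Z\}$; hence $a\cdot Du\,\psi_n'(u)\to a\cdot Du\,\psi'(u)$ pointwise a.e.\ and, being dominated by $\|\psi'\|_\infty\,|a\cdot Du|\in L^1$, converges in $L^1$. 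This establishes the identity for every admissible $\psi$.

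The main obstacle is the rigorous treatment of the time-derivative term: because $u_t$ only lives in $L^2(H^{-1})+L^1$, testing by $\psi(u)$ cannot be performed directly, and it is the time regularization --- together with the simultaneous control of the duality pairing $\langle\dive(a_h),\psi(u_h)\rangle$ and the $L^1$ pairing $\int H_h\psi(u_h)$ --- that makes the boundary terms $\into\Psi(u(t_i))$ appear legitimately. The secondary subtlety, namely passing to a non-$C^1$ nonlinearity $\psi$ in the coercivity term, is resolved by the vanishing of $Du$ on level sets of measure zero. \finedimo
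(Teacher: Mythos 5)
Your proof is correct, and it takes a genuinely different route from the paper's. The paper disposes of \cref{propint} by citation: when $\psi'$ has compact support the formula is quoted from the renormalized--solutions literature (\cite[Lemma 7.1]{DP}, or \cite[Corollary 4.7 and Remark 4.2]{PPP}), and the general case $\psi\in W^{1,\infty}(\R)$ is then obtained by applying that result to $\psi(T_n(\cdot))$ and letting $n\to\infty$, which is licit precisely because $u\in \continue{\si}\cap\paraccaloc$. You instead prove everything from scratch: Steklov time-averaging to justify the chain rule when $u_t$ only lies in $L^2(0,T;H^{-1}(\Omega))+L^1(Q_T)$, followed by mollification of $\psi$, with the Stampacchia property ($Du=0$ a.e.\ on the preimage of a null set) handling the a.e.\ discontinuities of $\psi'$ in the energy term. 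This is essentially the technique hidden inside the cited lemmas, so the two arguments are compatible; yours buys self-containedness and makes the delicate pairings explicit, while the paper's buys brevity by outsourcing exactly those points. Two details you should still spell out if this were written in full: (i) the formulation in \cref{def1} is stated only for $\varphi\in C^\infty_c(Q_T)$, so testing with $\psi(u_h)$, which vanishes on $\partial\Omega$ only in the trace sense, requires the observation that for a.e.\ $t$ one has $\dive(a_h(t))=\partial_t u_h(t)-H_h(t)\in L^1(\Omega)$, whence the integration by parts extends to $H^1_0(\Omega)\cap L^\infty(\Omega)$ test functions by density; (ii) since $u\in\continue{\si}$ gives $u_h(t_i)\to u(t_i)$ in $\elle{\si}$ at \emph{every} fixed time, your identity holds directly for all $t_1,t_2\in(0,T)$, and the closing ``a.e.\ endpoints plus continuity'' step is superfluous.
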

\proof
Whenever $\psi'(\cdot)$ has compact support, this Proposition is contained e.g. in \cite[Lemma 7.1]{DP} (or in \cite[Corollary $4.7$ and Remark $4.2$]{PPP}). The conclusion  immediately extends to $\psi\in W^{1,\infty}(\mathbb{R})$ because $u\in \continue {\si}\cap \paraccaloc $.  \endproof

Now we prove the main  step upon which all a priori estimates will rely. Recall that the function $G_k(s\cdot)$ is defined in \rife{gktk}.

\begin{lemma}\label{lemGk}
Assume \eqref{A1},   \eqref{H} with $2-\frac{N}{N+1}< q<2$ and \eqref{ID1}. Let $u$ be a solution of \eqref{P}. Then,  there exists a positive value $\delta_0$, only depending on $\alpha$, $c_S$, $q$ and $\gamma$, such that, for every $k>0$ and for every $\delta<\delta_0$ satisfying
\[
\|G_k(u_0)\|_{L^{\si}(\Omega)}^{\si}<\delta
\]
we have
\begin{equation}\label{lemgk}
\|G_k(u(t))\|_{L^{\si}(\Omega)}^{\si}<\delta\quad \forall\, t\in[0,T].
\end{equation}
\end{lemma}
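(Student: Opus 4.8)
The plan is to convert the statement into a propagation-of-smallness property for the scalar quantity
\[
Y(t):=\|G_k(u(t))\|_{\elle\si}^\si=\into |G_k(u(t))|^\si\,dx,
\]
which is continuous on $[0,T]$ because $u\in\continue\si$ and $G_k$ is $1$-Lipschitz. First I would apply the chain rule of \cref{propint} with the choice $\psi(v)=|G_k(v)|^{\si-1}\,\mathrm{sign}(v)$, so that $\Psi(v)=\tfrac1\si|G_k(v)|^\si$; this is done after a routine truncation making $\psi$ admissible (it is $W^{1,\infty}$ only after truncation, being merely H\"older near $|v|=k$ when $\si<2$ and superlinear at infinity), which is legitimate thanks to the finite-energy regularity for $t>0$ and to $u\in\continue\si$. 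Since $\psi'(v)=(\si-1)|G_k(v)|^{\si-2}\chi_{\{|v|>k\}}$ and $Du\,\chi_{\{|u|>k\}}=DG_k(u)$, the right auxiliary unknown is $w:=|G_k(u)|^{\si/2}\,\mathrm{sign}(u)$, for which $\|w(t)\|_{\elle2}^2=Y(t)$ and $|Dw|^2=\tfrac{\si^2}{4}|G_k(u)|^{\si-2}|DG_k(u)|^2$.

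Using the coercivity \rife{A1} to bound the dissipation from below and \rife{H} to bound the first-order term, \cref{propint} gives, for $0<t_1<t_2<T$,
\[
\tfrac1\si Y(t_2)+\tfrac{4\al(\si-1)}{\si^2}\int_{t_1}^{t_2}\into|Dw|^2\,dx\,dt\le \ga\int_{t_1}^{t_2}\into |DG_k(u)|^q|G_k(u)|^{\si-1}\,dx\,dt+\tfrac1\si Y(t_1).
\]
The crux is the estimate of the $H$-term. Rewriting it through $w$ and using that $\si=\tfrac{N(q-1)}{2-q}$ is precisely the critical exponent, a direct computation yields $|DG_k(u)|^q|G_k(u)|^{\si-1}=(\tfrac2\si)^q|w|^{(2-q)\frac{N+2}{N}}|Dw|^q$. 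Applying H\"older in $(t,x)$ with exponents $\tfrac2q$ and $\tfrac2{2-q}$ makes the power $2+\tfrac4N$ appear on $w$, which is exactly the parabolic Sobolev exponent; hence Gagliardo--Nirenberg (with constant $c_S$) together with the energy control gives
\[
\int_{t_1}^{t_2}\into |w|^{(2-q)\frac{N+2}{N}}|Dw|^q\,dx\,dt\le c\,\Bigl(\sup_{[t_1,t_2]}Y\Bigr)^{\frac{2-q}{N}}\int_{t_1}^{t_2}\into|Dw|^2\,dx\,dt,
\]
where the exponents of $\int\int|Dw|^2$ add up exactly to $1$.

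Letting $t_1\to0^+$ (so that $Y(t_1)\to Y(0)=\|G_k(u_0)\|_{\elle\si}^\si$ by continuity) and writing $M_t:=\sup_{[0,t]}Y$, the two displays combine into
\[
\tfrac1\si Y(t)+\tfrac{4\al(\si-1)}{\si^2}\int_0^t\into|Dw|^2\,dx\,ds\le C'\,M_t^{\frac{2-q}{N}}\int_0^t\into|Dw|^2\,dx\,ds+\tfrac1\si Y(0),
\]
with $C'$ depending only on $\al,c_S,q,\ga$ (and on the fixed $N$). I would then fix $\delta_0$ by $C'\delta_0^{(2-q)/N}=\tfrac{4\al(\si-1)}{\si^2}$, so that whenever $M_t\le\delta_0$ the two gradient terms cancel and one is left with $Y(t)\le Y(0)$. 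The conclusion comes from a continuity/bootstrap argument: assuming $Y(0)<\delta<\delta_0$ and supposing, for contradiction, that $t^\ast$ is the first time with $Y(t^\ast)=\delta$, one has $M_{t^\ast}=\delta<\delta_0$ on $[0,t^\ast]$, the absorption applies, and we reach the contradiction $Y(t^\ast)\le Y(0)<\delta$. Hence $Y(t)<\delta$ throughout $[0,T]$.

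The main obstacle is twofold. First, the admissibility of $\psi$ in \cref{propint}: it is non-Lipschitz near $|v|=k$ when $\si<2$ and superlinear at infinity, so it must be approximated, and the singular/degenerate dissipation $\int|G_k(u)|^{\si-2}|DG_k(u)|^2$ has to be interpreted via $w\in L^2_{loc}((0,T);H^1_0(\Omega))$. Second, and more essential, the structure of the absorption: the supremum $M_t$ appears on the right-hand side, so the smallness cannot be closed by a plain Gronwall estimate but must be propagated through the continuity bootstrap above, which is exactly where the criticality of $\si$ and the smallness threshold $\delta_0$ are used.
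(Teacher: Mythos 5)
Your overall strategy coincides with the paper's own proof: the same test function (equivalently the auxiliary unknown $w=|G_k(u)|^{\si/2}\mathrm{sign}(u)$), the same use of the critical value of $\si$ to close a H\"older--Sobolev estimate with the dissipation appearing at total power one, the same threshold $\delta_0$, and the same continuity argument (your first hitting time $t^\ast$ is exactly the paper's $T^\ast$). Your exponent computations are correct, and replacing the paper's spatial H\"older with indices $\left(\frac2q,\frac{2^*}{2-q},\frac{N}{2-q}\right)$ plus Sobolev pointwise in time by space-time H\"older plus Gagliardo--Nirenberg is an immaterial variant.

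There is, however, a genuine gap in the step you dismiss as ``routine truncation''. The absorption (``the two gradient terms cancel'') consists in subtracting $C'M_t^{(2-q)/N}\int_0^t\into|Dw|^2\,dx\,ds$ from both sides of the energy inequality, which is legitimate only if this dissipation is \emph{finite}; otherwise the inequality reads $\infty\le\infty$ and yields nothing. Now $u\in\paraccaloc$ does \emph{not} imply $w\in L^2_{loc}((0,T);H^1_0(\Omega))$ when $\si>2$, since the weight $|G_k(u)|^{\si-2}$ is unbounded; and the natural route to this regularity --- a bound, uniform in $n$, on the truncated dissipation $\iint|DG_k(u)|^2|T_n(G_k(u))|^{\si-2}\,dx\,dt$ --- itself requires an absorption whose smallness hypothesis is that $\sup_t\|G_k(u(t))\|_{\elle{\si}}$ be small, i.e.\ essentially the conclusion of the lemma: as written, the argument is circular. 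This is precisely what Step 1 of the paper's proof handles (see \rife{tn} and \rife{reggkun}): the circle is broken by Dini's theorem, which gives $\|G_k(u(t))\|_{\elle{\si}}\to0$ uniformly in $t\in[0,T]$ as $k\to\infty$ (using $u\in\continue{\si}$), so the truncated absorption closes for all $k\ge k^\ast$; a separate argument (using that the estimate for the level $k^\ast$ controls $|G_k(u)|^{\si/2}$ in $L^2(0,T;\elle{2^*})$ for every $k$, together with $q<2$ and Young's inequality) then extends \rife{reggku} and \rife{lim} to \emph{every} $k>0$, which is necessary because the $k$ in the statement can be arbitrarily small. Your proof can be repaired without Dini by running the absorption at the truncated level inside the bootstrap interval $[0,t^\ast]$, where $\sup_{[0,t^\ast]}Y\le\delta<\delta_0$ is available by definition of $t^\ast$: this yields the uniform-in-$n$ bound there, hence finiteness of the dissipation and then the contradiction. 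But this reordering (absorb first at fixed $n$, then let $n\to\infty$) must be made explicit; deriving the untruncated inequality first, as you do, is not justified.
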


\begin{remark}\label{precise}
According to the proof below, the value of $\delta_0$ can be explicitly given by the equality $2^q\frac{c_S \gamma}{\si^q} {\delta_0}^{\frac{2-q}{N}}=4\alpha\frac{\si-1}{\si^2}$.
\end{remark}

\proof

We divide the proof in two steps. \\

\noindent
{\it Step 1} \\
\noindent
Here we show that any solution $u$ according to \cref{def1}  satisfies the properties that
\begin{equation}\label{reggku}
|G_k(u)|^{\frac{\si}{2}-1}G_k(u)\in L^2(0,T;H_0^1(\Omega)),
\end{equation}
\begin{equation}\label{pro2}
(1+|u|)^{\frac{\si}{2}-1}u \in L^2(0,T;H_0^1(\Omega))
\end{equation}
and the following inequality holds for every $k>0$:
\begin{equation}\label{lim}
\begin{array}{c}
\ds
\frac{1}{\si}\int_{\Omega}|G_k (u(t))|^{\si}\,dx+
4\alpha\frac{(\si-1)}{\si^2}
\iint_{Q_t} | D [|G_k(u)|^{\frac{\si}{2}-1}G_k(u)] |^2 \,dx\,ds
\\
[3mm]\ds
\le\frac{2^q\gamma  c_S}{\si^q}\int_0^t\biggl(\int_{\Omega} | D [|G_k(u)|^{\frac{\si}{2}-1}G_k(u)]|^2 \,dx\biggr)\biggl(\int_{\Omega}  |G_k(u(s))|^{\si}\,dx\biggr)^{\frac{2-q}{N}}\,ds\\
 [3mm]\ds
 +\frac{1}{\si}\int_{\Omega}|G_k(u_0)|^{\si}\,dx.
\end{array}
\end{equation}
for every $t\in (0,T]$.  
\\

In order to prove \rife{lim}, we start by considering the case that $\si\geq 2$; then we use $\psi_n(u)= G_k(u)|T_n(G_k(u))|^{\si-2} $ in \cref{propint}. By assumptions \eqref{A1},  \eqref{H}, we get
\begin{equation*} 
\begin{array}{c}
\ds
\into \Psi_n(u(t_2))dx  + \alpha \int_{t_1}^{t_2}\into |DG_k(u)|^2\,\psi_n'(u) dx\,dt
\\
[3mm]\ds\leq \into \Psi_n(u(t_1))dx
+ \gamma \int_{t_1}^{t_2}\into |DG_k(u)|^q\,|T_n(G_k(u))|^{\si-2} |G_k(u)|\,dx\,dt
\end{array}
\end{equation*}
for every $t_1,t_2\in (0,T)$, where $\Psi_n(s)=\int_0^s \psi_n(\xi)d\xi$. 

We estimate the last integral by H\"older's inequality with indices $\left(\frac{2}{q}, \frac{2^*}{2-q},\frac{N}{2-q}\right)$ and we get
\be\label{psiu}
\begin{array}{c} 
\ds
\into \Psi_n(u(t_2))dx   + \alpha \int_{t_1}^{t_2}\into |DG_k(u)|^2\,\psi_n'(u) dx\,dt\\
[3mm]\ds
\leq \into \Psi_n(u(t_1))dx 
+ \gamma \int_{t_1}^{t_2}\biggl[\left(\into |DG_k(u)|^2\,|T_n(G_k(u))|^{\si-2}dx\right)^{\frac q2}\times
\\
[3mm]\ds
\times \left(\into |T_n(G_k(u))|^{\frac {(\si-2)N}{N-2}}|G_k(u)|^{2^*}dx\right)^{\frac{2-q}{2^*}} \left(\into |G_k(u)|^{\si}\,dx\right)^{\frac{2-q}N}\biggr]dt\,,
\end{array}
\ee 
where we used the exact value of $\si$.

Let us now justify that we can take the limit as $n\to \infty$ in \rife{psiu}. Indeed, 
since $\Psi_n(u)\leq c \, |u|^{\si}$, the terms at $t_1$ and $t_2$ are bounded uniformly with respect to $n$.  Using that  $\psi_n'(u)\geq |T_n(G_k(u))|^{\si-2}$ we deduce that 
\be\label{tn}
\begin{array}{c}
\ds \alpha\int_{t_1}^{t_2}\into |DG_k(u)|^2\,|T_n(G_k(u))|^{\si-2}dx\,dt\\
[3mm]\ds
\leq c
+ c \sup\limits_{[t_1,t_2]} \|G_k(u(t))\|_{\elle{\si}}^{\frac{\si(2-q)}N} \times
\\
[3mm]\ds \times 
\int_{t_1}^{t_2}\left(\into |DG_k(u)|^2\,|T_n(G_k(u))|^{\si-2}dx\right)^{\frac q2} \left(\into |T_n(G_k(u))|^{\frac{(\si-2)N}{N-2}}|G_k(u)|^{2^*}dx\right)^{\frac{2-q}{2^*}} dt.
\end{array}
\ee
%for some constant $c$ depending on the bound of $u$ in $\limitate{2\si}$ and, therefore, independent of $n$. 
Finally, since $\int_0^s |T_n(\xi)|^{\frac{\si}{2}-1}d\xi \geq c\, |s\, T_n(s)^{\frac{\si}{2}-1} |$ for some constant independent of $n$, we have
$$
\into |T_n(G_k(u))|^{ \frac{(\si-2)N}{N-2}} |G_k(u)|^{2^*}\,dx \leq c \into \left | \int_0^{G_k(u)} |T_n(\xi)|^{\frac{\si}{2}-1}d\xi\right|^{2^*}
$$
so using Sobolev inequality we conclude from \rife{tn} that
\begin{equation*} 
\begin{array}{c}
\ds \int_{t_1}^{t_2}\into |DG_k(u)|^2\,|T_n(G_k(u))|^{\si-2}dx\,dt 
\\
[3mm]\ds
\leq c+ c \sup\limits_{[t_1,t_2]} \|G_k(u(t))\|_{\elle{\si}}^{\frac{\si(2-q)}N}  \left(\int_{t_1}^{t_2}\into |DG_k(u)|^2\,|T_n(G_k(u))|^{\si-2}dx\,dt\right)\,.
\end{array}
\end{equation*}
By Dini's convergence Theorem, we have that $\|G_k(u(t))\|_{\elle{\si}}$ converges to zero as $k\to \infty$, uniformly for $t\in (0,T)$. Therefore, there exists $k^*$ such that for $k\geq k^*$ the right-hand side can be absorbed into the left-hand side and  a uniform estimate follows:
\be\label{reggkun}
\int_{t_1}^{t_2}\into |DG_k(u)|^2\,|T_n(G_k(u))|^{\si-2}dx\,dt\leq c\,.
\ee
By letting $n\to \infty$, and since $(t_1,t_2)$ is arbitrary,  we conclude that \eqref{reggku} holds
%\be\label{reggku}
%|G_k(u)|^{\frac{\si}{2}-1}G_k(u)\in L^2(0,T; H^1_0(\Omega))
%\ee
for $k\geq k^*$.

We go back now to the inequality   \eqref{psiu} where we can take the limit as  $n\to \infty$. 
Since $\psi_n(s)\to |G_k(s)|^{\si-2}G_k(s)$, once we take the limit we obtain
\besac 
\ds
\frac{1}{\si}\int_{\Omega}|G_k (u(t_2))|^{\si}\,dx+
\alpha (\si-1)\int_{t_1}^{t_2}\into |DG_k(u)|^2\,|G_k(u))|^{\si-2}dx\,dt\\
[3mm]\ds
\leq \frac{1}{\si}\int_{\Omega}|G_k (u(t_1))|^{\si}\,dx
\\
[3mm]\ds
 + \gamma \int_{t_1}^{t_2}\left(\into |DG_k(u)|^2\,|G_k(u)|^{\si-2}dx\right)^{\frac q2} \left(\into |G_k(u)|^{\si \frac{2^*}{2}}dx\right)^{\frac{2-q}{2^*}} \left(\into |G_k(u)|^{\si}\,dx\right)^{\frac{2-q}N}dt\,.
\eesac
Using Sobolev inequality in the right-hand side, and letting $t_1\to 0$ (which is licit since $u\in \continue{\si}$), we conclude that \rife{lim} holds true.
Moreover, we notice that, once we know that \rife{reggku} holds for {\it some} $k>0$, this already implies that $|G_k(u)|^\frac{\si}{2} \in L^2(0,T; \elle{2^*})$ for {\it any} $k>0$, hence from \rife{tn} using $q<2$ one easily obtains that estimate \rife{reggkun} is still true for any possible value of $k$. Therefore, as before one deduces that \rife{reggku} holds true and one gets at inequality \rife{lim} for every $k>0$. 

Let us now consider the case that $\si\in (1,2)$. Here we take $\psi_n(u)=(\si-1)\int_0^{G_k(u)} (\vep_n+|\xi|)^{\si-2}d\xi$ where $\vep_n=\frac1n $. 
Since $|\psi_n(u)|\leq (\vep_n+|G_k(u)|)^{\si-1}$ in this case \rife{psiu} can be replaced by 
 \besac
\ds\into \Psi_n(u(t_2))dx   + \alpha \int_{t_1}^{t_2}\into |DG_k(u)|^2\,\psi_n'(u) dx\,dt\\
[3mm]\ds
\leq \into \Psi_n(u(t_1))dx
 + \gamma \int_{t_1}^{t_2}\biggl[\left(\into |DG_k(u)|^2\,(\vep_n+|G_k(u)|)^{\si-2}dx\right)^{\frac q2}  \left(\into (\vep_n+|G_k(u)|)^{ \si\,\frac{2^*}{2}} dx\right)^{\frac{2-q}{2^*}}\times \\
  [3mm]\ds\times \left(\into (\vep_n+|G_k(u)|)^{\si}\,dx\right)^{\frac{2-q}N}\biggr]dt\,.
\eesac
This inequality in particular yields
 \besac
\ds \int_{t_1}^{t_2}\into |DG_k(u)|^2\,(\vep_n+|G_k(u)|)^{\si-2} dx\,dt\leq c
\\
[3mm]\ds + c\int_{t_1}^{t_2} \left(\into (\vep_n+|G_k(u)|)^{ \frac{2^*\si}{2}} dx\right)^{\frac{2}{2^*}} \left(\into (\vep_n+|G_k(u)|)^{\si}\,dx\right)^{\frac{2}N}dt\,,
\eesac
and since
$$
(\vep_n+|G_k(u)|)^{\si\,\frac{2^*}{2}} \leq c \left[\left(\int_0^{G_k(u)} (\vep_n+|\xi|)^{ (\frac{\si}{2}-1)}\,d\xi\right)^{2^*}+ \vep_n^{ \frac{2^*\si}{2}}\right]
$$
by Sobolev's inequality  we get
$$
\left\{1-c \left[ \vep_n^{\frac{2\si}N}+ \| \sup\limits_{[t_1,t_2]} \|G_k(u(t))\|_{\elle{\si}}^{\frac{2\si}N}\right]\right\}
\int_{t_1}^{t_2}\into |DG_k(u)|^2\,(\vep_n+|G_k(u)|)^{\si-2} \,dx\,dt\leq c\,.
$$
Thus we conclude again an estimate, for sufficiently large $k$,  which is uniform with respect to $n$ and which allows us to deduce that \rife{reggku} holds true once we take the limit as $n\to \infty$. In addition, since again we have  $\psi_n(s)\to |G_k(s)|^{\si-2}G_k(s)$, we also recover inequality \rife{lim} for the case $\si<2$. Eventually, with the same argument as before both \rife{reggku} and \rife{lim} are shown to hold for {\it every} $k>0$. 

To complete the claim of this Step 1, we only need to show that $(1+|u|)^{\frac{\si}{2}-1}u \in L^2(0,T;H_0^1(\Omega))$, and this is now concerned only with the bounded values of $u$, since we already know that $ |G_k(u)|^{\frac{\si}{2}-1}G_k(u) \in L^2(0,T;H_0^1(\Omega))$.
Thus we  use $T_{k}(u)$ as test function obtaining
\beac\label{tkk}
\ds\int_{\Omega}\Theta_{k}(u(t_2))\,dx+  \alpha\int_{t_1}^{t_2}\into| D T_{k}(u)|^2\,dx\,dt\\
[3mm]\ds
\le  \int_{\Omega}\Theta_{k}(u(t_1))\,dx +\gamma \int_{t_1}^{t_2}\into | D u|^q|T_{k}(u)|\,dx\,dt ,
\eeac
where $\Theta_{k}(s)=\int_0^s T_{k}(z)\,dz$.\\
Using the decomposition $u=G_{k}(u)+ T_{k}(u)$ we estimate the right-hand side as
\besac
\ds
\int_{\Omega}\Theta_{k}(u(t_1))\,dx +\gamma \int_{t_1}^{t_2}\into | D u|^q|T_{k}(u)|\,dx\,dt \\
[3mm]\ds 
\leq k  \into |u(t_1)|dx + \gamma\, k  \int_{t_1}^{t_2}\into | D u|^q \,dx\,dt
\\
[3mm]\ds \leq k^2 |\Omega| + k\|G_{k}(u(t_1))\|_{\elle{\si}} |\Omega|^{1-\frac1{\si}} 
 + \frac\alpha 2   \int_{t_1}^{t_2}\into | D T_{k}(u)|^2 \,dx\,dt+ C  k^{\frac 2{2-q}} \\
 [3mm]\ds +  \ga k \int_{t_1}^{t_2}\into | D G_{k}(u)|^q \,dx\,dt,
\eesac
for some constant $C= C(\alpha, \gamma, T, |\Omega|)$. Therefore \rife{tkk} implies
\beac\label{tkk2}
 \ds \frac\alpha2\int_{t_1}^{t_2}\into| D T_{k}(u)|^2\,dx\,dt\\
 [3mm]\ds\leq k^2 |\Omega| + k\|G_{k}(u(t_1))\|_{\elle{\si}} |\Omega|^{1-\frac1{\si}} 
+   C  k^{\frac 2{2-q}}  + \ga k \int_{t_1}^{t_2}\into | D G_{k}(u)|^q \,dx\,dt
\eeac
By choosing $k= 2k^*$, if $\si\geq2$ we have 
$$
| D G_{k}(u)|^q \leq \left(\frac{G_{k^*}(u)}{k^*}\right)^{(\frac{\si}{2}-1)q}   | D G_{k^*}(u)|^q
\leq 1+ \frac4{\si^2\, (k^*)^{\sigma-2}} | D |G_{k^*}(u)|^\frac{\si}{2} |^2\,.
$$
Hence we deduce from \rife{tkk2} that
\be\label{tk*}
\hspace*{-3mm}\int_{t_1}^{t_2}\into| D T_{2k^*}(u)|^2\,dx\,dt\leq C\left(k^*, \|u\|_{\limitate{\si}},  \| |G_{k^*}(u)|^{\frac{\si}{2}-1}G_{k^*}(u)\|_{L^2((0,T);H^1_0(\Omega))}\right).
\ee
If rather $\si<2$, we estimate
$$
\into | D G_{k}(u)|^q \,dx \leq \frac q2 \into \frac{| D [|G_{k}(u)|^{\frac{\si}{2}-1}G_{k}(u)] |^2}{\si^2/4} \,dx + \left(1-\frac q2\right)
\into |G_{k}(u)|^{\frac{(2-\si)q}{2-q}} \,dx\,.
$$
But we already know that, for $k\geq k^*$,  $|G_k(u)|^{\frac{\si}{2}}\in \limitate2\cap L^2((0,T);H^1_0(\Omega))$, hence  $|G_k(u)|^{\frac{\si}{2}} \in \parelle{2\frac{N+2}N}$ by Gagliardo-Nirenberg interpolation (see e.g. \cite{Di}); since $\frac{(2-\si)q}{2-q}<\si\frac{(N+2)}N$, the last integral is also estimated by the norm of $|G_{k}(u)|^{\frac{\si}{2}-1}G_{k}(u)$ in $\limitate2\cap L^2((0,T);H^1_0(\Omega))$. Therefore, \rife{tk*} also holds for $\si<2$. 

Overall, combining estimate \rife{reggku} and \rife{tk*} we can conclude that \eqref{pro2} holds.\\

\noindent
{\it Step 2} \\
\noindent  We now define the value $\delta_0$ so that 
$$
\frac{\gamma 2^q c_S}{\si^q} {\delta_0}^{\frac{2-q}{N}}=4\alpha\frac{\si-1}{\si^2}.
$$
Let $\de<\de_0$ and take some  $k$ such that  
\[
\|G_k(u_0)\|_{L^{\si}(\Omega)}^{\si}< \de \,.
\]
 %for every $k\ge  k_0$.
Since $u\in \continue \si$ the value  
\begin{equation*}
T^*:=\sup\{t\in [0,T]: \,\|G_k(u(s))\|_{L^{\si}(\Omega)}^{\si} < \delta  \,\quad\forall\, s\le t  \}
\end{equation*}
is well posed and we have  that $T^* >0$.\\
We claim   that $T=T^*$. Indeed, if $t\le T^*$ in \eqref{lim}, then the definition of $\delta_0$ implies that
\begin{equation}\label{datot}
\begin{array}{c}
\ds
\frac1\sigma\int_{\Omega}|G_k(u(t))|^{\si}\,dx+ \frac{\gamma 2^q c_S}{\si^q} \left[{\delta_0}^{\frac{2-q}{N}}-\de^{\frac{2-q}{N}}\right]
\int_0^t\int_{\Omega}  | D [|G_{k}(u)|^{\frac{\si}{2}-1}G_{k}(u)] |^2  \,dx\,ds\\
[3mm]\ds
\le \frac1\sigma \int_{\Omega}|G_k(u_0)|^{\si}\,dx.
\end{array}
\end{equation}
Suppose, by contradiction, that $T^*<T$ and let $t=T^*$ in \eqref{datot}, so we get
\[
\int_{\Omega}|G_k(u(T^*))|^{\si}\,dx\le\int_{\Omega}|G_k(u_0)|^{\si}\,dx<\delta.
\]
Since $u\in C^0([0,T];L^{\si}(\Omega))$, if $T^*<T$ this last inequality leads to a contradiction with the definition of $T^*$. Hence we conclude that $T^*=T$.
\qed

\vskip1em

The above contraction principle has two relevant consequences. The first one is the following  a \emph{priori estimate}.

\begin{theorem}\label{sap}
Assume \eqref{A1},  \eqref{H} with $2-\frac{N}{N+1}< q<2$ and \eqref{ID1}. Let $u$ be a   solution of \eqref{P}. Then 
the following estimate holds:
\be\label{aprio}
\sup_{t\in [0,T]}\|u(t)\|_{L^{\si}(\Omega)}+\| D[(1+|u|)^{\frac{\si}{2}-1}u]\|_{L^2(Q_T)}\le M
\ee
where the constant $M$ depends on $u_0$, $T$, $|\Omega|$, $q$, $\gamma$, $N$, $\alpha$ and remains bounded when $u_0$ varies in sets which are bounded and equi-integrable in $L^{\si}(\Omega)$. 
\end{theorem}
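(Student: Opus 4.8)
The plan is to read off the estimate \eqref{aprio} from the contraction principle of \cref{lemGk} together with the energy information already produced in its proof, the key point being that equi-integrability of the initial data lets us fix a \emph{uniform} truncation level. First I would fix once and for all a value $\delta<\delta_0$, with $\delta_0$ as in \cref{precise}. Since the family of admissible $u_0$ is bounded and equi-integrable in $\elle\sigma$, the quantity $\|G_k(u_0)\|_{\elle\sigma}^\sigma$ tends to $0$ as $k\to\infty$ \emph{uniformly} over the family; hence there is a single threshold $k_0=k_0(\delta,N,\Omega,\text{modulus of equi-integrability})$, independent of the individual datum, such that $\|G_{k_0}(u_0)\|_{\elle\sigma}^\sigma<\delta$ for every $u_0$ in the set. \cref{lemGk} then yields $\|G_{k_0}(u(t))\|_{\elle\sigma}^\sigma<\delta$ for all $t\in[0,T]$, and the first half of \eqref{aprio} follows immediately from the splitting $u=T_{k_0}(u)+G_{k_0}(u)$, since $\sup_{t}\|u(t)\|_{\elle\sigma}\le \delta^{1/\sigma}+k_0|\Omega|^{1/\sigma}$, a bound that does not depend on the particular $u_0$.

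For the gradient term I would bound the ``large'' and ``small'' parts of $(1+|u|)^{\frac\sigma2-1}u$ separately. The large part is controlled by inequality \eqref{datot} (which is \eqref{lim} after the absorption performed in Step~2 of \cref{lemGk}): evaluating it at $t=T$ with $k=k_0$ and using $\|G_{k_0}(u(s))\|_{\elle\sigma}^\sigma<\delta<\delta_0$ on the whole time interval, the positivity of the coefficient $\delta_0^{\frac{2-q}N}-\delta^{\frac{2-q}N}$ gives
\[
\iint_{Q_T}\bigl|D[\,|G_{k_0}(u)|^{\frac\sigma2-1}G_{k_0}(u)]\bigr|^2\,dx\,dt\le \frac{\delta/\sigma}{\frac{\gamma 2^qc_S}{\sigma^q}\bigl(\delta_0^{\frac{2-q}N}-\delta^{\frac{2-q}N}\bigr)}=:M_2 .
\]
The small part is controlled by \eqref{tk*}: the argument \eqref{tkk2}--\eqref{tk*} of Step~1 applies verbatim with $k_0$ in place of $k^*$ (it only uses that \eqref{reggku} holds, which is true for every $k$), giving $\iint_{Q_T}|DT_{2k_0}(u)|^2\le M_3$ with $M_3$ depending on $k_0$, on $\sup_t\|u(t)\|_{\elle\sigma}$, and on $M_2$. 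Finally I would split $Q_T$ into $\{|u|\ge 2k_0\}$ and $\{|u|<2k_0\}$: on the first set $Du=DG_{k_0}(u)$ and a pointwise comparison of $(1+|u|)^{\frac\sigma2-1}$ with $|G_{k_0}(u)|^{\frac\sigma2-1}$ (using $1+|u|\ge|G_{k_0}(u)|$ together with $|G_{k_0}(u)|\ge\tfrac12|u|$ on that set, so that the inequality holds whatever the sign of $\tfrac\sigma2-1$) reduces the integrand to a multiple of $|D[\,|G_{k_0}(u)|^{\frac\sigma2-1}G_{k_0}(u)]|^2$; on the second set $Du=DT_{2k_0}(u)$ and the factor $(1+|u|)^{\frac\sigma2-1}$ is bounded by a constant $C(k_0,\sigma)$. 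Adding the two contributions bounds $\|D[(1+|u|)^{\frac\sigma2-1}u]\|_{L^2(Q_T)}$ by a constant built from $M_2$, $M_3$, $k_0$ and $\sigma$, completing \eqref{aprio}.

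The only genuinely delicate point is the \emph{uniformity} of the threshold $k_0$, i.e. the claim that every constant produced above stays bounded as $u_0$ ranges over a bounded equi-integrable set rather than depending on the solution through the \emph{a priori} unknown level $k^*$ of \cref{lemGk}. This is exactly what the combination of equi-integrability and the contraction principle buys us: equi-integrability fixes $k_0$ uniformly at the initial time, and \eqref{lemgk} propagates the smallness $\|G_{k_0}(u(t))\|_{\elle\sigma}^\sigma<\delta$ to all times, so that $k_0$ may play the role of $k^*$ throughout. I would take care to record the explicit dependence $M=M(u_0,T,|\Omega|,q,\gamma,N,\alpha)$ — with $c_S$ and $\delta_0$ absorbed into the structural constants — and to note that, by the above, $M$ is in fact controlled solely through $|\Omega|$, the structural constants and the modulus of equi-integrability of the initial data.
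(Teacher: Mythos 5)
Your proposal is correct and follows essentially the same route as the paper's proof: fix $\delta<\delta_0$, choose $k_0$ uniformly via equi-integrability, use \eqref{datot} from \cref{lemGk} to control $G_{k_0}(u)$ and its weighted gradient, use \eqref{tkk2}--\eqref{tk*} for the truncated part, and combine. The only difference is that you carry out explicitly the final splitting of $Q_T$ into $\{|u|\ge 2k_0\}$ and $\{|u|<2k_0\}$, which the paper dismisses as ``a simple exercise,'' and your execution of that step is sound.
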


\proof Let $\de_0$ be given as in \cref{lemGk}, and fix some $\de<\de_0$ (e.g. take $\de= \frac {\de_0}2$) and let $k_0$ be such that $ \|G_k(u_0)\|_{L^{\si}(\Omega)}^\si< \de$ for $k\geq k_0$. As a consequence of \rife{datot} in \cref{lemGk}, for any $k\geq k_0$ we estimate
$\|G_k(u)\|_{\limitate{\si}}$ and $\| D [|G_k(u)|^{\frac{\si}{2}-1}G_k(u)]\|_{L^2(Q_T)}$ only in terms of $q$, $N, \gamma, \alpha$. Next, from \rife{tkk2} in \cref{lemGk} we estimate $\| D T_k(u)\|_{\parelle2}$ in terms of the above constants and of $T, |\Omega|$and $k_0$. This latter value depends on $u_0$ and remains bounded for a  family of initial data  $u_0$ which are equi-integrable in $\elle{\si}$. Finally, it is  a simple exercise to combine these two estimates in order to obtain \rife{aprio}.
\qed

\vskip1em
A second consequence of \cref{lemGk} is the \emph{contraction property} in $\elle\infty$.

\begin{corollary}\label{corinfty}
Assume \eqref{A1},  \eqref{H} with $2-\frac{N}{N+1}< q<2$ and let the initial datum $u_0$ belong  to $L^{\infty}(\Omega)$. If $u$ is a solution of \eqref{P}, then we have that $u$ belongs to $L^{\infty}(Q_T)$ and 
\[
\|u(t)\|_{L^{\infty}(\Omega)}\le \|u_0\|_{L^{\infty}(\Omega)}\quad \forall\, t\in[0,T].
\]
\end{corollary}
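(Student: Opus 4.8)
The plan is to deduce the statement directly from the contraction principle of \cref{lemGk}, the only nontrivial point being the right choice of truncation level. First I would record that since $\Omega$ is bounded we have $L^\infty(\Omega)\subset \elle\si$, so $u_0$ satisfies \rife{ID1} and both \cref{lemGk} and the class $\continue\si$ are available for $u$. The heart of the argument is then to apply \cref{lemGk} with the specific choice
\[
k=\|u_0\|_{L^\infty(\Omega)}.
\]
At this level $|u_0|\le k$ almost everywhere, so by the definition \rife{gktk} of $G_k$ one has $G_k(u_0)=0$ a.e., and consequently $\|G_k(u_0)\|_{L^\si(\Omega)}^\si=0$.

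With this observation the smallness hypothesis of \cref{lemGk} is satisfied for \emph{every} threshold $\delta\in(0,\delta_0)$, since $0<\delta$ trivially. Hence \cref{lemGk} yields $\|G_k(u(t))\|_{L^\si(\Omega)}^\si<\delta$ for all $t\in[0,T]$ and for all $\delta\in(0,\delta_0)$. Letting $\delta\to 0^+$ forces $\|G_k(u(t))\|_{L^\si(\Omega)}^\si=0$ for every $t$, that is $G_k(u(t))=0$ a.e., which means precisely $|u(t)|\le k=\|u_0\|_{L^\infty(\Omega)}$ a.e. in $\Omega$ for each $t\in[0,T]$. This simultaneously shows $u\in L^\infty(Q_T)$ and gives the claimed pointwise bound $\|u(t)\|_{L^\infty(\Omega)}\le\|u_0\|_{L^\infty(\Omega)}$.

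There is no real analytic obstacle here: the corollary is an immediate by-product of the contraction estimate, and the entire content lies in selecting $k$ equal to the $L^\infty$-norm of the datum so that the excess function $G_k(u_0)$ vanishes identically. The only points that deserve a word of care are the inclusion $L^\infty(\Omega)\subset\elle\si$ needed to invoke \cref{lemGk}, and the limit $\delta\to 0^+$, which is legitimate because the conclusion of \cref{lemGk} holds uniformly for all admissible $\delta$ and the continuity $u\in\continue\si$ guarantees that $t\mapsto\|G_k(u(t))\|_{L^\si(\Omega)}$ is well defined on the whole interval $[0,T]$.
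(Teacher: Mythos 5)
Your proposal is correct and follows exactly the paper's own argument: choose $k=\|u_0\|_{L^\infty(\Omega)}$ so that $G_k(u_0)=0$, apply \cref{lemGk} for every admissible $\delta$, and let $\delta\to 0^+$. The remarks on the inclusion $L^\infty(\Omega)\subset\elle\si$ and on the role of the continuity $u\in\continue\si$ are sound bits of bookkeeping that the paper leaves implicit.
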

\proof
Let $k= \|u_0\|_{L^{\infty}(\Omega)}$. Then \eqref{lemgk} holds with $\delta$ arbitrary small and   the assertion follows by letting $\delta$ go to zero.
\endproof

\begin{remark}\label{rmksubsol}
We notice that \cref{lemGk} and   \cref{sap} actually hold  for nonnegative subsolutions of
\begin{equation}\tag{$P_{z}$}\label{pz}
\begin{cases}
\begin{array}{ll}
\ds z_t -\text{div}(a(t,x,z, D z))=\gamma | D z|^q  &\ds \text{in}\ Q_T,\\
\ds z=0   &\ds \text{on}\ (0,T)\times \partial\Omega,\\
\ds z(0,x)=z_0(x)  & \ds \text{in}\  \Omega.
\end{array}
\end{cases}
\end{equation}
%namely, for positive functions $z$ as in Definition \eqref{sbsp}.
In fact, if the Hamiltonian function $H(t,x,\xi)$ has a sign, it would be more convenient to work with just nonnegative  subsolutions. Assume for instance that $u H(x,u, D u)\geq 0$; then by writing 
$u$ as the difference between its positive and negative parts, i.e. $u=u^+-u^-$, it is not difficult to see that 
\begin{equation*}
u_t^+ -\text{div}(a(t,x,u^+,  D u^+))\le H(t,x, D u ) \chi_{\{u > 0\}}  \leq \gamma | D u^+|^q
\end{equation*}
while 
\begin{equation}\label{u-}
u_t^- -\text{div}(a(t,x,u^-,  D u^-))\le - H(t,x, D u ) \chi_{\{u \leq  0\}}  \le 0.
\end{equation}
Therefore the problem related to $u^-$ turns out to be a  standard coercive problem, whereas all the difficulties of the superlinear term are observed in the behavior of $u^+$. In particular, in this case all necessary conditions and any other effect of superlinearity should be discussed in terms of $u_0^+$ only.
\end{remark}

\subsection{The borderline case $q=2-\frac{N}{N+1}$}\label{q=}
This particular value of $q$ implies that $\sigma=1$. However, $L^1$-data are not admissible for a general solvability in this case (see also \cite{BASW,M}). Therefore, in the class of Lebesgue spaces we should 
 take into account initial data belonging to $L^{1+\omega}(\Omega)$ for some $\omega\in(0,1)$. This stronger assumption allows us to apply directly what we obtained before: indeed, given $\omega>0$ such that $u_0\in L^{1+\omega}(\Omega)$, then it is always possible to embed our nonlinearity into a  $q$-growth for some $q=q_\omega$ such that
 $1+\omega= \frac{N(q-1)}{2-q}$. In particular, the a priori estimate reads as below:
\begin{theorem}\label{sapprinc}
Assume \eqref{A1}, \eqref{A2}, \eqref{H} with $q=p-\frac{N}{N+1}$ and $u_0\in L^{1+\om}$, $0<\omega<1$. Moreover, let $u$ be a renormalized solution \eqref{P} as in \cref{defrin} and satisfying the regularity condition
\[
(1+|u|)^{-\frac{1-\omega}{2}}u\in L^2(0,T;H_0^1(\Omega)).
\]
Then we have that the following estimate holds:
\begin{equation}\label{disapprinc}
\sup_{t\in [0,T]}\|u(t)\|_{L^{1+\omega}(\Omega)}^{1+\omega}+\| D((1+|u|)^{-\frac{1-\omega}{2}-1}u)\|_{L^2(Q_T)}^2\le M 
\end{equation}
where the constant $M$ depends on $\alpha$, $q$, $\gamma$, $N$, $|\Omega|$, $T$ and on $u_0$ and remains bounded when $u_0$ varies in sets which are bounded in $L^{1+\omega}(\Omega)$.
\end{theorem}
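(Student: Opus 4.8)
The plan is to reduce the borderline case to the already treated range $2-\frac N{N+1}<q<2$ by raising the growth exponent. Since $u_0\in\elle{1+\om}$ with $0<\om<1$, set $\si:=1+\om$ and let $q_\om$ be the exponent determined by $1+\om=\frac{N(q_\om-1)}{2-q_\om}$, i.e. $q_\om=\frac{N+2+2\om}{N+1+\om}$; a direct computation gives $2-\frac N{N+1}=q<q_\om<2$, so that $\si=1+\om$ is exactly the critical Lebesgue exponent associated with $q_\om$ through \rife{ID1}. The elementary inequality $t^q\le 1+t^{q_\om}$ for $t\ge 0$ (valid because $q\le q_\om$) then yields the pointwise bound
\be\label{embed}
|H(t,x,Du)|\le \ga|Du|^q\le \ga+\ga|Du|^{q_\om},
\ee
so that, as far as upper estimates are concerned, $u$ may be regarded as a renormalized solution of a problem whose first order term has the critical growth $q_\om$, up to the harmless additive constant $\ga$. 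Notice that the standing regularity hypothesis $(1+|u|)^{-\frac{1-\om}2}u\in L^2(0,T;H^1_0(\Omega))$ is precisely \rife{beta} with $\si=1+\om$, since $\frac\si2-1=-\frac{1-\om}2$.

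With this reduction I would repeat the argument of \cref{lemGk} in its $\si\in(1,2)$ form (as here $\si=1+\om<2$) together with that of \cref{sap}, carried out in the renormalized framework of \cref{defrin}. Concretely, I would test the renormalized equation with the truncated weights $\psi_n(u)=(\si-1)\int_0^{G_k(u)}(\vep_n+|\xi|)^{\si-2}\,d\xi$ and, separately, with $T_k(u)$, exactly as in \cref{lemGk}; assumption \rife{beta} is what guarantees the admissibility of these test functions and the vanishing, as $n\to\infty$, of the asymptotic (truncation) energy term. The $\ga|Du|^{q_\om}$ contribution produced by \rife{embed} is treated verbatim as the critical term in \cref{lemGk}: Hölder's and Sobolev's inequalities lead to the coefficient $\bigl(\int_\Omega|G_k(u)|^\si\bigr)^{\frac{2-q_\om}N}$ in front of the energy. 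Since $\|G_k(u_0)\|_{\elle\si}\to0$ as $k\to\infty$ (by equi-integrability of $|u_0|^\si$, $u_0\in\elle\si$), one fixes $k_0$ making this coefficient smaller than the threshold $\de_0$ of \cref{lemGk}, thus absorbing the energy into the left-hand side and obtaining the analogue of \rife{datot} for $G_{k_0}(u)$.

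The only genuinely new term is the additive $\ga$ in \rife{embed}, which upon testing contributes $\ga\iint_{Q_t}|\psi_n(u)|\le \ga\iint_{Q_t}(1+|G_k(u)|)^{\si-1}$; since $\si-1=\om<\si$, this is \emph{lower order} with respect to $\sup_t\|u(t)\|_{\elle\si}^\si$, and Young's inequality absorbs it into the left-hand side at the cost of an additive constant depending on $\ga,T,|\Omega|,\om$. Combining the bound for $G_{k_0}(u)$ with the coercive estimate for $T_{k_0}(u)$ obtained from the $T_k(u)$ test (the analogue of \rife{tkk2}, where again the $\ga|Du|^{q_\om}$ and $\ga$ terms are split off and absorbed) yields \rife{disapprinc}, with $M$ depending only on the stated quantities and bounded on bounded subsets of $\elle\si$. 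The main obstacle I anticipate is not the algebra but the renormalization step: because $\si<2$ the solution need not have finite global energy, so every integration by parts must be performed on truncations within \cref{defrin} and then passed to the limit, and it is exactly \rife{beta} that must be invoked to control $\frac1n\iint_{\{n<|u|<2n\}}a(t,x,u,Du)\cdot Du$ and to legitimize the chain rule leading to \rife{lim}; verifying that the extra term from \rife{embed} stays strictly subcritical, so that no smallness of the data is required to absorb it, is the second point that needs care.
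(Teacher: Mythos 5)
Your reduction is the one the paper itself invokes (its entire justification for this theorem is the one sentence preceding it, about embedding the nonlinearity into a $q_\omega$-growth), and your identification of $q_\omega$, of $\sigma=1+\omega$ as the critical exponent for $q_\omega$, and of the hypothesis as \rife{beta} with $\si=1+\omega$ are all correct. The gap is in your treatment of the additive constant, which is not a point of care but the place where this route breaks. The absorption in \cref{lemGk} is a continuity argument: one must keep $\sup_{s\le t}\|G_k(u(s))\|_{L^{\sigma}(\Omega)}^{\sigma}$ below the \emph{fixed} threshold $\delta_0$ on all of $[0,T]$, and the argument closes only because, in the absence of forcing, \rife{datot} returns a value which is again $<\delta$. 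With the forcing term, what the same argument returns is at best (using even the sharp bound $|\psi_n(u)|\le |G_k(u)|^{\sigma-1}$, supported in $\{|u|>k\}$)
\[
\|G_k(u(t))\|_{L^\sigma(\Omega)}^{\sigma}\le \|G_k(u_0)\|_{L^\sigma(\Omega)}^{\sigma}+\sigma\gamma\, t\,|\Omega|^{1/\sigma}\delta^{(\sigma-1)/\sigma},
\]
and since $(\sigma-1)/\sigma<1$ the new term dominates $\delta$ as $\delta\to0$: closing the loop forces $\delta>c(\gamma T)^{\sigma}|\Omega|$ \emph{and} $\delta<\delta_0$, which is impossible once $T\gtrsim \delta_0^{1/\sigma}/(\gamma|\Omega|^{1/\sigma})$. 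Your alternative, Young's inequality, is circular: it yields $\sup_t\|G_k(u(t))\|_{L^\sigma}^{\sigma}\le 2\|G_k(u_0)\|_{L^\sigma}^{\sigma}+C(\gamma,T,|\Omega|)$, but that inequality is derived \emph{under the hypothesis} $\sup_t\|G_k(u(t))\|_{L^\sigma}^{\sigma}<\delta_0$ (needed to absorb the $q_\omega$-term into the energy), and $C(\gamma,T,|\Omega|)$ does not lie below $\delta_0$ for large $T$, so the hypothesis can never be self-consistently verified. Smallness of a quantity is not preserved by adding terms that are merely of lower order; as written, your argument gives \rife{disapprinc} only on a short time horizon determined by $\gamma,|\Omega|,\alpha$.

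There is a second defect with the same source: criticality of $L^{1+\omega}$ for $q_\omega$ forces you to require $\|G_{k_0}(u_0)\|_{L^{1+\omega}(\Omega)}<\delta$, i.e.\ equi-integrability of $|u_0|^{1+\omega}$ (this is also what \cref{sap}, which you invoke, assumes); a bounded concentrating sequence in $L^{1+\omega}(\Omega)$ admits no uniform $k_0$, so your $M$ is uniform on equi-integrable sets, not on bounded sets as the statement claims. Both defects come from the fact that the pointwise bound $|\xi|^q\le 1+|\xi|^{q_\omega}$ throws away the real advantage of the borderline exponent. The lossless way to run the argument is to keep the \emph{original} $q$ in the H\"older step of \cref{lemGk}: since $\frac{N(q-1)}{2-q}=1$, the three-exponent H\"older inequality with $\left(\frac 2q,\frac{2^*}{2-q},\frac N{2-q}\right)$ produces the factor $\|G_k(u(s))\|_{L^1(\Omega)}^{q-1}$ rather than an $L^{1+\omega}$-norm, no additive constant ever appears, and the required smallness concerns an $L^1$-quantity for which Chebyshev gives the quantitative, norm-uniform bound $\|G_k(u_0)\|_{L^1(\Omega)}\le k^{-\omega}\|u_0\|_{L^{1+\omega}(\Omega)}^{1+\omega}$. (Closing the continuity argument on that $L^1$-quantity still takes work, e.g.\ coupling the $\sigma$-estimate with a bounded test function in the spirit of \cref{lemGkL1}; to be fair, the paper's own one-sentence justification glosses over all of this as well.)
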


\section{Regularizing effects and long time decay}\label{deca}

We now refine the contraction estimate of \cref{contraction} into a time pointwise statement. 

\begin{lemma}\label{propW11} Assume \eqref{A1}, \eqref{H} with $2-\frac{N}{N+1}< q<2$ and \eqref{ID1}. Let $u$ be a solution of \eqref{P}, and $\Phi: R\to R$ be a $C^2$,   convex  function such that $\Phi'(0)=0$ and $\Phi''(\xi)\leq c (1+|\xi|)^{\si-2}$. Then the function 
$t\mapsto \int_{\Omega}\Phi(u(t))\,dx$ belongs to $W^{1,1}(0,T)$ and satisfies 
\be\label{pointw}
 \frac{\text{d}}{\text{d}t}\int_{\Omega}\Phi(u(t))\,dx+ \int_{\Omega}a(t,x,u, D u)\cdot  D u \Phi''(u)\,dx=\int_\Omega H(t,x, D u)\Phi'(u)\,dx
\ee
for a.e. $t\in(0,T)$.
\end{lemma}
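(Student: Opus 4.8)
The plan is to deduce the pointwise identity \eqref{pointw} from the integrated chain–rule formula of \cref{propint}, by approximating $\Phi'$ with bounded Lipschitz functions and then passing to the limit through a combination of monotone and dominated convergence, which will simultaneously furnish the time–integrability needed to conclude $W^{1,1}$ regularity. We may assume $\Phi(0)=0$ without loss of generality, since adding a constant changes neither $\Phi',\Phi''$ nor $\frac{d}{dt}\int_\Omega\Phi(u)\,dx$.

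First I would apply \cref{propint} with the admissible test function $\psi_n(s):=\Phi'(T_n(s))$, which lies in $W^{1,\infty}(\R)$ and satisfies $\psi_n(0)=\Phi'(0)=0$; its primitive is $\Psi_n(v)=\int_0^v\Phi'(T_n(w))\,dw$ and $\psi_n'(s)=\Phi''(s)\chi_{\{|s|<n\}}$ a.e. This yields, for every $t_1,t_2\in(0,T)$,
\[
\int_{\Omega}\Psi_n(u(t_2))\,dx+\int_{t_1}^{t_2}\!\!\int_\Omega a(s,x,u,Du)\cdot Du\,\psi_n'(u)\,dx\,ds=\int_{t_1}^{t_2}\!\!\int_\Omega H(s,x,Du)\psi_n(u)\,dx\,ds+\int_{\Omega}\Psi_n(u(t_1))\,dx.
\]

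Next I pass to the limit $n\to\infty$ term by term. From $\Phi''(\xi)\le c(1+|\xi|)^{\si-2}$ and $\Phi'(0)=0$ one extracts the growth bounds $|\Phi'(s)|\le c(1+|s|)^{\si-1}$ and $\Phi(s)\le c(1+|s|)^{\si}$. Since $u\in\continue{\si}$, the boundary terms converge by dominated convergence, as $\Psi_n(u(t_i))\to\Phi(u(t_i))$ with $|\Psi_n(v)|\le c(1+|v|)^{\si}\in L^1(\Omega)$. For the right–hand side, $|H\psi_n(u)|\le \gamma c\,|Du|^q(1+|u|)^{\si-1}$, and this majorant belongs to $L^1(Q_T)$ by exactly the Hölder–Sobolev estimate already carried out in the proof of \cref{lemGk}: it uses the regularity $(1+|u|)^{\frac{\si}{2}-1}u\in L^2(0,T;H_0^1(\Omega))$ from \eqref{pro2} together with $u\in\limitate{\si}$, which via Gagliardo–Nirenberg gives $(1+|u|)^{\frac{\si}{2}}\in L^{2\frac{N+2}{N}}(Q_T)$; hence dominated convergence yields $\int_{t_1}^{t_2}\!\int_\Omega H\psi_n(u)\to\int_{t_1}^{t_2}\!\int_\Omega H\Phi'(u)$. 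The energy term is treated by monotone convergence: by coercivity \eqref{A1} the integrand $a\cdot Du\,\psi_n'(u)\ge \alpha|Du|^2\,\Phi''(u)\chi_{\{|u|<n\}}\ge0$ is nonnegative and increases to $a\cdot Du\,\Phi''(u)$, so its integral converges to $\int_{t_1}^{t_2}\!\int_\Omega a\cdot Du\,\Phi''(u)$; since the other three terms have finite limits, this limit is finite. This establishes the integrated identity for $\Phi$ itself.

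Finally, letting $t_1\to0^+$ and $t_2\to T^-$ shows that $t\mapsto\int_\Omega a\cdot Du\,\Phi''(u)\,dx$ lies in $L^1(0,T)$, and the $H$–term is in $L^1(0,T)$ for the same reasons; hence $f(t):=\int_\Omega H(t,x,Du)\Phi'(u)\,dx-\int_\Omega a(t,x,u,Du)\cdot Du\,\Phi''(u)\,dx$ belongs to $L^1(0,T)$. Writing $g(t):=\int_\Omega\Phi(u(t))\,dx$, the integrated identity reads $g(t_2)-g(t_1)=\int_{t_1}^{t_2}f(s)\,ds$ for all $t_1,t_2$; since $g$ is bounded (as $g(t)\le c(|\Omega|+\|u\|_{\limitate{\si}}^{\si})$) and continuous (because $u\in\continue{\si}$ and $\Phi$ has $\si$–growth), it is a constant plus the primitive of an $L^1$ function, so $g\in W^{1,1}(0,T)$ with $g'=f$ a.e., which is precisely \eqref{pointw}. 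I expect the main obstacle to be the time–integrability of the two nonlinear terms: the energy term comes essentially for free from monotone convergence once everything else is controlled, whereas the right–hand side requires reproducing the delicate Hölder–Sobolev/Gagliardo–Nirenberg interpolation of \cref{lemGk}, which is exactly where the global regularity \eqref{pro2} enters.
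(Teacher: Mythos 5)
Your proposal is correct and follows essentially the same route as the paper's proof: both apply \cref{propint} with the truncated test function $\Phi'(T_n(u))$, bound the right-hand side by $|Du|^q(1+|u|)^{\sigma-1}\in L^1(Q_T)$ through the H\"older--Sobolev/Gagliardo--Nirenberg interpolation resting on \eqref{pro2} and $u\in\limitate\si$ (i.e.\ on \cref{sap}), and exploit the sign of the energy term to get its $L^1(Q_T)$ integrability. The only difference is in the packaging of the final step --- the paper recasts the identity distributionally with test functions $\xi\,\Phi'(T_n(u))$, $\xi\in C_c^\infty(0,T)$, obtains the energy term's integrability by Fatou, and passes to the limit of the derivatives in $L^1(0,T)$, whereas you use monotone convergence directly in the integrated identity and conclude via the absolute-continuity characterization of $W^{1,1}$ --- and both versions of this step are valid.
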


\proof 
We choose $\psi(u)= \Phi'(T_n(u))$ in \cref{propint}, using \rife{H} and the growth of $\Phi(\cdot)$ we  get
\besac
\ds\int_{\Omega}S_n(u(t_2))\,dx+
\int_{t_1}^{t_2} \into a(t,x,u, D u)\cdot D T_n(u) \,\Phi''(T_n(u))\,dx\,dt
\\
[3mm]\ds\le c  \int_{t_1}^{t_2} \into   | D u|^q  \, (1+|T_n(u)|)^{\si-1} \,dx\,dt  +\int_{\Omega}S_n(u(t_1))\,dx
,
\eesac
\noindent
where $S_n(v)=\int_0^v \Phi'(T_n(z))\,dz$.\\
\cref{sap} provides us with a bound for the right-hand side which is uniform in $n$ and independent of the interval $(t_1,t_2)$, whereas the left-hand side is positive. Hence, we let $n$ tend to infinity and apply Fatou's Lemma to deduce that $a(t,x,u, D u) \cdot  D u\, \Phi''(u)\in L^1(0,T;L^1(\Omega))$.\\
Now, if we change the previous test function into $\xi \,\Phi'(T_n(u))$ where $\xi\in C_c^{\infty}(0,T)$, we obtain
\besac
\ds-\int_0^T \xi'\int_{\Omega} S_n(u)\,dx\,dt+ \int_0^T \xi\int_{\Omega} a(t,x,u, D u) \cdot D T_n(u) \,\Phi''(T_n(u))\,dx\,dt
\\
[3mm]\ds=\int_0^T\xi\int_\Omega H(t,x, D u)\,\Phi'(T_n(u))\,dx\,dt
\eesac
for every $\xi\in C_c^{\infty}(0,T)$. Otherwise, this means that we have
\begin{equation}\label{w11}
\begin{array}{c}
\ds
\frac{\text{d}}{\text{d}s}\int_\Omega S_n(u)\,dx=
-  \int_{\Omega} a(t,x,u, D u) \cdot D T_n(u)\, \Phi''(T_n(u))\,dx\\
[3mm]\ds
+\int_\Omega H(t,x, D u)\,\Phi'(T_n(u))\,dx
\end{array}
\end{equation}
in the sense of distributions.\\
Since $a(t,x,u, D u)\, \cdot  D u \,\Phi''(u) \in L^1(0,T;L^1(\Omega))$, and the same holds for $|Du|^q\, \Phi'(u)$, the right-hand side of \rife{w11} converges in $L^1(0,T)$ as  $n\to \infty$ by Lebesgue's Theorem. We deduce that
$\frac{\text{d}}{\text{d}s}\int_\Omega S_n(u)\,dx$ converges to $\frac{\text{d}}{\text{d}s}\int_\Omega \Phi(u(s))\,dx$ in $L^1(0,T)$. Finally, we get that $\int_\Omega \Phi(u(s))\,dx\in W^{1,1}(0,T) $ and the equality \rife{pointw}  
holds for almost every $t\in (0,T)$.
\endproof

\begin{proposition}\label{corexp}
Assume \eqref{A1},   \eqref{H} with $2-\frac{N}{N+1}< q<2$ and \eqref{ID1}. Let $u$ be a solution of \eqref{P}. There exists  a value $k_0>0$ such that, for $k\geq k_0$,   $u$ satisfies
\begin{equation}\label{sec}
\frac{\text{d}}{\text{d}t}\int_{\Omega}|G_k(u(t))|^{\si}\,dx+\alpha \frac{2(\si-1)}{\si}
\int_{\Omega}  | D [|G_k(u)|^{\frac{\si}{2}}] |^2  \,dx\,ds
\le 0
\end{equation}
a.e. $t\in(0,T)$ and for all $k\ge k_0$.\\
In particular,  the following exponential decay holds:
\[
\|G_k(u(t))\|_{L^{\si}(\Omega)}^{\si}< e^{-\lambda t} \|G_k(u_0)\|_{L^{\si}(\Omega)}^{\si}\,\,\, \forall \, t\in(0,T)
\]
for some  $\lambda>0$  and $k\geq   k_0$.
\end{proposition}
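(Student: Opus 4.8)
The plan is to upgrade the \emph{integrated} energy estimate \rife{lim} of \cref{lemGk} into a \emph{pointwise-in-time} differential inequality, and then to absorb its nonlinear right-hand side by exploiting the smallness of $\|G_k(u(t))\|_{\elle\si}$ provided by that same lemma. First I would fix the constants. Using \cref{precise}, I choose $\delta<\delta_0$ small enough that
\[
\frac{2^q\gamma c_S}{\si^q}\,\delta^{\frac{2-q}{N}}\le \frac{2\alpha(\si-1)}{\si^2}
\]
(for instance $\delta=2^{-\frac{N}{2-q}}\delta_0$), and then select $k_0>0$ with $\|G_{k_0}(u_0)\|_{\elle\si}^\si<\delta$, which is possible since $u_0\in\elle\si$ forces $\|G_k(u_0)\|_{\elle\si}\to0$ as $k\to\infty$. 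By \cref{lemGk}, for every $k\ge k_0$ one then has $\|G_k(u(t))\|_{\elle\si}^\si<\delta$ for all $t\in[0,T]$.

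Next I would establish the differential form of \rife{lim}. Rather than plugging $\Phi=\frac1\si|G_k(\cdot)|^\si$ into \cref{propW11} — which is not licit, since this $\Phi$ fails the growth bound $\Phi''(\xi)\le c(1+|\xi|)^{\si-2}$ near $|\xi|=k$ when $\si<2$ (and is only $C^1$ when $\si=2$) — I would re-run the computation of \cref{lemGk} with the test functions $\psi_n$ multiplied by a time cutoff $\xi\in C_c^\infty(0,T)$. The gradient regularity \rife{reggku}--\rife{pro2} already secured in \cref{lemGk} guarantees that every term is in $L^1(0,T)$ uniformly in $n$, so one may pass to the limit $n\to\infty$ and conclude that $t\mapsto\into|G_k(u(t))|^\si\,dx$ belongs to $W^{1,1}(0,T)$ and satisfies, for a.e.\ $t$,
\[
\frac1\si\frac{d}{dt}\into|G_k(u)|^\si\,dx+\frac{4\alpha(\si-1)}{\si^2}\into|D[|G_k(u)|^{\frac\si2}]|^2\,dx\le\frac{2^q\gamma c_S}{\si^q}\Big(\into|D[|G_k(u)|^{\frac\si2}]|^2\,dx\Big)\Big(\into|G_k(u)|^\si\,dx\Big)^{\frac{2-q}{N}}.
\]
This step — the rigorous passage from the time-integrated inequality to its a.e.\ pointwise version — is the main technical obstacle, and it mirrors the time-localization argument already carried out in \cref{propW11}.

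With this in hand the conclusion is short. For $k\ge k_0$ the bound $\into|G_k(u)|^\si\,dx<\delta$ makes the coefficient on the right at most $\frac{2\alpha(\si-1)}{\si^2}$, which I absorb into the dissipation term $\frac{4\alpha(\si-1)}{\si^2}\into|D[|G_k(u)|^{\frac\si2}]|^2\,dx$; multiplying by $\si$ leaves exactly \rife{sec}. Finally, applying Poincar\'e's inequality to $w=|G_k(u)|^{\frac\si2}\in H^1_0(\Omega)$ gives $\into|Dw|^2\,dx\ge\lambda_1\into w^2\,dx=\lambda_1\into|G_k(u)|^\si\,dx$; inserting this into \rife{sec} produces
\[
\frac{d}{dt}\into|G_k(u(t))|^\si\,dx+\alpha\frac{2(\si-1)}{\si}\lambda_1\into|G_k(u)|^\si\,dx\le0,
\]
and Gronwall's lemma with $\lambda=\alpha\frac{2(\si-1)}{\si}\lambda_1>0$ yields the stated exponential decay of $\|G_k(u(t))\|_{\elle\si}^\si$.
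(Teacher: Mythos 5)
Your proof is correct and follows essentially the same route as the paper's: \cref{lemGk} supplies the uniform smallness of $\|G_k(u(t))\|_{\elle\si}$, a pointwise-in-time version of \rife{lim} is derived and its right-hand side absorbed into the dissipation to give \rife{sec}, and Poincar\'e plus Gronwall yield the exponential decay. The only divergence is technical: where you re-run the truncated computation of \cref{lemGk} with a time cutoff, the paper instead invokes \cref{propW11} directly with $\Phi'(\xi)=|G_k(\xi)|^{\si-2}G_k(\xi)$ when $\si\ge 2$, and with the regularized primitive $\Phi'(\xi)=(\si-1)\int_0^{G_k(\xi)}(\vep+s)^{\si-2}\,ds$ followed by $\vep\to 0$ when $\si<2$ --- precisely to sidestep the failure of the growth bound $\Phi''(\xi)\le c(1+|\xi|)^{\si-2}$ that you correctly identified.
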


\begin{remark}
Defining
\[
\eta(k)=\int_{ \{|u_0|>k \}}|u_0|^{\si}\,dx\,,
\]
 a possible choice of $k_0$ can be given by  
\[
k_0=\inf\left\{k:\,\, \eta(k)\le \frac{\delta_0}{2^{\frac N{2-q}}}  \right\}
\]
where $\delta_0$ is the  value given by \cref{lemGk}, see \cref{precise}. Note that $\eta(k)$ is a non increasing function of $k$. 
\end{remark}

\proof
If $\si\geq 2$, we apply \cref{propW11} with $\Phi'(\xi)= |G_k(\xi)|^{\si-2}G_k(\xi)$ and we get,
thanks to the assumptions \eqref{A1} and to \eqref{H}, 
$$
\frac1{\si}\frac{\text{d}}{\text{d}t}\int_{\Omega}|G_k(u(t))|^{\si}\,dx+
\alpha(\si-1)
\into | D  G_k(u) |^2 | G_k(u)|^{\si-2}\,dx \\
\le\gamma  \into  | D G_k(u)|^q | G_k(u)|^{\si-1}\,dx\,. 
$$
Next we proceed  as in \cref{lemGk}. We take $\bar \delta_0$ such that  $\frac{c_S 2^q \gamma}{\si^q} {\bar \delta_0}^{\frac{2-q}{N}}=2\alpha\frac{\si-1}{\si^2}$ and we let $  k_0$ be correspondingly defined so that $\into |G_k(u_0)|^{\si}\,dx \leq \bar\de_0$ for every $k\geq k_0$. This implies that $\int_{\Omega}|G_k(u(t))|^{\si}\,dx\leq \bar \de_0$ for all $t\in (0,T)$. Eventually, we obtain  a pointwise in time version of \rife{lim}, which leads to  
\begin{equation*}
\frac{\text{d}}{\text{d}s}\|G_k(u(s))\|_{L^{\si}(\Omega)}^{\si}+\frac{ 2\alpha(\si-1)}{\si}  \| D[|G_k(u(s))|^{\frac{\si}{2}}] \|_{L^2(\Omega)}^2\le 0
\end{equation*}
for a.e. $t\in [0,T]$ and for all $k\ge k_0$.
\\
By the Poincar\'e inequality, we deduce the following differential inequality:
\begin{equation*}
\frac{\text{d}}{\text{d}s}\|G_k(u(s))\|_{L^{\si}(\Omega)}^{\si}+2c_P\frac{\alpha(\si-1)}{\si}  \|G_k(u(s)) \|_{L^{\si}(\Omega)}^{\si}\le 0 
\end{equation*}
for a.e. $t\in [0,T]$ and for all $k\ge k_0$. This  implies 
\[
 \|G_k(u(t))\|_{L^{\si}(\Omega)}^{\si}\le  e^{-\lambda t}\|G_k(u_0)\|_{L^{\si}(\Omega)}^{\si}<e^{-\lambda t}\bar \delta_0 \qquad \forall t\in (0,T)\,,
\]
where $\lambda=2c_P\frac{\alpha(\si-1)}{\si}$.

If $\si<2$, we take $\Phi'(\xi)= ( \si-1)\int_0^{G_k(\xi)} (\vep+s)^{ \si-2}ds$ in \cref{propW11}, then we proceed again as in \cref{lemGk} and we obtain the same conclusion after letting $\vep\to 0$.
\endproof

\begin{remark} According to the above estimate, we have found that $ \|G_k(u(t))\|_{L^{\si}(\Omega)}$ decays exponentially with the rate $\lambda= 2c_P\frac{\alpha( \si-1)}{\si^2}$. This rate will be actually improved later once we show the decay in the $L^2$-norm.

We also stress that the above exponential rate is depending on the bounded domain $\Omega$ (through Poincar\'e's inequality). However, the above proof would also lead to  a long time decay of the $L^{\si}$-norm   in unbounded domains. It is enough to use Sobolev's inequality instead of Poincar\'e and a polynomial decay follows. 
\end{remark}

\subsection{Regularizing effect $L^{\si}-L^{r}$}

\begin{proposition}\label{Prop2r}
Assume \eqref{A1},  \eqref{H} with $2-\frac{N}{N+1}< q<2$, \eqref{ID1} and let $u$ be a solution of \eqref{P}. Then, $u$ belongs to $C^0((0, T];L^{r}(\Omega))$ for any $r>\si$. Moreover, there exists a value $k_0$, independent of $r$, such that the following decay holds:
\be\label{gkr}
\|G_k(u(t))\|_{L^{r}(\Omega)}^{r}\le c_r\frac{\|G_k(u_0)\|_{L^{\si}(\Omega)}^{r}}{t^{N\frac{(r-\si)}{2\si}}}\quad\forall\,\,t\in(0,T),\,\,\forall \,k\ge k_0
\ee
where $c_r=c(r,q,\alpha,\gamma,N)$. Furthermore
\be\label{decayu}
\|u(t)\|_{L^{r}(\Omega)}^{r}\le \frac{c}{t^{N\frac{(r-\si)}{2\si}}}\quad\forall\,\,t\in(0,t_0]
\ee
where $c=c(r,q, \alpha,\gamma,N, t_0,u_0, |\Omega|)$.
\end{proposition}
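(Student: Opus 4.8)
The plan is to reduce the whole statement to a single scalar differential inequality for $y(t)=\|G_k(u(t))\|_{L^r(\Omega)}^r$ and then integrate it, treating the decay \rife{gkr}, the bound \rife{decayu} and the continuity as successive consequences.

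First I would upgrade the energy inequality \rife{sec} of \cref{corexp} from the exponent $\sigma$ to an arbitrary $r>\sigma$. Proceeding as in \cref{lemGk} and \cref{corexp} — that is, using the chain rule \cref{propint} with the truncated test functions $\psi_n(u)=|T_n(G_k(u))|^{r-2}G_k(u)$ (or their $\varepsilon$-regularization when $r<2$) and then passing to the limit as in the first step of \cref{lemGk}, which also yields $|G_k(u)|^{r/2}\in L^2(0,T;H^1_0(\Omega))$ — together with \rife{A1} and \rife{H}, I obtain
\[
\tfrac1r\tfrac{d}{dt}\int_\Omega |G_k(u)|^r\,dx+\alpha(r-1)\int_\Omega |DG_k(u)|^2|G_k(u)|^{r-2}\,dx\le \gamma\int_\Omega |DG_k(u)|^q|G_k(u)|^{r-1}\,dx .
\]
The key computation is to estimate the right-hand side by H\"older with the very same triple of exponents $\bigl(\tfrac2q,\tfrac{2^*}{2-q},\tfrac N{2-q}\bigr)$ used in \cref{lemGk}. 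Since $\sigma=\frac{N(q-1)}{2-q}$, the powers recombine so that the two gradient factors rebuild exactly the diffusion term $\int_\Omega|D[|G_k(u)|^{r/2}]|^2\,dx$, while the surviving Lebesgue factor is precisely $\|G_k(u)\|_{L^\sigma(\Omega)}^{\sigma(2-q)/N}$ — and this happens for \emph{every} $r>\sigma$, not only $r=\sigma$. By \cref{lemGk}, choosing $k\ge k_0$ makes this factor arbitrarily small, so it can be absorbed into the diffusion term. Dividing the absorption threshold by the factor $(4/r^2)^{q/2}$ coming from the gradient term, the condition reduces to a smallness of $\|G_k(u)\|_{L^\sigma}^{\sigma(2-q)/N}$ against $\tfrac12\alpha\,4^{(2-q)/2}(r-1)r^{-(2-q)}$; a short check shows that $r\mapsto(r-1)r^{-(2-q)}$ is increasing for $q<2$, so the worst constant occurs at $r=\sigma$. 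This is exactly why a single $k_0$, independent of $r$, works for all $r>\sigma$ simultaneously, and it gives for $k\ge k_0$
\[
\tfrac{d}{dt}\int_\Omega |G_k(u)|^r\,dx+\tfrac{2\alpha(r-1)}{r}\int_\Omega \bigl|D[|G_k(u)|^{r/2}]\bigr|^2\,dx\le 0 .
\]

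Second, I would convert this into a closed ODE. Writing $v=|G_k(u)|^{r/2}$, Sobolev's inequality gives $\|Dv\|_{L^2}^2\ge c_S^{-1}\|v\|_{L^{2^*}}^2$, and Lebesgue interpolation $\|v\|_{L^2}\le\|v\|_{L^\mu}^{1-\lambda}\|v\|_{L^{2^*}}^{\lambda}$ with $\mu=2\sigma/r\le2$ identifies the lower-order norm as $\|v\|_{L^\mu}=\|G_k(u)\|_{L^\sigma}^{r/2}$. Using that $t\mapsto\|G_k(u(t))\|_{L^\sigma}$ is non-increasing for $k\ge k_0$ (again \cref{corexp}), hence bounded by $\|G_k(u_0)\|_{L^\sigma}$, I get a super-linear inequality $y'\le -C\,y^{1/\lambda}$ with $1/\lambda>1$ and $C$ proportional to $\|G_k(u_0)\|_{L^\sigma}^{-r(1-\lambda)/\lambda}$. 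Integrating $y'\le -Cy^p$ with $p>1$ yields the universal, initial-value–forgetting bound $y(t)\le((p-1)Ct)^{-1/(p-1)}$; computing the exponents gives $\tfrac1{p-1}=\tfrac{N(r-\sigma)}{2\sigma}$, and the $\|G_k(u_0)\|_{L^\sigma}$-factors collect into the exact power $\|G_k(u_0)\|_{L^\sigma}^r$, which is \rife{gkr}.

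Finally, \rife{decayu} follows by splitting $|u|\le k_0+|G_{k_0}(u)|$, bounding the truncated part by $k_0^r|\Omega|$ and the tail by \rife{gkr}; for $t\le t_0$ the constant term is absorbed into $t^{-N(r-\sigma)/(2\sigma)}$. For the continuity $u\in C^0((0,T];L^r(\Omega))$, I would interpolate $\|u(t)-u(s)\|_{L^r}\le\|u(t)-u(s)\|_{L^\sigma}^{\theta}\|u(t)-u(s)\|_{L^\rho}^{1-\theta}$ for a fixed $\rho>r$, using the global continuity in $L^\sigma$ built into \cref{def1} together with the locally-in-time $L^\rho$ bound just proved. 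I expect the main obstacle to be the first step: both the clean recombination in the H\"older estimate and, above all, verifying that the absorption threshold is uniform in $r$ (so that $k_0$ does not depend on $r$), together with the limit passage in the truncations justifying $|G_k(u)|^{r/2}\in L^2(0,T;H^1_0(\Omega))$, which is handled exactly as in Step 1 of \cref{lemGk}.
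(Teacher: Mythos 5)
Your overall strategy is the one the paper itself follows --- the same three-exponent H\"older estimate in which the surviving factor is $\|G_k(u)\|_{\elle\si}^{q-1}$ for \emph{every} $r>\si$, the same absorption via the smallness given by \cref{lemGk}, the same superlinear ODE with exponent $1+\frac{2\si}{N(r-\si)}$ and initial-value--forgetting bound, and the same arguments for \rife{decayu} and for the continuity (your interpolation between $\elle\si$ and $\elle\rho$ is an acceptable substitute for the paper's Vitali argument). Your exponent bookkeeping is correct, and your device for the $r$-independence of $k_0$ (monotonicity of $(r-1)r^{-(2-q)}$) is valid, though the paper avoids needing it by testing with the anti-derivatives $\int_0^{G_k(u)}|T_n(v)|^{r-2}\,dv$, whose gradient rebuilds the Sobolev term with no $r$-dependent constant. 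However, there is a genuine gap at the point you yourself flag as the main obstacle: the claim that the limit in $n$, yielding $|G_k(u)|^{r/2}\in L^2(0,T;H^1_0(\Omega))$ and the untruncated differential inequality for $\int_\Omega|G_k(u)|^r\,dx$, is ``handled exactly as in Step 1 of \cref{lemGk}''. It is not. In Step 1 of \cref{lemGk} the uniform-in-$n$ bound rests on the endpoint terms $\int_\Omega\Psi_n(u(t_i))\,dx\leq c\int_\Omega|u(t_i)|^{\si}\,dx$, which are finite uniformly in $n$ precisely because $u\in C^0([0,T];L^{\si}(\Omega))$. With the exponent $r>\si$ the corresponding endpoint terms behave like $\int_\Omega|T_n(G_k(u(t_i)))|^{r-2}|G_k(u(t_i))|^2\,dx$, and a bound uniform in $n$ would require $u(t_i)\in \elle r$ --- which is exactly the regularizing effect you are trying to prove. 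Removing the truncation before integrating the ODE is therefore circular: the finiteness of $\int_\Omega|G_k(u(t))|^r\,dx$ for $t>0$ must come \emph{out} of the ODE, it cannot go into it.

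This is why the paper's proof never removes $T_n$ until the very end: it derives, via \cref{propW11} (licit for each fixed $n$), the pointwise differential inequality for the truncated quantity $\int_\Omega\varTheta_n(G_k(u))\,dx$, closes it into the ODE \rife{distheta} by relating $\varTheta_n$ to the Sobolev term $\varPhi_n$ through the interpolation \rife{arg2}--\rife{arg3} (this replaces your clean inequality $\|v\|_{L^2}\le\|v\|_{L^\mu}^{1-\lambda}\|v\|_{L^{2^*}}^{\lambda}$, which is not available verbatim once the truncations are kept), integrates the ODE \emph{at fixed $n$} --- the resulting decay bound is uniform in $n$ because it forgets the initial value --- and only then lets $n\to\infty$. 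Your proposal can be repaired either by adopting this ordering, or by a bootstrap in $r$: since $u\in L^2_{loc}((0,T);H^1_0(\Omega))$, one has $u(t_1)\in \elle{2^*}$ for a.e.\ $t_1>0$, so your argument can be launched from such a $t_1$ when $r\le 2^*$ and iterated to reach all $r$, with the ODE bound making the estimate independent of $t_1$ in the end. As written, though, the first step of your proof does not go through.
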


\begin{remark}\label{k0}
Let us stress that the constant  in the estimate \rife{gkr} does not depend on the set $\Omega$. The value $k_0$ is given by  the smallness condition on  $\|G_k(u_0)\|_{\elle\si}$, hence it is uniform whenever $u_0$ varies in a  compact set of $\elle\si$.

The constant $c$ in \rife{decayu} depends on $u_0$ through its modulus of equi-integrability in $\elle\si$, in particular the constant can be chosen uniform if $u_0$ varies in a  compact subset of $\elle\si$.
\end{remark}
\proof
\noindent
{\it Step 1} \\
\noindent
We first prove the $G_k(u(t))$ decays in the $L^{r}(\Omega)$ norm, which in turn implies that $u\in L^{\infty}(0, T;L^{r}(\Omega))$.

Assume by now that $\si\geq 2$. For $r> \si $, we take $\Phi'(u)=\int_{0}^{G_k(u)}|T_n(v)|^{r-2}\,dv$ in \cref{propW11}, which is licit   for every fixed $n$. We have that
\begin{equation}\label{e}
\begin{array}{c}
\ds
\frac{\text{d}}{\text{d}s}\int_{\Omega}\biggl[\int_0^{G_k(u(s))}\biggl(\int_0^v|T_n(z)|^{r-2}\,dz\biggr)\,dv\biggr]\,dx+
\alpha
\int_{\Omega} | D G_k(u(s))|^2 |T_n(G_k(u(s)))|^{r-2}\,dx\\
[3mm]\ds
\le \gamma   \int_{\Omega} | D G_k(u(s))|^q \biggl(\int_{0}^{G_k(u(s))}|T_n(v)|^{r-2}\,dv\biggr)\,dx \quad\text{a.e.}\,\,s\in (0,T].
\end{array}
\end{equation}
We first estimate the test function itself by H\"older's inequality with indices $\left(\frac{1}{2-q},\frac{1}{q-1}\right)$, getting
\begin{align*}
\int_{0}^{G_k(u(s))}|T_n(v)|^{r-2}\,dv & \le |T_n(G_k(u(s)))|^{q(\frac{r}{2}-1)}\int_{0}^{G_k(u(s))}|T_n(v)|^{(\frac{r}{2}-1)(2-q)}\,dv\notag\\
&\le |T_n(G_k(u(s)))|^{q(\frac{r}{2}-1)}\biggl(\int_{0}^{G_k(u(s))}|T_n(v)|^{\frac{r}{2}-1}\,dv\biggr)^{2-q}|G_k(u(s))|^{q-1}.
\end{align*}
We thus estimate the r.h.s. of \eqref{e} as below
\beac\label{3H}
\ds\int_{\Omega} | D G_k(u(s))|^q \biggl(\int_{0}^{G_k(u(s))}|T_n(v)|^{r-2}\,dv\biggr)\,dx\\
[3mm]\ds\le \int_{\Omega} | D G_k(u(s))|^q |T_n(G_k(u(s)))|^{q(\frac{r}{2}-1)}\biggl(\int_{0}^{G_k(u(s))}|T_n(v)|^{\frac{r}{2}-1}\,dv\biggr)^{2-q}|G_k(u(s))|^{q-1}\,dx\\
[3mm]\ds\le \biggl(\int_{\Omega} | D G_k(u(s))|^2 |T_n(G_k(u(s)))|^{r-2}\,dx\biggr)^{\frac{q}{2}}\biggl(\int_{\Omega}\biggl(\int_{0}^{G_k(u(s))}|T_n(v)|^{\frac{r}{2}-1}\,dv\biggr)^{2^*}\,dx\biggr)^{\frac{2-q}{2^*}}\times\\
[3mm]\ds\times\biggl(\int_{\Omega}|G_k(u(s))|^{\si}\,dx\biggr)^\frac{2-q}{N}\\
[3mm]\ds
\le {c_S} \biggl(\int_{\Omega} | D G_k(u(s))|^2 |T_n(G_k(u(s)))|^{r-2}\,dx\biggr)\|G_k(u(s))\|_{L^{\si}(\Omega)}^{q-1},
\eeac
where we have also applied the H\"older inequality with three exponents $\left(\frac{2}{q},\frac{2^*}{2-q},\frac{N}{2-q}\right)$ and then Sobolev's embedding.
\\
Let $\de_0$ be given by \cref{lemGk}. Then we fix a value $\bar \de<\de_0$ so that $\alpha-{\gamma}{c_S}{\bar\de}^{\frac{q-1}{\si}}>0$, e.g. we take $\bar \de^{\frac{q-1}{\si}}< \min\left( \frac{\alpha}{2\gamma c_S}, \de_0^{\frac{q-1}{\si}}\right)$,   and we take $k_0$ such that $\|G_{ k}u_0\|_{L^{\si}(\Omega)}^{\si}<\bar \delta$ for every $k\ge k_0$. Without loss of generality we may also assume, from \cref{corexp}, that $\|G_k(u(s))\|_{L^{\si}(\Omega)}$ is nonincreasing in time for $k\ge k_0$. 
Since by \cref{lemGk} we have $\|G_k(u(t))\|_{L^{\si}(\Omega)}<\bar \de$ for every $t$, \rife{e}--\rife{3H} imply 
$$
\begin{array}{c}
\ds
\frac{\text{d}}{\text{d}s}\int_{\Omega}\biggl[\int_0^{G_k(u(s))}\biggl(\int_0^v|T_n(z)|^{r-2}\,dz\biggr)\,dv\biggr]\,dx+
\alpha
\int_{\Omega} | D G_k(u(s))|^2 |T_n(G_k(u(s)))|^{r-2}\,dx\\
[3mm]\ds
\le \gamma\, c_S\, \bar \de^{\frac{q-1}\si}  \int_{\Omega} | D G_k(u(s))|^2 |T_n(G_k(u(s)))|^{r-2}\,dx 
\\
[3mm]\ds
\le \frac\alpha 2 \int_{\Omega} | D G_k(u(s))|^2 |T_n(G_k(u(s)))|^{r-2}\,dx 
\end{array}
$$
due to the choice of $\bar\de$. Then subtracting the right-hand side and using Sobolev's inequality we deduce 
\begin{equation}\label{d}
\frac{\text{d}}{\text{d}s}\int_{\Omega}\biggl[\int_0^{G_k(u(s))}\biggl(\int_0^v|T_n(z)|^{r-2}\,dz\biggr)\,dv\biggr]\,dx+
\frac{\al}{2c_S}\biggl[\int_{\Omega} \biggl|\biggl( \int_0^{G_k(u(s))} |T_n(v)|^{\frac{r}{2}-1}\,dv \biggr)\biggr|^{2^*}\,dx\biggr]^{\frac{2}{2^*}}
\le 0
\end{equation}
for every $k\ge k_0$. We set
\[
\varPhi_n(v)=\int_0^v |T_n(z)|^{\frac{r}{2}-1}\,dz
\qquad \text{and} \qquad
\varTheta_n(v)=\int_0^v \biggl(\int_0^w|T_n(z)|^{r-2}\,dz\biggr)\,dw
\]
so that \eqref{d} reads as
\[
\frac{\text{d}}{\text{d}s}\int_{\Omega} \varTheta_n(G_k(u(s)))\,dx+c\biggl(\int_{\Omega}|\varPhi_n(G_k(u(s)))|^{2^*}\,dx\biggr)^{\frac{2}{2^*}}\le 0 \qquad \text{a.e.}\,\, s\in (0,T],\,\,\forall\,\,k\ge k_0.
\]
We show now  the link between $\varTheta_n(v)$ and $\varPhi_n(v)$, in order to recover a suitable differential inequality.
To this aim, we rewrite  $r= \frac{2^*}{2}r\omega+ \si(1-\omega)$, where $\omega=\frac{2r-2\si}{2^*r-2\si}$. Note that $0<\omega<1$ since $\si<r$. Thus
\begin{align}
\int_0^{w} |T_n(z)|^{r-2}\,dz&=\int_0^{w}|T_n(z)|^{(\frac{2^*r}{2}-1)\omega}|T_n(z)|^{(\si-1)(1-\omega)-1}\,dz\notag\\
&\le \biggl(\int_0^{w}|T_n(z)|^{\frac{2^*r}{2}-1}\,dz\biggr)^{\omega} \biggl(\int_0^{w}|T_n(z)|^{\si-1-\frac{1}{1-\omega}}\,dz\biggr)^{1-\omega}\notag \\
&\le c_r\biggl(\int_0^{w} |T_n(z)|^{\frac r2-1}\,dz\biggr)^{\omega 2^*}|w|^{\si(1-\omega)-1}\label{arg2},
\end{align}
where we used the inequality
\[
\int_0^w|T_n(z)|^{\frac{2^*r}{2}-1}\,dz\le c_r \biggl(\int_0^w |T_n(z)|^{\frac{r}{2}-1}\,dz\biggr)^{2^*}
\]
in \eqref{arg2}.  Here and below, $c_r$ denotes possibly different constants which may depend on $r$ but are independent of $n$. Then, \eqref{arg2}  leads to
\begin{align*}
\Theta_n(G_k(u(s)))&=\int_0^{G_k(u(s))}\int_0^w |T_n(z)|^{r-2}\,dz\,dw\\&\le c_r\int_0^{G_k(u(s))} |\varPhi_n(w)|^{2^*\omega}\, |w|^{\si(1-\omega)-1} \,dw\\
&\le c_r |\varPhi_n(G_k(u(s)))|^{2^*\omega} |G_k(u(s))|^{\si(1-\omega)}
\end{align*}
which implies
\begin{align}
\int_{\Omega}\varTheta_n(G_k(u(s)))\,dx&\le c_r\int_{\Omega}\biggl(|\varPhi_n(G_k(u(s)))|^{2^*\omega}|G_k(u(s))|^{\si(1-\omega)}\biggr)\,dx\notag\\
&\le c_r \biggl(\int_{\Omega}|\varPhi_n(G_k(u(s)))|^{2^*}\,dx\biggr)^{\omega}\|G_k(u(s))\|_{L^{\si}(\Omega)}^{\si(1-\omega)}\label{arg3}\\
&\le c_r \biggl(\int_{\Omega}|\varPhi_n(G_k(u(s)))|^{2^*}\,dx\biggr)^{\omega}\|G_k(u_0)\|_{L^{\si}(\Omega)}^{\si(1-\omega)}\notag %\label{eq:p0}
\end{align}
where we used   the nondecreasing character of  $\|G_k(u(s))\|_{L^{\si}(\Omega)}$  for $k\ge k_0$.\\
We summarize the previous steps as follows:
\begin{equation}\label{distheta}
\frac{\text{d}}{\text{d}s}\int_{\Omega} \varTheta_n(G_k(u(s)))\,dx+c_r\bigg[
\frac{\bigl(\int_{\Omega}\varTheta_n(G_k(u(s)))\,dx\bigr)}{\|G_k(u_0)\|_{L^{\si}(\Omega)}^{\si(1-\omega)}}\biggr]^{\frac{2}{2^*\omega}}\le 0
\end{equation}
for a.e. $s\in (0,T]$ and for all $k\ge k_0$.\\
Setting 
\[
y(s)=\int_{\Omega} \varTheta_n(G_k(u(s)))\,dx\quad\text{and}\quad
C_0=\|G_k(u_0)\|_{L^{\si}(\Omega)}^{\frac{2\si(1-\omega)}{2^*\omega}}
\]
we rewrite \eqref{distheta} in the equivalent  form
\[
y(s)'+\frac{c_r}{C_0}y(s)^{1+\rho}\le 0
\]
where the exponent $\ro$ is given by $\rho=\frac{2\si}{N(r-\si)}$. Let us recall that $y\in W^{1,1}(0,T)$ by \cref{propW11}. Thus, we integrate in the time variable  and we get (see e.g. \cite[Lemma 2.6]{P2})
\[
\int_{\Omega} \varTheta_n(G_k(u(t)))\,dx\le \frac{c_r}{t^{\frac{1}{\rho}}}\|G_k(u_0)\|_{L^{\si}(\Omega)}^{\frac{2\si(1-\omega)}{2^*\rho\omega}}
\]
for every $n$ and for every $0< t\le T$. We conclude letting $n$ go to infinity and using the value of $\omega$:
\[
\|G_k(u(t))\|_{L^{r}(\Omega)}^{r}\le c_r\frac{\|G_k(u_0)\|_{L^{\si}(\Omega)}^{r}}{t^{N\frac{(r-\si)}{2\si}}}\quad\forall\,\,t\in(0,T),\quad\forall \,\, k\ge k_0.
\]
In particular, we have proved that
\[
\|G_k(u(t))\|_{L^{r}(\Omega)}^{r}\le  \frac{c_r \de_0^{\frac{r}{\si}}}{t^{N\frac{(r-\si)}{2\si}}}\quad\forall\,\,t\in(0,T),\quad\forall \,\, k\ge k_0.
\]
Let us spend a  few words for the case that  $\si<2 $.   First of all, in this case it is enough to restrict the analysis  to $\si<r\leq 2$ since for larger exponents $r$ one can proceed as before.
Then, for $\si<r\leq 2$, we take $\Phi(u)=\int_{0}^{G_k(u)}(\vep+ |v|)^{r-2}\,dv$, we follow the previous steps and we obtain the same conclusion by letting $\vep\to 0$ (a similar argument is also detailed 
in \cref{renreg} in the \cref{app}).
\\

\noindent
{\it Step 2} \\
\noindent
The decay result written above and the decomposition $u=G_k(u)+T_k(u)$ provide us with the inequality 
\[
\|u(t)\|_{L^{r}(\Omega)}^{r}\le \frac{c_r\de_0^{\frac{r}{\si}}}{t^{N\frac{(r-\si)}{2\si}}}+k_0^{r}|\Omega|\quad\forall\,\,t\in(0,T),\quad\forall \,\, k\ge k_0
\]
which, for $t\leq t_0$, gives the following decay
\[
\|u(t)\|_{L^{r}(\Omega)}^{r}\le c\frac{\de_0^{\frac{r}{\si}}}{t^{N\frac{(r-\si)}{2\si}}}\quad\forall\,\,t\in(0,t_0)\quad\forall \,\, k\ge k_0
\]
where $c=c(r,k_0, |\Omega|,t_0)$.\\

\noindent
{\it Step 3} \\
\noindent
We conclude by showing that $u\in C^0((0,T];L^{r}(\Omega))$. Indeed, we already know that the  convergence $u(t_n)\to u(t)$ for $t_n\underset{n\to \infty}{\longrightarrow} t$ holds in $\elle{\si}$. Thanks to \rife{gkr}, it is easy to see that $u(t_n)$ is equi-integrable in $\elle{r}$;  we thus deduce the desired continuity regularity by Vitali's Theorem.
\endproof
\vskip0.3em

\begin{remark}
Note that the inequality \eqref{d} and the regularity $u\in L^{\infty}(0,T;L^{r}(\Omega))$ ensure $u\in L^{r}(0,T;L^{\frac{2^*r}{2}}(\Omega))$. Hence, we reason as in \cref{propW11} and deduce that $\int_\Omega |u(t)|^{r}\,dx\in W^{1,1}(0,T)$.
\end{remark}

\vskip1em

The next step  consists in showing  a similar regularizing effect in the $L^\infty$ norm. To this purpose, we simply observe  a   similarity  between \eqref{P}  and the power problem with linear growth:
\begin{equation}\label{Ppot}\tag{$P_{\text{pow}}$}
\begin{cases}
\begin{array}{ll}
\ds u_t- \text{div}(a(t,x,u, D u)) =\varPsi(t,x)u  &\ds \text{in}\ Q_T,\\
\ds u=0   &\ds \text{on}\ (0,T)\times \partial\Omega,\\
\ds u(0,x)=u_0(x)  &\ds  \text{in}\  \Omega,
\end{array}
\end{cases}
\end{equation}
where $\varPsi \in L^{\infty}(0,T;L^{\beta}(\Omega))$ for $\beta\ge \frac{N}{2}$. 
%We refer to \cite{P2} for a detailed study of (even more general) problems with lower order terms of power type. \\

In fact, under conditions \rife{A1}, \rife{H}, equation \eqref{P} implies 
\besac
\ds\frac1{r}\frac{d}{dt} \int_{\Omega} |  u(t)|^{r}\,dx +
\alpha (r-1)
\int_{\Omega} | D  u(t)|^2 |  u(t)|^{r-2}\,dx
\le \gamma   \int_{\Omega} | D  u(t)|^q |  u(t)|^{r-1}\,dx 
\eesac
a.e. $t\in (0,T]$. Furthermore, since by Young's inequality and using $r\geq 2$ we have 
\besac
\ds\gamma   \int_{\Omega} | D  u(t)|^q |  u(t)|^{r-1}\,dx  \\
[3mm]\ds\leq \frac\alpha 2
\int_{\Omega} | D  u(t)|^2 |  u(t)|^{r-2}\,dx + c_{\gamma,\alpha} \into |u|^{r}\, |u|^{(q-1)\frac 2{2-q}}\, dx 
\\
[3mm]\ds
\le\frac{\alpha(r-1) } 2 \int_{\Omega} | D  u(t)|^2 |  u(t)|^{r-2}\,dx + c_{\gamma,\alpha} \into |u|^{r}\, |u|^{(q-1)\frac 2{2-q}}\, dx \,,
\eesac
we deduce that
\begin{equation}\label{2r}
\frac1{r}\frac{d}{dt} \int_{\Omega} |  u(t)|^{r}\,dx +
\frac{\alpha (r-1)}2
\int_{\Omega} | D  u(t)|^2 |  u(t)|^{r-2}\,dx\leq c_{\gamma,\alpha} \into |u|^{r}|u|^{\frac{2\si}{N}}\,dx\,,
\end{equation}
where we used $\frac{2(q-1)}{2-q}= \frac{2\si}N$.

The same kind of inequality clearly holds for \eqref{Ppot}, where we have directly
$$
\frac1{r}\frac{d}{dt} \int_{\Omega} |  u(t)|^{r}\,dx +
\alpha (r-1)
\int_{\Omega} | D  u(t)|^2  |u(t)|^{r-2}\,dx\leq   \into \, |\varPsi|\, |u|^{r}\,dx\,.
$$
In other words, problem \rife{P} shares some energy estimates with the linear problem \rife{Ppot} whenever $|u|^{\frac{2\si}{N}}$ plays the role of the function $\varPsi(\cdot)$.  We now  observe  that $|u|^{\frac{2\si}{N}}\in \limitate \beta $ for some $\beta>\frac N2$ as soon as $u\in \limitate r$ with $r>\si$.

\vskip0.4em
Problem \rife{Ppot} was  studied in detail in \cite{P2} as a preliminary tool for parabolic problems with superlinear powers as \rife{power}. By means of the above remark, we can use directly those results to conclude with our regularizing effects. 
More precisely, we can apply the following Lemma, which is nothing but  a consequence of  a  classical Moser iteration method. 
We won't give the proof of the result below since it is contained\footnote{We warn the reader that this result is not stated as we do here  in \cite{P2}. However, the   inequality \rife{preMoser} corresponds to (2.27) in the given reference, which is the starting point of the iteration argument leading to the $L^\infty$ bound. It makes no difference whether \rife{preMoser} is required as an assumption - as we state here - or whether it is proved to be a preliminary property of solutions - as in \cite[Proposition $2.7$]{P2}} in \cite[Proposition $2.7$]{P2}

\begin{lemma}\label{moser} Let $v\in \paraccaloc\cap \continue m$  satisfy $v\in L^{r}_{\text{\rm loc}}(0,T;$ $  \elle r)$ for every
$r>1$ and, for every $s>m$, 
\be\label{preMoser}
\frac{d}{dt} \into |v|^{s}\,dx  +\Big( \into
|v|^{s\frac{N}{N-2}}\,dx\Big) ^{\frac {N-2}{N}}
\leq \kappa \,s\,\into |\psi|\,|v|^{s}\,dx\,,\qquad t\in (0,T)\,,
\ee
for some $\psi\in \limitate\beta$ with $\beta>\frac N2$, and some $\kappa$ independent of $s$ and $v$. Then we have $v(t)\in \elle\infty$ and 
$$
\|v(t)\|_{\elle\infty}
\leq Ce^{Ct\|\psi\|_{\limitate\beta}^{\frac{2\beta}{2\beta-N}}}\,t^{-
\frac N{2m}}\,\|v_0\|_{\elle m}\,,
$$
for every $t$ in $(0,T]$, where $C= C(m,\beta, N, \kappa)$. 
\end{lemma}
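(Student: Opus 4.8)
The plan is to run a standard Moser iteration, so the first step is to put \rife{preMoser} in a ``heat equation with potential'' form. Setting $w=|v|^{s/2}$ and writing $2^*=\frac{2N}{N-2}$, one has $\into|v|^s\,dx=\|w\|_{\elle2}^2$ and $\big(\into|v|^{s\frac N{N-2}}\,dx\big)^{\frac{N-2}N}=\|w\|_{\elle{2^*}}^2$, so \rife{preMoser} reads $\frac{d}{dt}\|w\|_{\elle2}^2+\|w\|_{\elle{2^*}}^2\le\kappa s\into|\psi|\,w^2\,dx$. On the right I would use H\"older with exponent $\beta$ and then interpolate $\|w\|_{\elle{2\beta'}}$, $\beta'=\frac{\beta}{\beta-1}$, between $\elle2$ and $\elle{2^*}$: since $\beta>\frac N2$ we have $2<2\beta'<2^*$ and the interpolation exponent is $\theta=\frac N{2\beta}\in(0,1)$. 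Young's inequality with conjugate exponents $(\frac1\theta,\frac1{1-\theta})$ then absorbs the $\elle{2^*}$ factor into the good term on the left, leaving
\[
\frac{d}{dt}\into|v|^s\,dx+\frac12\Big(\into|v|^{s\frac N{N-2}}\,dx\Big)^{\frac{N-2}N}\le C(\kappa s)^{p}\,\|\psi(t)\|_{\elle\beta}^{p}\into|v|^s\,dx,
\]
with $p:=\frac{2\beta}{2\beta-N}$; I call this inequality $(\star)$. It is worth noticing that the Young exponent $\frac1{1-\theta}$ equals precisely the $p$ of the statement; I abbreviate $\Lambda:=\|\psi\|_{\limitate\beta}^{p}$.

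The second ingredient is the gain of integrability. By the parabolic Gagliardo--Nirenberg (Ladyzhenskaya) embedding $\limitate2\cap L^2((0,T);\elle{2^*})\hookrightarrow \parelle{2\frac{N+2}N}$, applied to $w=|v|^{s/2}$, one gets, with $\mu:=\frac{N+2}N$,
\[
\int_\tau^T\into|v|^{s\mu}\,dx\,dt\le C\Big(\sup_{t>\tau}\into|v|^s\,dx\Big)^{2/N}\int_\tau^T\Big(\into|v|^{s\frac N{N-2}}\,dx\Big)^{\frac{N-2}N}dt.
\]
I then fix $t\in(0,T]$ and set $t_n=t(1-2^{-n})\uparrow t$, $s_n=m\mu^n\to\infty$, and $I_n:=\int_{t_n}^t\into|v|^{s_n}\,dx\,d\tau$. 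Integrating $(\star)$ with $s=s_n$ from a point of $(t_n,t_{n+1})$ to a point of $(t_{n+1},t)$, averaging in the first endpoint (which yields the factor $\frac1{t_{n+1}-t_n}=\frac{2^{n+1}}t$) and taking the sup in the second, I control both $\sup_{(t_{n+1},t)}\into|v|^{s_n}$ and $\int_{t_{n+1}}^t\big(\into|v|^{s_n\frac N{N-2}}\,dx\big)^{\frac{N-2}N}$ by $2D_nI_n$, with $D_n:=\frac{2^{n+1}}t+C(\kappa s_n)^p\Lambda$. Inserting this into the gain inequality produces the recursion $I_{n+1}\le(2D_n)^{\mu}I_n^{\mu}$.

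Taking $1/s_n$ powers telescopes this into $I_n^{1/s_n}\le I_0^{1/m}\prod_{j=0}^{n-1}(2D_j)^{1/s_j}$. The base level is treated by Gronwall applied to $(\star)$ with $s=m$, giving $\sup_{\tau\le t}\|v(\tau)\|_{\elle m}^m\le e^{C(\kappa m)^p\Lambda t}\|v_0\|_{\elle m}^m$ and hence $I_0^{1/m}\le t^{1/m}e^{Cm^{p-1}\Lambda t}\|v_0\|_{\elle m}$; this is where $\|v_0\|_{\elle m}$ and the bulk of the exponential factor enter. For the product, $\log(2D_j)$ grows only linearly in $j$ (both summands of $D_j$ grow geometrically), so $\sum_j s_j^{-1}\log(2D_j)$ converges. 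Factoring $D_j=\frac{2^{j+1}}t\big(1+tC(\kappa s_j)^p\Lambda\,2^{-j-1}\big)$, the $-\log t$ contribution sums, via $\sum_{j\ge0}\mu^{-j}=\frac{N+2}2$, to the power $t^{-\frac{N+2}{2m}}$; combined with the $t^{1/m}$ from $I_0$ this gives exactly $t^{-\frac N{2m}}$, while the remaining logarithms sum to at most a power of $(1+t\Lambda)$, which is absorbed into $e^{Ct\Lambda}$. Letting $n\to\infty$ yields the asserted bound.

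The routine calculations are harmless; the main obstacle is the final bookkeeping, namely checking that the geometric accumulation of the constants $D_j$ collapses to exactly $t^{-N/(2m)}e^{Ct\Lambda}$ with $\Lambda=\|\psi\|_{\limitate\beta}^{2\beta/(2\beta-N)}$, and rigorously identifying $\lim_n I_n^{1/s_n}$ with the pointwise value $\|v(t)\|_{\elle\infty}$. For the latter one keeps, alongside $I_n$, the companion bound on $\sup_{(t_{n+1},t)}\into|v|^{s_n}$ obtained in the recursion step: since $t$ lies in every interval $(t_{n+1},t]$, passing to the limit in $\big(\sup_{(t_{n+1},t)}\into|v|^{s_n}\,dx\big)^{1/s_n}$ recovers $\|v(t)\|_{\elle\infty}$. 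All structural steps --- the interpolation/absorption giving $(\star)$, the gain $\mu=\frac{N+2}N$, and the telescoping --- are standard; keeping the dependence on $s$, $t$ and $\|\psi\|_{\limitate\beta}$ explicit throughout is the delicate point, and is exactly what \cite[Proposition~2.7]{P2} carries out.
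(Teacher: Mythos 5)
Correct, and it is the same argument the paper relies on: the paper gives no proof of \cref{moser} at all, deferring (see its footnote) to \cite[Proposition 2.7]{P2}, which is precisely the classical Moser iteration you reconstruct --- absorption of the potential term via H\"older, interpolation and Young with exponent $\frac{2\beta}{2\beta-N}$, the parabolic Gagliardo--Nirenberg gain $\mu=\frac{N+2}{N}$, and the dyadic-in-time iteration, with all your exponents checking out (in particular $\theta=\frac{N}{2\beta}$, the recursion $I_{n+1}\le(2D_n)^\mu I_n^\mu$, and $t^{1/m}\cdot t^{-\frac{N+2}{2m}}=t^{-\frac N{2m}}$). The only point to tidy is the base level: \rife{preMoser} is assumed only for $s>m$, so both your Gronwall step at $s=m$ and the first recursion step (which also invokes $(\star)$ with $s_0=m$) should be justified by integrating $(\star)$ for $s>m$ over $(\tau_1,\tau_2)$ with $\tau_1>0$ and then letting $s\downarrow m$ and $\tau_1\to 0$, which is licit since $v\in\continue m\cap L^{r}_{\text{loc}}(0,T;\elle r)$.
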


In virtue of the above Lemma, the Corollary below immediately follows from \cref{Prop2r}.

\begin{corollary}\label{bd}
Assume \eqref{A1},  \eqref{H} with $2-\frac{N}{N+1}< q<2$, \eqref{ID1} and let $u$ be a solution of \eqref{P}. Then $u$ is bounded in $L^{\infty}((\tau,T)\times\Omega)$, $\tau>0$. Moreover, if $k_0$ is the value given by \cref{Prop2r}, we have
\be\label{gkdecay}
\|G_k(u(t))\|_{L^{\infty}(\Omega)} \le C\frac{\|G_k(u_0)\|_{L^{\sigma}(\Omega)}}{t^{\frac N{2\si}}}\quad\forall\,\,t\in(0,T),\,\,\forall \,k\ge k_0
\ee
where $C=C(q,\alpha, \gamma, N)$.

Consequently, we have
\be\label{decayuinf}
\|u(t)\|_{L^{\infty}(\Omega)} \le \frac c{t^{\frac N{2\si}}}\quad\forall\,\,t\in(0,t_0),
\ee
where $c=c(q, \alpha,\gamma,N, t_0,u_0)$ is a constant which remains bounded when $u_0$ varies in a  compact set of $\elle\sigma$.
\end{corollary}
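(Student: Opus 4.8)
The plan is to deduce everything from the Moser Lemma \ref{moser} applied to the truncation $v=G_k(u)$ with $m=\sigma$. The discussion preceding \cref{moser} already shows that \eqref{P} yields the differential inequality \eqref{2r}, which after multiplication by $r$ and the identity $\int_\Omega|Du|^2|u|^{r-2}\,dx=\frac{4}{r^2}\int_\Omega|D(|u|^{r/2})|^2\,dx$ followed by Sobolev's inequality takes exactly the form \eqref{preMoser} with potential $\psi=|u|^{\frac{2\sigma}{N}}$ and a constant $\kappa$ independent of $s$. First I would run the same computation on $G_k(u)$, using its subsolution character (cf. \cref{rmksubsol}, since $|DG_k(u)|^q=|Du|^q\chi_{\{|u|>k\}}$), which produces the analogous inequality for $v=G_k(u)$ with potential $\psi_k=|G_k(u)|^{\frac{2\sigma}{N}}$. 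The choice $m=\sigma$, $v_0=G_k(u_0)$ is forced by the fact that the target estimate \eqref{gkdecay} displays precisely the exponent $t^{-N/(2\sigma)}$ and the $\elle\sigma$–norm of the datum.

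Next I would verify the integrability required of the potential. The Lemma asks for $\psi_k\in L^\infty((0,T);\elle\beta)$ with some $\beta>\frac N2$. I would fix such a $\beta$ and set $r_*:=\frac{2\sigma\beta}{N}>\sigma$, so that $\|\psi_k(s)\|_{\elle\beta}=\|G_k(u(s))\|_{\elle{r_*}}^{2\sigma/N}$. The regularizing effect \eqref{gkr} of \cref{Prop2r} then gives, for $k\ge k_0$ and all $s\in(0,T)$,
\[
\|\psi_k(s)\|_{\elle\beta}\le c\,\|G_k(u_0)\|_{\elle\sigma}^{2\sigma/N}\,s^{-\frac{r_*-\sigma}{r_*}}.
\]
Here lies \emph{the main obstacle}: this bound blows up as $s\to0$, so $\psi_k\notin L^\infty((0,t);\elle\beta)$ and the Moser Lemma cannot be invoked directly on $(0,t)$; the naive application simply does not see the correct $t^{-N/(2\sigma)}$ rate.

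To circumvent this I would apply \cref{moser} on the \emph{shifted} interval $(t/2,t)$, with starting datum $G_k(u(t/2))$. On this interval $s\ge t/2$, hence $\|\psi_k\|_{L^\infty((t/2,t);\elle\beta)}\le c\,t^{-\frac{r_*-\sigma}{r_*}}$ is finite, and the Lemma yields
\[
\|G_k(u(t))\|_{L^\infty(\Omega)}\le C\,e^{C(t/2)\|\psi_k\|_{L^\infty((t/2,t);\elle\beta)}^{2\beta/(2\beta-N)}}\,\Big(\tfrac t2\Big)^{-\frac{N}{2\sigma}}\,\|G_k(u(t/2))\|_{\elle\sigma}.
\]
The decisive observation, which I expect to be the heart of the argument, is that the exponential factor is bounded uniformly in $t$: the exponent is controlled by $t\cdot t^{-\frac{r_*-\sigma}{r_*}\cdot\frac{2\beta}{2\beta-N}}=t^{\,0}$, because the scaling identity $\frac{r_*-\sigma}{r_*}\cdot\frac{2\beta}{2\beta-N}=1$ holds for every admissible $\beta$ (it expresses the criticality of the exponent $\sigma$). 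Since for $k\ge k_0$ the map $t\mapsto\|G_k(u(t))\|_{\elle\sigma}$ is nonincreasing by \cref{corexp}, one has $\|G_k(u(t/2))\|_{\elle\sigma}\le\|G_k(u_0)\|_{\elle\sigma}$, and \eqref{gkdecay} follows.

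Finally I would read off the remaining assertions. Writing $u=G_{k_0}(u)+T_{k_0}(u)$ and using $\|T_{k_0}(u(t))\|_{L^\infty(\Omega)}\le k_0$ together with \eqref{gkdecay} gives $\|u(t)\|_{L^\infty(\Omega)}\le C\,\|G_{k_0}(u_0)\|_{\elle\sigma}\,t^{-N/(2\sigma)}+k_0$; for $t\le t_0$ the first term dominates, which is \eqref{decayuinf}, while for $t\ge\tau>0$ both terms are bounded, giving $u\in L^\infty((\tau,T)\times\Omega)$. The uniformity of the constants when $u_0$ ranges over a compact subset of $\elle\sigma$ is inherited from the corresponding uniformity of $k_0$ and of the constants in \cref{Prop2r}. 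In summary, the only genuinely delicate step is the scaling bookkeeping of the third paragraph: one must take $m=\sigma$ on an interval of length comparable to $t$ and check that the critical identity keeps the exponential factor bounded, a point that a direct application of \cref{moser} on $(0,t)$ would miss.
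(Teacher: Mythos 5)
Your proposal is correct and follows essentially the same route as the paper: the paper also derives \eqref{preMoser} for $v=G_k(u)$ with $\psi=|G_k(u)|^{\frac{2\si}{N}}$, applies \cref{moser} with $m=\si$ and $\beta=\frac{Nr_0}{2\si}$ on the shifted interval $(t/2,t)$, uses \cref{Prop2r} to get $\|\psi\|_{L^\infty((t/2,t);\elle\beta)}^{\frac{2\beta}{2\beta-N}}\le C/t$ (your critical scaling identity), and invokes the monotonicity from \cref{corexp} to replace $\|G_k(u(t/2))\|_{\elle\si}$ by $\|G_k(u_0)\|_{\elle\si}$, concluding with the same decomposition $u=G_{k_0}(u)+T_{k_0}(u)$.
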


\proof
From \cref{Prop2r} we know that $u\in C((0,T];L^r(\Omega)$ for every $r<\infty$, which implies that the conclusion of  \cref{propW11} holds for functions $\Phi(u)$ having any possible polynomial growth. In particular, using $\Phi(u)= |G_k(u)|^{r-1}$ we obtain
\besac
\ds\frac1{r}\frac{d}{dt} \int_{\Omega} | G_k(u(t))|^{r}\,dx +
\alpha (r-1)
\int_{\Omega} | D G_k(u(t))|^2 | G_k(u(t))|^{r-2}\,dx
\\
[3mm]\ds
\le \gamma   \int_{\Omega} | D G_k(u(t))|^q | G_k(u(t))|^{r-1}\,dx \quad\text{a.e.}\,\,t\in (0,T],
\eesac
which yields, in the same way as we derived for \rife{2r}, 
\besac
\ds\frac{d}{dt} \frac1{r}\int_{\Omega} | G_k(u(t))|^{r}\,dx   +
\frac{\alpha (r-1)}{2}
\int_{\Omega} | D G_k(u(t))|^2 | G_k(u(t))|^{r-2}\,dx
\\
[3mm]\ds
\leq c_{\gamma,\alpha} \into |G_k(u)|^{r}|G_k(u)|^{\frac{2\si}{N}}\,dx\,.
\eesac
Using Sobolev's inequality in the left-hand side we obtain
$$
\frac{d}{dt}\frac1{r}\int_{\Omega} | G_k(u(t))|^{r}\,dx  +
\frac{\alpha 2(r-1)}{r^2c_S} \left( \into |G_k(u)|^{ \frac{2^*r}{2}}\,dx\right)^{\frac2{2^*}}
\leq C_{\gamma,\alpha} \into |G_k(u)|^{r}|G_k(u)|^{\frac{2\si}{N}}\,dx\,.
$$
Therefore, the function $v= G_k(u)$ satisfies \rife{preMoser} and we can apply \cref{moser} with $v= G_k(u)$, $m=\si$, $\psi=  |G_k(u)|^{\frac{2\si}{N}}$ and $\beta=\frac {Nr_0}{2\si}$ for some $r_0>\si$. We apply this estimate in the time interval $(\frac t2,t)$ for $k\geq k_0$; using \cref{Prop2r}, we have
$$
\|\psi(s)\|_{\elle\beta)}^{\frac{2\beta}{2\beta-N}}= \|G_k(u(s))\|_{\elle{r_0}}^{\frac{2\sigma r_0}{N(r_0-\si)}} \leq C \, \frac{\|G_k(u_0)\|_{\elle{\sigma}}^{\frac{2\si r_0}{N(r_0-\si)}}}{s} 
$$
hence, if $s\in (\frac t2,t)$
$$
\|\psi\|_{L^\infty(t/2,t);\elle\beta)}^{\frac{2\beta}{2\beta-N}}\leq \frac C t
$$
where $C$ may be chosen only depending on $q,N, \alpha, \gamma$ (e.g. we use $r_0= \si+1$ and $k_0$ satisfies $\|G_k(u_0)\|_{\elle{\sigma}}\leq \bar \de_0$ for some $\bar \de_0$ only depending on $q,N, \alpha, \gamma$).

Using \cref{corexp}  we conclude that
\begin{align*}
\|G_k(u(t))\|_{\elle\infty} &\leq C\, e^{Ct\|\psi\|_{L^\infty((t/2,t);\elle\beta)}^{\frac{2\beta}{2\beta-N}}} t^{-\frac N{2\sigma}}\|G_k(u(t/2))\|_{\elle\si} 
\\ & \leq C\,t^{-\frac N{2\sigma}}\|G_k(u(t/2))\|_{\elle\si}
\\
& \leq C\,t^{-\frac N{2\sigma}}\|G_k(u_0)\|_{\elle\si}\,.
\end{align*}
Finally, \rife{decayuinf} is obtained by the obvious inequality
\begin{align*}
\|u(t)\|_{\elle\infty} & \leq  \|G_{k_0}(u(t))\|_{\elle\infty} + k_0 
\\
& \leq \|G_{k_0}(u(t))\|_{\elle\infty} + k_0\,\left( \frac {t_0}t\right)^{\frac N{2\sigma}}\qquad \forall \, t\leq t_0\,.
\end{align*}
\endproof

We observe that estimate \rife{gkdecay} contains the same decay  of the $L^\infty$-norm as for the heat equation or, similarly, for nonlinear coercive operators. Indeed, this estimate could have been as well obtained through the nonlinear methods used in \cite{Po}, rather than using \cref{moser}.  
%An approach of this type is developed, for the $p$-Laplace parabolic operator, in \cite{Ma2}.  

\subsection{Asymptotic behaviour }

This Subsection is devoted to the analysis of the behaviour of the solution for large times.\\

\begin{proposition}\label{propdec}
Assume \eqref{A1},   \eqref{H} with $2-\frac{N}{N+1}< q<2$ and \eqref{ID1}. Let $u$ be a solution of \eqref{P}.  Then, we have
\[
\lim_{t\to\infty}\|u(t)\|_{L^{\infty}(\Omega)}=0.
\]
\end{proposition}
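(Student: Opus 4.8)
The plan is to prove that $\|u(t)\|_{L^\si(\Omega)}\to0$ as $t\to\infty$ and then to bootstrap this to the $L^\infty$-norm through the regularizing effect already established. First I would record two structural facts. By \cref{bd} the solution is bounded for $t\ge\tau>0$, and applying the $L^\infty$-contraction \cref{corinfty} to the problem restarted at time $\tau$ shows that $t\mapsto\|u(t)\|_{L^\infty(\Omega)}$ is non-increasing on $[\tau,\infty)$; hence $L:=\lim_{t\to\infty}\|u(t)\|_{L^\infty(\Omega)}$ exists. Moreover, the concluding step would then be immediate: if $\|u(s)\|_{L^\si(\Omega)}^\si$ ever drops below the threshold $\delta_0$ of \cref{lemGk}, then \cref{corexp} applies with $k_0=0$ and yields exponential decay of $\|u(t)\|_{L^\si}$ for $t\ge s$; restarting the estimate \rife{gkdecay} of \cref{bd} on the interval $(t/2,t)$, now with vanishing threshold, gives $\|u(t)\|_{L^\infty}\le C\,\|u(t/2)\|_{L^\si}/(t/2)^{N/(2\si)}\to0$.

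To reach such a smallness I would use the splitting $u=G_k(u)+T_k(u)$ anticipated in the introduction. For $k\ge k_0$, \cref{corexp} gives $\|G_k(u(t))\|_{L^\si(\Omega)}\to0$ exponentially, and feeding this into \rife{gkdecay} restarted on $(t/2,t)$ produces $\|G_k(u(t))\|_{L^\infty(\Omega)}\to0$. Consequently $\limsup_{t\to\infty}\|u(t)\|_{L^\infty(\Omega)}\le k_0$, and since the super-level energy vanishes we also obtain $\int_{\{|u(t)|>k_0\}}|u(t)|^\si\,dx\to0$ together with $|\{|u(t)|>k_0\}|\to0$. Thus the only possible obstruction to $\|u(t)\|_{L^\si}\to0$ is a persistent mass carried by the bounded bulk $T_{k_0}(u)$.

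Testing the equation with $|u|^{\si-2}u$ exactly as in \cref{corexp}, but now on the whole solution, gives, with $E(t)=\frac1\si\|u(t)\|_{L^\si}^\si$,
\be
E'(t)+\frac{4\alpha(\si-1)}{\si^2}\int_\Omega|D(|u|^{\frac{\si}{2}})|^2\,dx\le \frac{\gamma\,2^q c_S}{\si^q}\Big(\int_\Omega|D(|u|^{\frac{\si}{2}})|^2\,dx\Big)\Big(\int_\Omega|u|^\si\,dx\Big)^{\frac{2-q}{N}},
\ee
so that $E$ decays as soon as $\int_\Omega|u|^\si<\delta_0$. The main obstacle is precisely that the previous step only delivers $\limsup\|u\|_{L^\infty}\le k_0$, while the coefficient $\big(\int_\Omega|u|^\si\big)^{(2-q)/N}$ need not be below the contraction threshold: the a priori bounds do not by themselves force the bounded bulk to become small.

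To remove this obstruction I would argue by contradiction through a compactness/translation argument. Assuming $L>0$, I consider the time-shifts $u_n(t,\cdot):=u(t_n+t,\cdot)$ on $[0,1]\times\Omega$ with $t_n\to\infty$. By \cref{sap} and \cref{bd} the family $\{u_n\}$ is bounded in $L^\infty((0,1)\times\Omega)\cap L^2(0,1;H^1_0(\Omega))$ with $\partial_t u_n$ bounded in a suitable dual space, so by the Aubin--Simon theorem a subsequence converges strongly in $L^2$ and a.e. to a limit $\bar u$, which solves a problem of the same type \rife{A1}, \rife{H} (the delicate point being the strong convergence of the gradients, obtained by the standard monotonicity/compactness arguments for this class of equations). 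By \cref{corexp} one has $|\bar u|\le k_0$, whereas the monotonicity of $\|u(t)\|_{L^\infty}$ together with the integrated dissipation force the dissipation of $\bar u$ on $[0,1]$ to vanish; then $D(|\bar u|^{\frac{\si}{2}})=0$, so $\bar u\equiv0$ by the homogeneous boundary condition, contradicting $L>0$. This yields $\|u(t)\|_{L^\si}\to0$, and the bootstrap of the first paragraph concludes that $\|u(t)\|_{L^\infty}\to0$.
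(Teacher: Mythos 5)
Your first two paragraphs are sound and consistent with the paper's own strategy (monotonicity of $\|u(t)\|_{L^\infty(\Omega)}$ via \cref{corinfty}, decay of $\|G_k(u(t))\|$ for a \emph{fixed} $k\ge k_0$ via \cref{corexp} and \rife{gkdecay}, hence $\limsup_{t\to\infty}\|u(t)\|_{L^\infty(\Omega)}\le k_0$), and your third paragraph correctly identifies the remaining obstruction. The gap is in the fourth paragraph, and it is genuine. The pivotal claim there is that ``the monotonicity of $\|u(t)\|_{L^\infty}$ together with the integrated dissipation force the dissipation of $\bar u$ on $[0,1]$ to vanish.'' No such integrated dissipation bound is available: the only monotone quantity in this problem is the sup norm, which carries no dissipation identity, and the $L^\si$-energy inequality you displayed is precisely the one whose right-hand coefficient $\bigl(\int_\Omega|u|^\si\,dx\bigr)^{(2-q)/N}$ need not be below threshold, so it does not yield $E'+c\,\mathcal D\le 0$ and hence no finiteness of $\int_\tau^\infty\int_\Omega |D(|u|^{\si/2})|^2\,dx\,dt$. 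Consequently there is no reason why $\int_{t_n}^{t_n+1}\int_\Omega|Du|^2\,dx\,dt\to 0$, and weak lower semicontinuity then gives no information on $D\bar u$; the conclusion $\bar u\equiv 0$ does not follow. Two further defects compound this: with coefficients merely measurable in $t$ (the very setting of the paper), the time-translates $a(t_n+t,x,\cdot,\cdot)$ and $H(t_n+t,x,\cdot)$ do not converge in any sense that allows passing to the limit in the equation, so $\bar u$ cannot be asserted to solve ``a problem of the same type'' by monotonicity arguments, which require convergence of the operators; and even granting $\bar u\equiv 0$, this does not directly contradict $L>0$, since the $L^\infty$ norm is not continuous under strong $L^2$ convergence --- one would have to route back through your paragraph-one bootstrap via smallness of $\|u(s)\|_{L^\si(\Omega)}$ at a single time.

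The missing idea --- which is how the paper closes exactly this gap with no compactness at all --- is to let the truncation level depend on the restart time. Once $u(\tau)\in L^\infty(\Omega)$, the smallness $\|G_k(u(\tau))\|_{L^\si(\Omega)}<\delta$ required to run \rife{gkdecay} from time $\tau$ holds not only for $k\ge k_0$ but for \emph{every} $k$ with $k>\|u(\tau)\|_{L^\infty(\Omega)}-\delta$ (up to the factor $|\Omega|^{1/\si}$), because then $G_k(u(\tau))$ is small already in $L^\infty(\Omega)$. Assuming by contradiction $\|u(t)\|_{L^\infty(\Omega)}\to\ell>0$, one fixes $\eps<\min\{\ell,\delta\}$, takes $\tau=t/2$ and chooses $k$ with $\|u(t/2)\|_{L^\infty(\Omega)}-\eps<k<\|u(t/2)\|_{L^\infty(\Omega)}-\tfrac{\eps}{2}$; the splitting $u=G_k(u)+T_k(u)$ and \rife{gkdecay} then give $\|u(t)\|_{L^\infty(\Omega)}\le c\,t^{-N/(2\si)}\delta+\|u(t/2)\|_{L^\infty(\Omega)}-\tfrac{\eps}{2}$, and letting $t\to\infty$ yields $\ell\le\ell-\tfrac{\eps}{2}$, a contradiction. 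This adaptive choice of $k$ converts the decay of the super-level part into a strict decrease of the sup norm by a fixed amount, which is exactly what your fixed-$k$ argument cannot produce; it is elementary, and it survives merely measurable coefficients, which is the point of the paper.
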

\proof
We start recalling the following decay for $G_k(u)$ in the $L^{\infty}$-norm, which is given by \rife{gkdecay}:
\begin{equation}\label{gk}
\|G_k(u(t))\|_{L^{\infty}(\Omega)}\le c (t-\tau)^{-\frac{N}{2\si}} \|G_k(u(\tau))\|_{L^{\sigma}(\Omega)},
\end{equation}
where $t>\tau>0$ and $k\geq k_0$. We recall that $k_0$ is given by requiring  $\|G_k(u(\tau))\|_{L^{\sigma}(\Omega)}<\delta $ for $k\ge k_0$, where $\de$ is a value only depending on $q, N, \alpha, \gamma$. In particular, we can take any $k>0$ such that $k> \|u(\tau)\|_{L^{\infty}(\Omega)}-\delta $. 
\\
Now, suppose by contradiction that 
\be\label{ell}
\lim_{t\to\infty}\|u(t)\|_{L^{\infty}(\Omega)}=\ell,
\ee
for some strictly positive $\ell$. The above limit exists thanks to \cref{corinfty}. 
We rewrite $u$ as the sum $u=G_k(u)+T_k(u)$ and estimate as follows:
\begin{equation*}
\begin{split}
\|u(t)\|_{L^{\infty}(\Omega)}&\le \|G_k(u(t))\|_{L^{\infty}(\Omega)}+k\\
&\le  c (t-\tau)^{-\frac{N}{2\si}} \delta+k
\end{split}
\end{equation*}
for $k\ge k_0$. Now, we fix a positive $\eps$ smaller than $\min\{\ell,\delta \}$: in this way, the decay \eqref{gk} holds for every  $k>\|u(\tau)\|_{L^{\infty}(\Omega)}-\eps$. In particular, we are allowed to choose  $k>0$ so that
\[
\|u(\tau)\|_{L^{\infty}(\Omega)}-\eps<k<\|u(\tau)\|_{L^{\infty}(\Omega)}-\frac{\eps}{2}\,,
\]
and such a choice is possible for arbitrarily large $\tau$ thanks to \rife{ell} and  since $\ell>0$.

Now, we set $\tau=\frac{t}{2}$, hence the previous estimate gives
\[
\|u(t)\|_{L^{\infty}(\Omega)}\le  c t^{-\frac{N}{2\si}} \delta +\|u(t/2)\|_{L^{\infty}(\Omega)}-\frac{\eps}{2}
\]
and, letting $t$ tend to infinity, we find a  contradiction.
\endproof

\begin{remark}\label{uniform}
It can be interesting to notice that the limit stated by \cref{propdec} is actually uniform whenever $u_0$ varies in a  compact set of $\elle\sigma$. Indeed, assume that $\{u_{0,n}\}_n\subset \elle\sigma$ is a sequence which is relatively compact in $\elle\sigma$. In particular, it is equi-integrable, and as a consequence of \cref{bd} (and \cref{k0} for the value of $k_0$ in \rife{gkdecay}) the sequence of solutions $\{u_n(t)\}_n$ is uniformly bounded in $\elle\infty$ for any $t>0$. Therefore, one has that $\sup\limits_n \|u_n(t)\|_\infty$ is itself  a non increasing quantity and admits a limit at infinity. A similar reasoning as in the  proof of \cref{propdec}  shows that  this limit is zero. 
\end{remark}

This information allows us to improve and extend the decay results previously obtained by using the smallness of $\|u(t)\|_{L^{\infty}(\Omega)}$ (for $t$ large) rather than the smallness of $\|G_k(u(t))\|_{L^{\sigma}(\Omega)}$ (for $k$ great enough). The result below provides a  sharp decay for the $L^\infty$-norm, extending a  similar result obtained for the heat operator in  \cite{BDL}, \cite{PZ}.

\begin{corollary}\label{sharp}
Assume \eqref{A1},   \eqref{H} with $2-\frac{N}{N+1}< q<2$ and \eqref{ID1}. Let $u$ be a solution of \eqref{P}. Then we have the following exponential decay for the $L^{\infty}(\Omega)$ norm of $u$, namely for every $\tau>0$
\be\label{infdecay}
\|u(t)\|_{L^{\infty}(\Omega)}\le c_\tau e^{-\lambda_1 t} \qquad \forall\, t>\tau>0
\ee
where $\lambda_1$ is defined as below:
\[
\lambda_1=\inf\biggl\{ \frac12 \frac{\integrale a(x,u, D u)\cdot  D u\,dx}{\|u\|_{L^2(\Omega)}^2}\,,\quad u\in H^1_0(\Omega) \biggr\}
\]
and the constant $c_\tau= c(\tau, \alpha, \gamma, q, N, \Omega, u_0)$ remains bounded whenever  $u_0$ varies in a  compact set of $\elle\sigma$.
\end{corollary}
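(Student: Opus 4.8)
The plan is to prove \rife{infdecay} in two stages: first I would establish the \emph{sharp} exponential decay of the $L^2$-norm, $\|u(t)\|_{L^2(\Omega)}\le C e^{-\lambda_1 t}$ for large $t$, and then transfer this decay to the sup-norm through the regularizing effect, losing only a fixed time shift that gets absorbed into the constant. The whole argument hinges on \cref{propdec}: the fact that $\|u(t)\|_{L^\infty(\Omega)}\to 0$ is precisely what makes the superlinear first order term a negligible perturbation at large times, and this is what allows one to retain the \emph{full} coercive quantity $\into a\cdot Du\,dx$ (hence the sharp constant $\lambda_1$) instead of the reduced one appearing in \rife{2r}.

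\emph{Step 1 ($L^2$ decay).} Since $u$ is bounded for $t>\tau$ by \cref{bd} and $u\in C^0((0,T];\elle r)$ for every $r$ by \cref{Prop2r}, the conclusion of \cref{propW11} extends to $\Phi(u)=\tfrac12 u^2$ on $(\tau,T)$; testing with $u$ thus yields the energy identity
\[
\tfrac12\tfrac{\text{d}}{\text{d}t}\|u(t)\|_{L^2(\Omega)}^2 + \into a(t,x,u,Du)\cdot Du\,dx = \into H(t,x,Du)\,u\,dx
\]
for a.e.\ $t>\tau$. I would bound the right-hand side by \rife{H} and Young's inequality, $\gamma\into|Du|^q|u|\,dx\le \epsilon\into|Du|^2\,dx + C_\epsilon\into|u|^{\frac{2}{2-q}}\,dx$, and then use the algebraic identity $\frac{2(q-1)}{2-q}=\frac{2\sigma}{N}$ (already exploited for \rife{2r}) to write $\into|u|^{\frac{2}{2-q}}\,dx=\into|u|^2|u|^{\frac{2\sigma}{N}}\,dx\le \|u(t)\|_{L^\infty(\Omega)}^{\frac{2\sigma}{N}}\|u(t)\|_{L^2(\Omega)}^2$. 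By \cref{propdec} I fix a small $\eta>0$ and $T_\eta$ with $\|u(t)\|_{L^\infty(\Omega)}\le\eta$ for $t\ge T_\eta$; bounding $\into|Du|^2\,dx\le\frac1\alpha\into a\cdot Du\,dx$ by \rife{A1} and keeping the full coercive term, I get for $t\ge T_\eta$
\[
\tfrac12\tfrac{\text{d}}{\text{d}t}\|u(t)\|_{L^2(\Omega)}^2 + (1-\epsilon/\alpha)\into a\cdot Du\,dx \le C_\epsilon\,\eta^{\frac{2\sigma}{N}}\|u(t)\|_{L^2(\Omega)}^2.
\]
Finally, the very definition of $\lambda_1$ gives $\into a\cdot Du\,dx\ge 2\lambda_1\|u(t)\|_{L^2(\Omega)}^2$, so the differential inequality becomes $\tfrac12\tfrac{\text{d}}{\text{d}t}\|u\|_{L^2}^2+[2\lambda_1(1-\epsilon/\alpha)-C_\epsilon\eta^{2\sigma/N}]\|u\|_{L^2}^2\le0$. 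Choosing first $\epsilon<\alpha/2$ and then $\eta$ small enough that the bracket is $\ge\lambda_1$, I conclude $\|u(t)\|_{L^2(\Omega)}\le\|u(T_\eta)\|_{L^2(\Omega)}\,e^{-\lambda_1(t-T_\eta)}$ for all $t\ge T_\eta$.

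\emph{Step 2 ($L^2\to L^\infty$).} To upgrade to the sup-norm I would reuse the energy inequality \rife{2r} with $r=2$, which after Sobolev's embedding places $v=u$ in the setting of \cref{moser} with base exponent $m=2$ and potential $\psi=|u|^{\frac{2\sigma}{N}}$; since $u\in L^\infty_{\rm loc}((0,T);\elle r)$ for every $r$, one has $\psi\in L^\infty(L^\beta)$ for some $\beta>\frac N2$. Applying \cref{moser} on the window $(t-1,t)$ gives $\|u(t)\|_{L^\infty(\Omega)}\le C\,e^{C\|\psi\|_{L^\infty((t-1,t);\elle\beta)}^{2\beta/(2\beta-N)}}\,\|u(t-1)\|_{L^2(\Omega)}$; for $t$ large the potential $\psi$ is uniformly small by \cref{propdec}, so the exponential prefactor is bounded by a constant and therefore $\|u(t)\|_{L^\infty(\Omega)}\le C\|u(t-1)\|_{L^2(\Omega)}\le C'e^{-\lambda_1 t}$ for $t\ge T_\eta+1$.

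\emph{Conclusion and main obstacle.} On the remaining finite interval $\tau<t\le T_\eta+1$ the solution is bounded by \cref{bd}, so $\|u(t)\|_{L^\infty(\Omega)}\le C_\tau\le C_\tau e^{\lambda_1(T_\eta+1)}e^{-\lambda_1 t}$; combining this with Step 2 yields \rife{infdecay} with $c_\tau$ absorbing the factor $e^{\lambda_1(T_\eta+1)}$. Uniformity over a compact set of $\elle\sigma$ follows because $T_\eta$ and all the bounds invoked are uniform there, by \cref{uniform} and \cref{k0}. The delicate point of the whole proof is Step 1: to reach the sharp constant $\lambda_1$ one cannot afford the crude absorption of \rife{2r} (which would keep only $\frac\alpha2\into|Du|^2\,dx$), but must retain the entire coercive quantity $\into a\cdot Du\,dx$ and show that the superlinear term is an arbitrarily small multiple of $\|u\|_{L^2}^2$ — which is possible only thanks to the qualitative decay $\|u(t)\|_{L^\infty(\Omega)}\to0$ supplied by \cref{propdec}.
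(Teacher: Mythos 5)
Your proof is correct and follows essentially the same route as the paper's: the qualitative large-time smallness from \cref{propdec} is used to make the superlinear term a negligible perturbation, the \emph{full} coercive quantity $\into a\cdot Du\,dx$ is retained so that the definition of $\lambda_1$ gives the sharp decay of $\|u(t)\|_{L^2(\Omega)}$, and the regularizing effect then transfers the rate $e^{-\lambda_1 t}$ to the $L^\infty$-norm, with uniformity handled exactly as in \cref{uniform}. The only differences are in implementation: the paper absorbs the gradient term via the three-exponent H\"older/Sobolev estimate together with the preliminary exponential decay of $\|u(s)\|_{\elle\si}$ (obtained from \cref{corexp} with $k_0=0$), leading to a time-dependent coefficient $2\lambda_1(1-ce^{-\lambda(q-1)s})$ whose correction integrates to a bounded constant, whereas you use Young's inequality plus the $L^\infty$ smallness to obtain a constant coefficient $\ge \lambda_1$ directly, and your explicit application of \cref{moser} on unit time windows is precisely what the paper summarizes as ``we now use the regularizing effect''---both variants yield the stated rate.
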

\proof 
By \cref{propdec}, there exists $t_0$ such that 
$$
\|u(t)\|_{\elle\si} \leq \|u(t)\|_{\elle\infty} |\Omega|^{\frac1\si} < \de_0\qquad \forall t> t_0\,,
$$
where $\de_0$ can be chosen sufficiently small as needed in the proofs of  both \cref{corexp,Prop2r}. This means that  we can take $k_0=0$ in those Propositions  and we deduce that
\be\label{lambda}
\|u(t)\|_{\elle\si} \leq C_0 e^{-\lambda t}\qquad \forall t> t_0\,,
\ee
for some $\lambda>0$ and some $C_0>0$.

Thus, recalling \eqref{H}, we estimate as below
\besac
\ds\frac{1}{2}\frac{\text{d}}{\text{d}s}\integrale|u(s)|^2\,dx  + \int_\Omega  a(x,s,u(s), D u(s))\cdot D u(s)\,dx \\
[3mm]\ds
\le \gamma\int_\Omega| D u(s)|^q|u(s)|\,dx
\\
[3mm]\ds\le \gamma  \left(\int_\Omega | D u(s)|^2\,dx\right)^{\frac q2}\, \|u(s)\|_{L^{2^*}(\Omega)}^{2-q}\, \|u(s)\|_{L^{\sigma}(\Omega)}^{q-1}
\eesac
using H\"older's inequality with indices $\left(\frac{2}{q},\frac{2^*}{2-q},\frac{N}{2-q}\right)$ and the exact value of $\sigma= \frac{N(q-1)}{2-q}$. Using Sobolev's inequality and assumption \rife{A1} we deduce
$$
\frac{1}{2}\frac{\text{d}}{\text{d}s}\integrale|u(s)|^2\,dx   + \left(1- \frac{\gamma c_S}\alpha\, \|u(s)\|_{L^{\sigma}(\Omega)}^{q-1}\right) \int_\Omega  a(x,s,u(s), D u(s)) \cdot D u(s)\,dx \le 0\,.
$$
We now fix $\tau$ large enough so that $1- \frac{\gamma c_S}\alpha\, \|u(s)\|_{L^{\sigma}(\Omega)}^{q-1}>0$ for all $s\geq \tau$ and using the definition of $\lambda_1$ we get
\begin{equation}\label{improve}
\frac{\text{d}}{\text{d}s}\|u(s)\|^{2}_{L^{2}(\Omega)}+ 2\lambda_1 \left(1- \frac{\gamma c_S}\alpha\, \|u(s)\|_{L^{\sigma}(\Omega)}^{q-1}\right)\|u(s)\|^{2}_{L^{2}(\Omega)}\le 0.
\end{equation}
We use \eqref{lambda} in \eqref{improve}, thus we get
\begin{equation*}
\frac{\text{d}}{\text{d}s}\|u(s)\|^{2}_{L^{2}(\Omega)}+ 2\lambda_1(1-ce^{-\lambda (q-1)s})\|u(s)\|^{2}_{L^{2}(\Omega)}\le 0.
\end{equation*}
We set $g(s)=2\lambda_1(1-ce^{-\lambda (q-1)s})$. Then, by  integration we obtain
\[
\begin{split}
\|u(t)\|_{L^{2}(\Omega)}&\le  e^{-\frac12 \int_{\tau}^{t}g(s)\,ds}\|u(\tau)\|_{L^{2}(\Omega)}\\
&\le ce^{-\lambda_1 (t-\tau)}\|u(\tau)\|_{L^{2}(\Omega)}
\end{split}
\]
and thus we conclude the decay for the $L^2$-norm. We now use the regularizing effect and we deduce the same rate of exponential decay for all other Lebesgue norms and in particular for the $L^\infty$-norm. Finally, thanks to \cref{uniform}, the time $t_0$ chosen initially, as well as the other constants appearing later, can be uniformly chosen if $u_0$ varies in a compact set of $\elle\sigma$.
\endproof

\section{The case $1<q<2-\frac{N}{N+1}$ and $L^1$-data}\label{datil1}

In the range $1<q<2-\frac{N}{N+1}$, no restriction is needed on the Lebesgue class of initial data (notice that the value of $\sigma$ would become smaller than one for such $q$). 
We also stress that  one could as well consider  measure data, however we keep ourselves in the Lebesgue framework assuming $L^1$-data.

Since \cite{BlMu,Li}, it is customary for parabolic equations with $L^1$-data to work in the framework of renormalized solutions, as defined below. We recall that the space $\mathcal{T}_0^{1,2}(Q_T)$ is defined in \cite{BBGGPV}, see also \cref{defrin}.

\begin{definition}\label{defrin2}
We say that a function $u\in \mathcal{T}_0^{1,2}(Q_T)$ is a renormalized solution of \eqref{P} if satisfies
\begin{subequations} \label{RS2}
\renewcommand{\theequation}{$\widetilde{RS}$\arabic{equation}}
\begin{equation}
 H(t,x, D u)\in L^1(Q_T),\label{sr12} 
\end{equation}
\begin{equation}\label{sr22}
\begin{array}{c}
\ds
-\integrale S(u_0)\vp (0,x)\,dx+\iint_{Q_T}-S(u)\vp_t+a(t,x,u,D u)\cdot D (S'(u)\vp)\,dx\,dt\\
[3mm]\ds
=\iint_{Q_T}H(t,x,D u)S'(u)\vp\,dx\,dt,
\end{array}
\end{equation}
\begin{equation}\label{sr32}
\lim_{n\to \infty}\frac{1}{n}\iint_{\{n\le |u|\le 2n\}}a(t,x,u,D u)\cdot D u\,dx\,dt=0, 
\end{equation}
%\begin{equation}
%S(u\bigl|_{t=0})=S(u_0),\label{sr42} %\tag{RS4}
%\end{equation}
\end{subequations}
for every $S\in W^{2,\infty}(\mathbb{R})$ such that $S'(\cdot)$ has compact support, $\vp \in C^\infty_c([0,T)\times \overline \Omega)$ such that   $S'(u)\vp \in L^2(0,T;H_0^1(\Omega))$ (i.e., the product $S'(u)\vp$ is equal to zero on the parabolic boundary $(0,T)\times \partial \Omega$).
\end{definition}

\begin{remark}\label{cont1}
The spirit of \cref{rmkct} now applies to the truncations of $u$. Indeed, from the renormalized formulation we deduce that any $S$ such that $S'\in W^{1,\infty}(\R)$ and has compact support satisfies  $S(u)\in L^2(0,T; H^1_0(\Omega))$ so, if \rife{A2} holds, 
\begin{align*}
S(u)_t & =\text{div}(a(t,x,u, D u)S'(u))+ a(t,x,u,Du)Du S''(u)+H(t,x, D u)S'(u) 
\\
& \in L^2(0,T;H^{-1}(\Omega))+L^1(Q_T).
\end{align*}
This implies (\cite[Theorem $1.1$]{P1}) that   $S(u)\in C([0,T];L^1(\Omega))$ and in particular, from the formulation \rife{sr32},   $S(u)(0)= S(u_0)$ in $\elle1$. 

Later, we are going to see in \cref{equiv1} below that actually $u$ itself is continuous in $\elle1$.
\end{remark}

We point out that, by density arguments, one can afford for test functions $\vfi\in  L^2(0,T;H_0^1(\Omega))\cap  L^{\infty}(Q_T)$ such that $\vp_t\in L^{2}(0,T;H^{-1}(\Omega))$, $\vp(T,x)=0$. Moreover, integration by parts formulas also hold for the renormalized formulation, for instance  the  conclusion of \cref{propint}   holds   for every $t_1, t_2 \in [0,T]$ at least when $\psi'(\cdot)$ has compact support.  

%due to the lack of any requirement on power of the solution belonging to the energy space, we have to ask for an asymptotic condition on the energy term such as \eqref{sr32}.

In a  similar way as for the other cases, we stress that an equivalent formulation can be given by only requiring the global time continuity (in $L^1(\Omega)$) and a  standard energy formulation for positive times. Namely, we have

\begin{proposition}\label{equiv1} A function $u$ is a renormalized solution of \rife{P} (in the sense of \cref{defrin2}) if and only if $u\in \continue {1} \cap \paraccaloc$, $u(0)= u_0$ and $u$ satisfies
\begin{equation}\label{1loc}
 \iint_{Q_T} - u\varphi_t+ a(t,x,u,D u)\cdot  D \varphi\,dx\,dt =  \iint_{Q_T} H(t,x, D u)\varphi\,dx\,dt
\end{equation}
for every $\varphi\in C^\infty_c(Q_T)$.
\end{proposition}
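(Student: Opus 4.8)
The plan is to prove the two implications separately, showing that a renormalized solution in the sense of \cref{defrin2} enjoys the global $\continue1$ regularity and the local distributional formulation \rife{1loc}, and conversely.

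For the forward direction, I would start from \cref{cont1}: the renormalized formulation already gives $S(u)\in C([0,T];L^1(\Omega))$ for every $S$ with $S'\in W^{1,\infty}(\R)$ of compact support, together with $S(u)(0)=S(u_0)$. The first task is to upgrade this to the continuity of $u$ itself in $\elle1$. The natural route is to show that the family $\{S_n(u)\}_n$ (with $S_n$ approximating the identity, say $S_n'=\chi_{[-n,n]}$ mollified) is Cauchy in $C([0,T];L^1(\Omega))$ uniformly in $n$; this requires controlling $\|u-S_n(u)\|_{\elle1}$ uniformly in time. Here the decay condition \rife{sr32} on the energy at level $n$ is the crucial ingredient, since it forces the mass of $u$ in $\{|u|>n\}$ to be uniformly small in time—this is precisely the equi-integrability needed to pass the $C([0,T];L^1)$ convergence through the truncation limit. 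Once $u\in\continue1$ and $u(0)=u_0$ are established, obtaining \rife{1loc} is comparatively routine: one chooses in \rife{sr22} a sequence $S_n$ saturating to the identity and a test function $\varphi\in C^\infty_c(Q_T)$, then lets $n\to\infty$. The terms $S_n(u)\varphi_t$ and $H(t,x,Du)S_n'(u)\varphi$ converge by dominated convergence (using $H(t,x,Du)\in L^1(Q_T)$), while the additional quadratic energy term $a(t,x,u,Du)\cdot Du\,S_n''(u)\varphi$ vanishes in the limit thanks again to \rife{sr32}; finally $a(t,x,u,Du)\cdot D(S_n'(u)\varphi)\to a(t,x,u,Du)\cdot D\varphi$ on $\mathrm{supp}\,\varphi$, where $u$ is locally in $H^1_0$ so $Du$ makes classical sense.

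For the converse, I would assume $u\in\continue1\cap\paraccaloc$ satisfies \rife{1loc} with $u(0)=u_0$, and reconstruct the renormalized identities. Since $u\in\paraccaloc$, each truncation $T_k(u)\in L^2(0,T;H^1_0(\Omega))$ on any interval $(\tau,T)$, which yields $DT_k(u)=Du\,\chi_{\{|u|<k\}}$ and hence $u\in\mathcal{T}^{1,2}_0(Q_T)$; the condition $H(t,x,Du)\in L^1(Q_T)$ should follow from the a priori estimates of \cref{datil1} applied to the energy of the truncations, combined with the growth \rife{H}. To obtain \rife{sr22}, I would test the local formulation with $S'(u)\varphi$ for $\varphi\in C^\infty_c([0,T)\times\overline\Omega)$ and use the chain-rule/integration-by-parts result (the analogue of \cref{propint}, valid here for $\psi$ with compact support) to move the time derivative onto $S(u)$ and recover the initial term $-\int_\Omega S(u_0)\varphi(0)\,dx$; the $\continue1$ continuity up to $t=0$ is what justifies this boundary contribution. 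The genuinely delicate point is the energy decay \rife{sr32}: I would derive it from the finite-energy information encoded in \rife{beta} (which the solution must satisfy, per the setup of \cref{datil1}), exactly as noted in the \cref{rmkct}-style remark following \cref{defrin}—namely that $(1+|u|)^{\frac{\sigma}{2}-1}u\in L^2(0,T;H^1_0(\Omega))$ controls $\frac1n\iint_{\{n<|u|<2n\}}a\cdot Du\,dx\,dt\to0$.

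The main obstacle I anticipate is the forward direction's upgrade from $S(u)\in\continue1$ to $u\in\continue1$: one must establish equi-integrability of $u(t)$ in $\elle1$ that is uniform in $t\in[0,T]$, not merely pointwise, so that the truncations converge in the correct sup-in-time topology. This is where \rife{sr32} does the essential work, and carefully extracting uniform-in-time smallness of the tail mass from the $n\to\infty$ energy decay—rather than just $L^1(Q_T)$-integrated smallness—is the technically demanding step. The remaining manipulations are standard applications of the chain rule, dominated convergence, and the a priori bounds already proved, so I would expect them to be handled in the appendix in parallel with the proofs of \cref{defequiv,equivren}.
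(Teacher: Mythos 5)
Your proposal has the right architecture (two implications, chain rules, the a priori estimates of \cref{datil1}), but both directions contain a genuine gap. In the direction ``renormalized $\Rightarrow$ \cref{equiv1}'', your mechanism for upgrading $S(u)\in \continue1$ to $u\in\continue1$ does not work as stated: the asymptotic condition \rife{sr32} is a statement about the gradient energy on the annuli $\{n\le|u|\le 2n\}$ \emph{integrated over all of} $(0,T)$, and by itself it cannot force the tail mass $\int_{\{|u(t)|>k\}}|u(t)|\,dx$ to be small \emph{uniformly in $t$}; you correctly flag this as the demanding step, but the bridge must be the equation itself, not \rife{sr32}. The paper tests \rife{sr22} with $S'(u)=\frac{T_\omega(G_k(u))}{\omega}\theta_n(G_k(u))$, lets $n\to\infty$ (this is the only place \rife{sr32} enters, to justify the test function) and then $\omega\to 0$, obtaining the dissipation estimate
\begin{equation*}
\into |G_k(u(t))|\,dx\le \into |G_k(u_0)|\,dx+\gamma\iint_{Q_T}|D G_k(u)|^q\,dx\,dt ,
\end{equation*}
and then uses $|Du|\in M^{\frac{N+2}{N+1}}(Q_T)$ (\cref{estL1}) to make the right-hand side small uniformly in $t$ as $k\to\infty$. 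Moreover, you never prove that $u\in\paraccaloc$: at the crucial point you write ``where $u$ is locally in $H^1_0$'', but for a renormalized solution only the truncations lie in the energy space a priori, and this membership is part of the statement. The paper obtains it from the regularizing effect of \cref{renreg} (solutions enter $\elle r$, $r\le 2$, at positive times, whence locally finite energy); without invoking some such result your passage to the limit in the energy term, and the statement itself, are unjustified.

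In the converse direction, your plan to derive \rife{sr32} ``from the finite-energy information encoded in \rife{beta}'' fails because \rife{beta} is not part of this setting: here $1<q<2-\frac{N}{N+1}$, so $\sigma=\frac{N(q-1)}{2-q}<1$, and \cref{datil1} never assumes \rife{beta} (that condition, and the remark that it implies the asymptotic energy decay, belong to the range $2-\frac{N}{N+1}<q<2$ of \cref{defrin}). Instead, the paper derives \rife{sr32} directly from the equation: testing with $(1-\theta_n(u))\,\mathrm{sign}(u)$ on $(\tau,T)$ and letting $\tau\to 0$ gives
\begin{equation*}
\frac1n \iint_{\{n\le |u|\le 2n\}}a(t,x,u,D u)\cdot D u\,dx\,dt \leq \int_{\{|u_0|>n\}} |u_0|\,dx+ \iint_{\{|u|>n\}} |H(t,x,Du)|\,dx\,dt ,
\end{equation*}
whose right-hand side vanishes as $n\to\infty$ because $u_0\in\elle1$ and $H(t,x,Du)\in\parelle1$ (the latter coming, as you correctly anticipated, from the Marcinkiewicz estimates of \cref{anche,estL1}). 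The remaining steps of your converse direction (justifying $S'(u)\varphi$ on $(\tau,T)$ and letting $\tau\to0$ via the $\continue1$ continuity) do match the paper.
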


We postpone the proof of \cref{equiv1} to the \cref{app}, and proceed by showing a  priori estimates and contraction principles for the solutions.

\begin{lemma}\label{lemGkL1}
Assume \eqref{A1}, \eqref{A2} and \eqref{H} with $1<q<2-\frac{N}{N+1} $. Let $u_0\in \elle1$ and let $u$ be a renormalized solution of \eqref{P}. For $\omega$ positive and sufficiently small so that $\frac{N(q-1+\omega)}{2-q}<1$, we set   $\Theta(s):=  \int_0^{|s|}  [1-\frac1{(1+ |r|)^{\om}}] dr$.

% be any function such that $\Theta'(s)\in  W^{1,\infty}$, $\Theta'(s)$    is nondecreasing,  $\Theta'(0)=0$, $\Theta''(0) >0$ and 
%\be\label{condthe}
% \exists c>0\,:\,\quad  |\Theta'(s)| \leq c\, \Theta''(s)^{\frac q2} \left( \int_0^s \Theta''(r)^{\frac{2-q}2}dr\right) 
% \qquad \forall s\in \R\,.
% % \,\, (\Theta(s))^{\frac{2-q}N}
%\ee
Then,  there exists a positive value $\delta_0$,   depending on $\omega$, $\Omega$, $\alpha$, $c_S$ and $\gamma$, such that, for every $k>0$ and for every $\delta<\delta_0$ satisfying
\[
\into \Theta(G_k(u_0))dx <\delta
\]
we have
\begin{equation}\label{lemgk2}
\into \Theta(G_k(u(t)))dx<\delta\quad \forall\, t\in[0,T].
\end{equation}
%For example, the value of $\delta_0$ can be explicitly given by the equality $\frac{c_S \gamma}{\si^q} {\delta_0}^{\frac{2-q}{N}}=\alpha\frac{\sigma-1}{\si^2}$.
\end{lemma}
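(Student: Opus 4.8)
The plan is to reproduce, at the structural level, the two–step scheme of \cref{lemGk}, with the convex function $\Theta$ now playing the role that $\frac1\si|\cdot|^\si$ had in the regime $\si\ge 1$. First I would test the renormalized equation with $\Theta'(G_k(u))$. Since $\Theta$ is even and convex with $\Theta'(0)=0$, $\Theta'(s)=\mathrm{sign}(s)\,[1-(1+|s|)^{-\om}]$ bounded, and $\Theta''(s)=\om(1+|s|)^{-(1+\om)}\ge 0$, the chain rule (\cref{propint}, in the renormalized form available by \cref{equiv1}) together with the coercivity \eqref{A1} and the growth \eqref{H} yields the differential inequality
\[
\frac{d}{dt}\into \Theta(G_k(u))\,dx+\al\,\om\into |D G_k(u)|^2(1+|G_k(u)|)^{-(1+\om)}\,dx\le \ga\into |D G_k(u)|^q\,|\Theta'(G_k(u))|\,dx
\]
for a.e.\ $t$. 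The dissipation on the left is exactly a Dirichlet energy: setting $v:=(1+|G_k(u)|)^{(1-\om)/2}-1\in H^1_0(\Omega)$ one has $\into |D G_k(u)|^2(1+|G_k(u)|)^{-(1+\om)}\,dx=\frac{4}{(1-\om)^2}\into|Dv|^2\,dx$.

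The core of the argument is to absorb the right–hand side into this dissipation up to a factor that is small whenever $\into\Theta(G_k(u))$ is small. Writing $|D G_k(u)|^q=\bigl(|D G_k(u)|^2(1+|G_k(u)|)^{-(1+\om)}\bigr)^{q/2}(1+|G_k(u)|)^{(1+\om)q/2}$ and applying H\"older with the same triple of exponents $\left(\frac{2}{q},\frac{2^*}{2-q},\frac{N}{2-q}\right)$ used in \cref{lemGk}, the first factor reproduces $\into|Dv|^2\,dx$, the second is controlled by Sobolev's inequality through $\|v\|_{L^{2^*}(\Omega)}^2\le c_S\into|Dv|^2\,dx$, and the third factor is $\bigl(\into(1+|G_k(u)|)^{\si_\om}|\Theta'(G_k(u))|^{N/(2-q)}\,dx\bigr)^{(2-q)/N}$, where the balance of powers forces precisely $\si_\om=\frac{N(q-1+\om)}{2-q}$. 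This is exactly where the standing hypothesis $\frac{N(q-1+\om)}{2-q}<1$ is used: since $\si_\om<1<\frac{N}{2-q}$, an elementary pointwise comparison gives $(1+s)^{\si_\om}|\Theta'(s)|^{N/(2-q)}\le C\,\Theta(s)$ for all $s\ge 0$ (the inequality holding at infinity because $\si_\om<1$ while $\Theta$ grows linearly, and near the origin because $|\Theta'(s)|^{N/(2-q)}=o(s^2)$). Collecting the estimates, and accounting for the lower–order discrepancies produced by the non‑homogeneity of $1+|\cdot|$ (absorbed using the slack $\si_\om<1$ and the smallness of $|\{|u|>k\}|$ for large $k$), one reaches
\[
\ga\into|D G_k(u)|^q\,|\Theta'(G_k(u))|\,dx\le \ga\,C\,\Bigl(\into|Dv|^2\,dx\Bigr)\Bigl(\into\Theta(G_k(u))\,dx\Bigr)^{\frac{2-q}{N}}.
\]

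I would then close exactly as in Step~2 of \cref{lemGk}. Define $\de_0$ by requiring $\ga\,C\,\de_0^{(2-q)/N}$ to equal the (positive) coefficient of the dissipation term; then, as long as $\into\Theta(G_k(u(s)))<\de<\de_0$ on a time interval, the right–hand side is strictly dominated by the dissipation, so $t\mapsto\into\Theta(G_k(u(t)))$ is non‑increasing there. Setting $T^*:=\sup\{t:\ \into\Theta(G_k(u(s)))<\de\ \forall\,s\le t\}$, continuity of this map gives $T^*>0$, while monotonicity yields $\into\Theta(G_k(u(T^*)))\le\into\Theta(G_k(u_0))<\de$, contradicting maximality unless $T^*=T$. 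The required continuity of $t\mapsto\into\Theta(G_k(u(t)))$ follows from $u\in C([0,T];L^1(\Omega))$ (\cref{equiv1}) together with the fact that $\Theta\circ G_k$ is $1$‑Lipschitz (because $|\Theta'|\le 1$), so that $\bigl|\into\Theta(G_k(u(t)))\,dx-\into\Theta(G_k(u(s)))\,dx\bigr|\le\|u(t)-u(s)\|_{L^1(\Omega)}$.

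The delicate point — and the step I expect to be the main obstacle — is the rigorous justification of the energy inequality of the first paragraph inside the renormalized framework, since $\Theta'(G_k(\cdot))$ is an admissible renormalization only after truncation: its derivative $\Theta''$ is not compactly supported. I would therefore test with the renormalizations $S_m$ determined by $S_m'(s)=\Theta'(G_k(T_m(s)))$ (whose derivative \emph{is} compactly supported), exploit the asymptotic energy condition \eqref{sr32} of \cref{defrin2} to discard the contribution of the layer $\{|u|\sim m\}$ as $m\to\infty$, and in parallel run a bootstrap identical to Step~1 of \cref{lemGk} to prove that $v=(1+|G_k(u)|)^{(1-\om)/2}-1\in L^2(0,T;H^1_0(\Omega))$ for $k$ large. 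The absorption in this bootstrap is licit because $\into\Theta(G_k(u))\,dx\le\|G_k(u)\|_{L^1(\Omega)}\to 0$ uniformly in $t$ as $k\to\infty$, by compactness of $\{u(t)\}_{t\in[0,T]}$ in $L^1(\Omega)$; once finite energy is secured for one large $k$, the usual argument propagates both the regularity and the inequality to every $k>0$.
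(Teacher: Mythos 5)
Your architecture mirrors the paper's proof of \cref{lemGk} adapted to $\Theta$, and several of your ingredients are sound: the pointwise comparison $(1+s)^{\si_\om}|\Theta'(s)|^{N/(2-q)}\le C\,\Theta(s)$ is correct, the justification of the test function through truncations and \rife{sr32} is legitimate, and the final barrier/continuity argument is the right closing step. However, the central display of your proof,
\[
\ga\into|D G_k(u)|^q\,|\Theta'(G_k(u))|\,dx\le \ga\,C\,\Bigl(\into|Dv|^2\,dx\Bigr)\Bigl(\into\Theta(G_k(u))\,dx\Bigr)^{\frac{2-q}{N}},
\]
does not follow from your H\"older splitting, and is in fact false. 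Your middle H\"older factor is $(1+|G_k(u)|)^{\frac{(1-\om)(2-q)}{2}}=(v+1)^{2-q}$, and $v+1$ does not vanish where $G_k(u)=0$, so it is not an $H^1_0$ function: Sobolev only yields $\|v+1\|_{L^{2^*}(\Omega)}^{2-q}\le c\bigl(\into|Dv|^2\,dx\bigr)^{\frac{2-q}{2}}+c\,|\Omega|^{\frac{2-q}{2^*}}$, and the constant part leaves an extra term of order $\bigl(\into|Dv|^2\,dx\bigr)^{\frac q2}\bigl(\into\Theta(G_k(u))\,dx\bigr)^{\frac{2-q}{N}}$, \emph{sublinear} in the dissipation. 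To see concretely that this is fatal, test the inequalities on profiles $G_k(u)=\eps\phi$ with $\phi\ge0$ fixed and $\eps\to0$: the left-hand side scales like $\eps^{q+1}$, your claimed right-hand side like $\eps^{2+\frac{2(2-q)}{N}}$, and $q+1<2+\frac{2(2-q)}{N}$ precisely because $\frac{N(q-1)}{2-q}<1$ in this range of $q$, so the display fails; worse, the lossy H\"older bound itself scales like $\eps^{q+\frac{2(2-q)}{N}}$, which dominates the dissipation $\eps^2$ since $q<2$ and $N>2$. Thus exactly in the dangerous regime (small $\into\Theta(G_k(u))\,dx$ together with small dissipation) no absorption is possible, and your proposed remedies cannot repair this: smallness of $\into\Theta(G_k(u(t)))\,dx$ does not control $|\{|u(t)|>k\}|$ (the quadratic behaviour of $\Theta$ near $0$ prevents any such implication), and restricting to large $k$ is not permitted, since the lemma is asserted for \emph{every} $k>0$ and is used downstream with the fixed value $k=\|u_0\|_{\elle\infty}$ in \cref{corinftyr2}.

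The paper avoids exactly this pitfall through the inequality \rife{condthe},
\[
|\Theta'(s)| \leq c\, \Theta''(s)^{\frac q2} \Bigl( \int_0^s \Theta''(r)^{\frac{2-q}2}dr\Bigr)   (1+ s^{\omega}),
\qquad
\int_0^{s} \Theta''(r)^{\frac{2-q}2}dr\leq \Bigl( \int_0^{s} \Theta''(r)^{\frac12}dr\Bigr)^{2-q}  s^{q-1},
\]
so that the Sobolev slot is occupied by the primitive $\Phi(G_k(u))=\int_0^{G_k(u)}\Theta''(r)^{\frac12}dr$ (proportional to your $v$, but produced \emph{without} the additive constant, hence vanishing where $|u|\le k$ and belonging to $H^1_0(\Omega)$). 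Sobolev then returns $\bigl(\into|DG_k(u)|^2\,\Theta''(G_k(u))\,dx\bigr)^{\frac{2-q}2}$, making the right-hand side \emph{linear} in the dissipation, while the mismatch is shifted to the third H\"older slot $G_k(u)^{q-1}(1+G_k(u)^{\om})$, where the hypothesis $\frac{N(q-1+\om)}{2-q}<1$ together with $\Theta(s)\ge c\min(s,s^2)$ produce the small factor $\max\bigl\{y^{q-1+\om},\,y^{\frac{q-1}2}\bigr\}$, $y=\into\Theta(G_k(u))\,dx$. With this replacement, and the corresponding definition of $\de_0$ through $\ga\,c\,\max\bigl\{\de_0^{q-1+\om},\de_0^{\frac{q-1}2}\bigr\}\le\al$, the rest of your argument (barrier, continuity in $\elle1$) goes through as written.
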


\proof  Observe that $\Theta(s)$ is a positive,  even and convex function such that $\Theta' \in W^{1,\infty}(\R)$ and $\Theta'(0)=0$; we use \cref{equiv1} and we take $ \Theta'(G_k(u)) $ as test function, this is justified as in   \cref{propint}. Using \rife{A1} and \rife{H}   we get
\besac 
\ds\into \Theta(G_k(u(t_2)))dx  + \alpha \int_{t_1}^{t_2}\into |DG_k(u)|^2\,\Theta''(G_k(u)) \,dx\,dt\\
[3mm]\ds\leq \into \Theta(G_k(u(t_1)))dx
+ \gamma \int_{t_1}^{t_2}\into |DG_k(u)|^q\,|\Theta'(G_k(u))|\,dx\,dt
\eesac
for every $t_1,t_2\in (0,T)$. We notice that $\Theta$ satisfies, for some constant $c$ (eventually depending on $\omega$):
\be\label{condthe}
 |\Theta'(s)| \leq c\, \Theta''(s)^{\frac q2} \left( \int_0^s \Theta''(r)^{\frac{2-q}2}dr\right) \,  (1+ s^{\omega})
 \qquad \forall s\in \R\,.
\ee
Thanks to \rife{condthe} we get
\besac 
\ds\into \Theta(G_k(u(t_2)))dx  + \alpha \int_{t_1}^{t_2}\into |DG_k(u)|^2\,\Theta''(G_k(u)) dx\,dt
\leq \into \Theta(G_k(u(t_1)))dx
\\
[3mm]\ds 
+ \gamma\, c  \int_{t_1}^{t_2}\into |DG_k(u)|^q\,\Theta''(G_k(u))^{\frac q2} \left( \int_0^{G_k(u)} \Theta''(r)^{\frac{2-q}2}dr\right)\,   (1+ G_k(u)^{\omega})\,dx\,dt
\eesac
and since $ \int_0^{G_k(u)} \Theta''(r)^{\frac{2-q}2}dr\leq \left( \int_0^{G_k(u)} \Theta''(r)^{\frac12}dr\right)^{2-q} \,\, G_k(u)^{q-1}$ we deduce
\besac 
\ds\into \Theta(G_k(u(t_2)))dx  + \alpha \int_{t_1}^{t_2}\into |DG_k(u)|^2\,\Theta''(G_k(u)) dx\,dt
%\\
%[3mm]\ds
\leq \into \Theta(G_k(u(t_1)))dx
\\
[3mm]\ds+ \gamma\, c  \int_{t_1}^{t_2}\into |DG_k(u)|^q\,\Theta''(G_k(u))^{\frac q2} \left( \int_0^{G_k(u)} \Theta''(r)^{\frac12}dr\right)^{2-q} \,\,  G_k(u)^{q-1} (1+ G_k(u)^{\omega})\,dx\,dt\,.
\eesac
Applying H\"older inequality  with the usual three exponents $\left(\frac 2q; \frac{2^*}{2-q}; \frac N{2-q}\right)$ we obtain
\besac 
\ds\into \Theta(G_k(u(t_2)))dx  + \alpha \int_{t_1}^{t_2}\into |DG_k(u)|^2\,\Theta''(G_k(u))\,dx\,dt   \leq \into \Theta(G_k(u(t_1)))dx
\\
[3mm]\ds+ \gamma\, c  \int_{t_1}^{t_2}\left(\into |DG_k(u)|^2\,\Theta''(G_k(u)) dx\,dt\right)^{\frac q2} \left( \into \left( \int_0^{G_k(u)} \Theta''(r)^{\frac12}dr\right)^{2^*} \right)^{\frac{2-q}{2^*}}\times 
\\
[3mm]\ds \qquad \qquad \times 
  \left(\into  [G_k(u)^{q-1}(1+ G_k(u)^{\omega})]^{\frac{N}{2-q}}\,dx\right)^{\frac{2-q}N}\, dt
\eesac
and then, by Sobolev inequality, 
\besac
\ds\into \Theta(G_k(u(t_2)))dx  + \alpha \int_{t_1}^{t_2}\into |DG_k(u)|^2\,\Theta''(G_k(u)) dx\,dt
\leq \into \Theta(G_k(u(t_1)))dx
\\
[3mm]\ds
+ \gamma\, c  \left(\sup\limits_{t\in [0,t_2]}  \left[\into  [G_k(u)^{q-1}(1+ G_k(u)^{\omega})]^{\frac{N}{2-q}}\,dx\right]^{\frac{2-q}N}\right)
\int_{t_1}^{t_2} \into |DG_k(u)|^2\,\Theta''(G_k(u)) dx\,dt\,.
\eesac
Since $q<2-\frac N{N+1}$,   we have $\frac{N(q-1)}{2-q}<1$, and the same holds for $\frac{N(q-1+\omega)}{2-q}$ because of the condition required on $\omega$. Thus
\begin{align*}
&  
\left(\into  [G_k(u)^{q-1}(1+ G_k(u)^{\omega})]^{\frac{N}{2-q}}\,dx\right)^{\frac{2-q}N}  \leq c \, 
\left\{ \left(\into  |G_k(u(t))|\,dx\right)^{q-1} + \left(\into  |G_k(u(t))|\,dx\right)^{q-1+\omega}\right\} 
\\
& \qquad \qquad \leq c \,   \max\left\{ \left[\into  \Theta(G_k(u(t)))\,dx\right]^{q-1+\omega}\,,\, \left[ \into  \Theta(G_k(u(t)))\,dx\right]^{\frac{q-1}2} \right\} 
\end{align*}
where we used that $\Theta(s) \geq c \min (s,s^2)$ for some constant $c>0$.  We conclude that
 \besac
\ds\into \Theta(G_k(u(t_2)))dx  + \alpha \int_{t_1}^{t_2}\into |DG_k(u)|^2\,\Theta''(G_k(u)) dx\,dt\leq \into \Theta(G_k(u(t_1)))dx
\\
[3mm]\ds 
+ \gamma\, c  \left(\sup\limits_{t\in [0,t_2]}  \max\left\{ \left[\into  \Theta(G_k(u(t)))\,dx\right]^{q-1+\omega}\,,\, \left[ \into  \Theta(G_k(u(t)))\,dx\right]^{\frac{q-1}2} \right\} \right)
\times
\\
[3mm]\ds
\qquad\qquad\qquad \times 
\int_{t_1}^{t_2} \into |DG_k(u)|^2\,\Theta''(G_k(u)) dx\,dt\,.
\eesac
We conclude with the usual continuity argument we used before; 
Let $\de_0$ be such that 
$$
\gamma \, c  \, \max\left\{\de_0^{q-1+\omega}\,,\, \de_0^{\frac{q-1}2} \right\}  \leq \alpha\,.
$$
Then, whenever   $\into  \Theta(G_k(u(t)))\,dx\leq \de <\de_0$ for all $t\leq \tau$, we deduce that $\into  \Theta(G_k(u(\tau)))\,dx\leq \de$ as well. Moreover, since $u\in \continue 1$, we also have $\Theta(G_k(u))\in \continue 1$. Therefore, a continuity argument leads to the conclusion.
%
%
%--------
%
%
%\besac 
%\ds\into \Theta(G_k(u(t_2)))dx  + \alpha \int_{t_1}^{t_2}\into |DG_k(u)|^2\,\Theta''(G_k(u)) dx\,dt\\
%[3mm]\ds\leq \into \Theta(G_k(u(t_1)))dx
%\\
%[3mm]\ds+ \gamma\, c  \int_{t_1}^{t_2}\into |DG_k(u)|^q\,\Theta''(G_k(u))^{\frac q2} \left( \int_0^{G_k(u)} \Theta''(r)^{\frac12}dr\right)^{2-q} \,\, (\Theta(G_k(u)))^{\frac{2-q}N}\,dx\,dt
%\eesac
%and applying H\"older inequality  we obtain
%\besac 
%\ds\into \Theta(G_k(u(t_2)))dx  + \alpha \int_{t_1}^{t_2}\into |DG_k(u)|^2\,\Theta''(G_k(u))\,dx\,dt \\
%[3mm]\ds \leq \into \Theta(G_k(u(t_1)))dx
%\\
%[3mm]\ds+ \gamma\, c  \int_{t_1}^{t_2}\left(\into |DG_k(u)|^2\,\Theta''(G_k(u)) dx\,dt\right)^{\frac q2} \left( \into \left( \int_0^{G_k(u)} \Theta''(r)^{\frac12}dr\right)^{2^*} \right)^{\frac{2-q}{2^*}}  \into  \Theta(G_k(u))\,dx\, dt\,.
%\eesac
%Thanks to Sobolev's inequality we obtain
%\besac
%\ds\into \Theta(G_k(u(t_2)))dx  + \alpha \int_{t_1}^{t_2}\into |DG_k(u)|^2\,\Theta''(G_k(u)) dx\,dt
%\\
%[3mm]\ds\leq \into \Theta(G_k(u(t_1)))dx
%+ \gamma\, c  \left(\sup\limits_{t\in [0,T]}   \into \Theta(G_k(u(t)))\,dx\right)
%\int_{t_1}^{t_2} \into |DG_k(u)|^2\,\Theta''(G_k(u)) dx\,dt\,.
%\eesac
\qed

As in the previous cases, we deduce the following consequences from the above Lemma.

\begin{theorem}\label{estL1}
Assume \eqref{A1},  \eqref{H} with $1<q< 2-\frac{N}{N+1} $ and let $u_0\in \elle1$. Let $u$ be a renormalized   solution of \eqref{P}. Then $u\in M^{\frac{N+2}{N}}(Q_T)$ and $|D u|\in M^{\frac{N+2}{N+1}}(Q_T)$ and the following estimate holds:
\be\label{aprio2}
\sup_{t\in [0,T]}\|u(t)\|_{L^{1}(\Omega)}+\||D u|\|_{M^{\frac{N+2}{N+1}}(Q_T)}+ \iint_{Q_T}\frac{|D u|^2}{(1+|u|)^{\de+1}}\,dx\,dt\le M
\ee
where $\de>0$, the constant $M$ depends on $u_0$, $T$, $|\Omega|$, $q$, $\gamma$, $N$, $\alpha$, $\delta$,  and remains bounded when $u_0$ varies in sets which are bounded and equi-integrable in $L^{1}(\Omega)$. 
\end{theorem}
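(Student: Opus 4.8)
The bound splits into three claims: the uniform-in-time $L^1$ estimate, the finiteness and control of the weighted energy $\iint_{Q_T}\frac{|Du|^2}{(1+|u|)^{\delta+1}}$, and the two Marcinkiewicz norms. The plan is to read off the $L^1$ bound directly from the contraction principle \cref{lemGkL1}, then to obtain the weighted energy by testing with a bounded nonlinearity whose derivative carries the weight $(1+|u|)^{-1-\delta}$, absorbing the superlinear term $|Du|^q$ through the subcritical growth $q<2-\frac N{N+1}=\frac{N+2}{N+1}$, and finally to deduce the Marcinkiewicz estimates from the classical Boccardo--Gallou\"et argument. First I would fix $\delta<\delta_0$ (with $\delta_0$ as in \cref{lemGkL1}) and $k_0$ so that $\into\Theta(G_k(u_0))\,dx<\delta$ for $k\ge k_0$; since $u\in\continue1$ by \cref{equiv1}, \cref{lemGkL1} gives $\into\Theta(G_{k_0}(u(t)))\,dx\le\delta$ for every $t$, and using $\Theta(s)\ge c\min(s,s^2)$ together with the trivial bound on the bounded part $T_{k_0}(u)$ yields $\sup_t\|u(t)\|_{L^1(\Omega)}\le M$. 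The value $k_0$, hence $M$, is uniform as soon as $u_0$ ranges in a bounded, equi-integrable family, exactly as in the $\elle\sigma$ case.

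\textbf{A priori finiteness of the weighted energy.} The delicate preliminary point is that $E:=\iint_{Q_T}\frac{|Du|^2}{(1+|u|)^{\delta+1}}\,dx\,dt$ must be known to be finite before any absorption is legitimate. Here I would exploit that a renormalized solution satisfies $H(t,x,Du)\in L^1(Q_T)$ by \cref{defrin2}. Choosing $\psi_n(u)=\phi_\delta(T_n(u))$ with $\phi_\delta(s)=\int_0^s(1+|r|)^{-1-\delta}\,dr$, so that $|\phi_\delta|\le\frac1\delta$ and $\psi_n'$ has compact support, the integration-by-parts formula of \cref{propint}, valid for the renormalized formulation when $\psi'$ has compact support (see the discussion after \cref{cont1}), gives after letting $t_1\to0$ and $t_2\to T$
\[
\alpha\iint_{\{|u|<n\}}\frac{|Du|^2}{(1+|u|)^{\delta+1}}\,dx\,dt\le\frac1\delta\Big(\|H(t,x,Du)\|_{L^1(Q_T)}+\|u_0\|_{L^1(\Omega)}\Big),
\]
since \eqref{A1} bounds the left-hand side from below and $|\psi_n|\le\frac1\delta$. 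Letting $n\to\infty$ by monotone convergence shows $E<\infty$, albeit with a constant still depending on $\|H(t,x,Du)\|_{L^1}$.

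\textbf{Absorption at the threshold.} It then remains to bound $\iint_{Q_T}|Du|^q$, hence $\|H\|_{L^1}\le\gamma\iint_{Q_T}|Du|^q$, in terms of $E$ and lower-order quantities only. Writing $|Du|^q=\big(\tfrac{|Du|^2}{(1+|u|)^{\delta+1}}\big)^{q/2}(1+|u|)^{\frac{q(\delta+1)}2}$ and applying Young's inequality with exponents $\big(\tfrac2q,\tfrac2{2-q}\big)$ gives, for any $\eps>0$,
\[
\iint_{Q_T}|Du|^q\,dx\,dt\le\eps\,E+C_\eps\iint_{Q_T}(1+|u|)^{\theta}\,dx\,dt,\qquad\theta=\frac{q(\delta+1)}{2-q}.
\]
The key algebraic fact is that $q<\frac{N+2}{N+1}$ forces $\frac{q}{2-q}<\frac{N+2}N$, so for $\delta$ small enough $\theta<\frac{N+2}N$. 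Setting $w=(1+|u|)^{\frac{1-\delta}2}$ one has $|Dw|^2\simeq\frac{|Du|^2}{(1+|u|)^{\delta+1}}$ and $\sup_t\into|w|^{\frac2{1-\delta}}\,dx\le M$, so the parabolic Gagliardo--Nirenberg inequality between $L^\infty(0,T;L^{2/(1-\delta)}(\Omega))$ and $L^2(0,T;H^1_0(\Omega))$ controls $\iint_{Q_T}(1+|u|)^\theta\le C+C\,E^{\kappa}$ with $\kappa<1$ precisely because $\theta<\frac{N+2}N$. Inserting these two bounds into the inequality of the previous step, and recalling that $E<\infty$, I would first absorb $\eps E$ (choosing $\eps$ small relative to $\alpha\delta/\gamma$) and then absorb $E^\kappa$ by Young, concluding $E\le M$ and $\iint_{Q_T}|Du|^q\le M$ with $M$ depending only on $u_0,T,|\Omega|,q,\gamma,N,\alpha,\delta$.

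\textbf{Marcinkiewicz estimates and the main obstacle.} Once $\|H\|_{L^1(Q_T)}\le M$ is known, testing with $T_k(u)$ and using \eqref{A1} gives the linear-in-$k$ energy bound $\alpha\iint_{Q_T}|DT_k(u)|^2\,dx\,dt\le k\big(\|u_0\|_{L^1}+\|H\|_{L^1}\big)\le Mk$, since $|T_k(u)|\le k$ and the primitive $\Theta_k$ of $T_k$ satisfies $\into\Theta_k(u_0)\le k\|u_0\|_{L^1}$. Combined with $\sup_t\|u(t)\|_{L^1}\le M$, the classical distribution-function argument of Boccardo--Gallou\"et then yields $u\in M^{\frac{N+2}N}(Q_T)$ and $|Du|\in M^{\frac{N+2}{N+1}}(Q_T)$ with the claimed bounds, all constants inheriting the dependence on an equi-integrable family of data through $k_0$. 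The main obstacle is the absorption step of the third paragraph: it closes only because $q<\frac{N+2}{N+1}$ keeps $\theta$ strictly below $\frac{N+2}N$, and because the a priori finiteness of $E$ extracted from $H\in L^1$ prevents circularity---without that finiteness one would be absorbing a possibly infinite quantity into itself.
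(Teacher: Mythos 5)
Your proof is correct, but it reaches the central weighted-energy estimate by a genuinely different route than the paper. The paper extracts \emph{both} the $L^\infty(0,T;L^1(\Omega))$ bound \emph{and} the bound on $\iint_{Q_T}|D G_k(u)|^2(1+|G_k(u)|)^{-1-\omega}\,dx\,dt$ directly from the contraction principle of \cref{lemGkL1}: there the superlinear term $\gamma\iint |DG_k(u)|^q|\Theta'(G_k(u))|$ is absorbed \emph{inside} the lemma, using the structural inequality \rife{condthe} for $\Theta$, the H\"older triple $\left(\frac2q,\frac{2^*}{2-q},\frac N{2-q}\right)$ with Sobolev's inequality, and a time-continuity bootstrap on the smallness of $\int_\Omega\Theta(G_k(u(t)))\,dx$. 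It then combines the weighted energy with the $L^1$ bound to get $u\in L^r((0,T);W^{1,r}_0(\Omega))$ for $r<\frac{N+2}{N+1}$ (citing Boccardo--Gallou\"et), which yields $H(t,x,Du)\in L^1(Q_T)$ quantitatively, and finally invokes known $L^1$-data results for the Marcinkiewicz norms. You instead take only the $L^1$ bound from \cref{lemGkL1} and re-derive the weighted energy by a self-contained two-step argument: first qualitative finiteness of $E$ from the definitional requirement \rife{sr12} that $H(t,x,Du)\in L^1(Q_T)$, then a Young plus parabolic Gagliardo--Nirenberg absorption in which the threshold $q<2-\frac N{N+1}$ enters as $\frac q{2-q}<\frac{N+2}N$ --- arithmetically the same condition as the paper's $\frac{N(q-1+\omega)}{2-q}<1$, just surfacing elsewhere. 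Your insistence on establishing $E<\infty$ \emph{before} absorbing is exactly the right point and avoids the circularity that would otherwise sink the argument; and since your final constant does not depend on $\|H(t,x,Du)\|_{L^1(Q_T)}$ but only on its finiteness, the estimate remains an honest a priori bound, uniform over equi-integrable families of data. What each approach buys: the paper's route is shorter given that \cref{lemGkL1} is already proved, and keeps all absorptions in one place; yours decouples the energy estimate from the fine structure of $\Theta$ and makes the Marcinkiewicz step explicit rather than cited. Two cosmetic remarks: you use the letter $\delta$ both for the smallness level of \cref{lemGkL1} and for the weight exponent, which should be disentangled; and your argument, like the paper's, directly produces the weighted bound only for small $\delta$, the case of general $\delta>0$ following by the monotonicity of $(1+|u|)^{-1-\delta}$ in $\delta$.
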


\begin{proof}
\cref{lemGkL1}  yields an estimate for $u$ in $\limitate1$ (since the function $\Theta$ has linear growth)  and an estimate for $\iint_{Q_T}\frac{|D G_k(u)|^2}{(1+|G_k(u)|)^{1+\omega}}\,dx\,dt$ for $k$ sufficiently large (from the energy term).   Next one estimates $ |D T_k(u)|$ in $\elle2$  in a  similar way as in \cref{lemGk}.

Finally, once we have an estimate for $\iint_{Q_T}\frac{|D u|^2}{(1+|u|)^{1+\omega}}\,dx\,dt$ with $\omega$ arbitrarily small, and  the fact that $u\in \limitate1$,  it follows an estimate of $u$ in $L^r((0,T); W^{1,r}_0(\Omega))$ for every $r<\frac{N+2}{N+1}$ (see e.g. \cite{BG}). As a consequence, we have that $H(t,x, D u)\in \parelle1$. The final conclusion of the Marcinkiewicz estimate is then a consequence of known results for equations with $L^1$- data (see  e.g. \cite{ST}). 
\end{proof}

\begin{remark}\label{anche}
Let us stress that the proof of \cref{lemGkL1} is actually performed for solutions $u\in \continue1\cap \paraccaloc$; combining this with the proof of \cref{estL1} is a way to show that any $u\in \continue1\cap \paraccaloc$ which satisfies \rife{1loc} enjoys the global regularity stated in \cref{estL1}, in particular $u\in M^{\frac{N+2}{N}}(Q_T)$, $|D u|\in M^{\frac{N+2}{N+1}}(Q_T)$ and $T_k(u)\in L^2(0,T;H^1_0(\Omega))$ for all $k>0$.
\end{remark}

We also deduce the contraction in $\elle\infty$ for this range of $q$.

\begin{corollary}\label{corinftyr2}
Assume \eqref{A1},  \eqref{H} with $1< q<2-\frac{N}{N+1}$ and the initial datum $u_0$ belonging to $L^{\infty}(\Omega)$. Moreover, let $u$ be a solution of \eqref{P} in the sense of Definition \eqref{defrin2}. Then, we have that $u$ belongs to $L^{\infty}(Q_T)$. Moreover, the following estimate holds:
\[
\|u(t)\|_{L^{\infty}(\Omega)}\le \|u_0\|_{L^{\infty}(\Omega)}\quad \forall\, t\in[0,T].
\]
\end{corollary}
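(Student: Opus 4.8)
The plan is to follow verbatim the strategy already used for \cref{corinfty} in the range $2-\frac N{N+1}<q<2$, simply replacing the power $|\cdot|^{\si}$ by the function $\Theta$ that governs the contraction principle \cref{lemGkL1} in the $L^1$-setting. Concretely, I would set $k:=\|u_0\|_{\elle\infty}$. Since $|u_0|\le k$ almost everywhere, we have $G_k(u_0)=0$ a.e., and therefore
\[
\into \Theta(G_k(u_0))\,dx = 0 < \delta
\]
for \emph{every} $\delta>0$. In particular the smallness hypothesis of \cref{lemGkL1} is satisfied for all $\delta<\delta_0$, where $\delta_0$ is the positive threshold produced by that Lemma.

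Applying \cref{lemGkL1} with this choice of $k$ then gives
\[
\into \Theta(G_k(u(t)))\,dx < \delta \qquad \forall\, t\in[0,T],
\]
and this holds for every $\delta<\delta_0$. Because the left-hand side is independent of $\delta$, I would let $\delta\to 0^+$ and conclude that $\into \Theta(G_k(u(t)))\,dx = 0$ for all $t\in[0,T]$.

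The only genuinely structural step is to turn the vanishing of this integral into a pointwise bound. Here I would use that $\Theta(s)=\int_0^{|s|}[1-(1+|r|)^{-\omega}]\,dr$ is nonnegative, even, and strictly increasing on $[0,\infty)$, since its derivative $1-(1+s)^{-\omega}$ is strictly positive for $s>0$; hence $\Theta(s)=0$ if and only if $s=0$. Consequently $\into \Theta(G_k(u(t)))\,dx=0$ forces $G_k(u(t))=0$ a.e. in $\Omega$, i.e. $|u(t)|\le k=\|u_0\|_{\elle\infty}$ almost everywhere, for every $t\in[0,T]$. Recalling from \cref{cont1} (equivalently \cref{equiv1}) that $u\in C^0([0,T];L^1(\Omega))$, this uniform-in-time pointwise bound upgrades to $u\in L^\infty(Q_T)$ and yields exactly $\|u(t)\|_{\elle\infty}\le\|u_0\|_{\elle\infty}$ for all $t\in[0,T]$.

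I do not expect any serious obstacle: the entire analytic content is already packed into \cref{lemGkL1}, and the argument is a clean mirror of \cref{corinfty}. The two points deserving a line of care are (i) that the choice $k=\|u_0\|_{\elle\infty}$ makes the smallness assumption hold trivially (immediate from $G_k(u_0)=0$, with $\delta$ arbitrarily small), and (ii) the strict monotonicity of $\Theta$, which is what lets the vanishing of the integral propagate to the pointwise estimate. Should one prefer a fully self-contained derivation avoiding a verbatim appeal to \cref{lemGkL1}, one could instead rerun its energy computation directly with $k=\|u_0\|_{\elle\infty}$, but this merely reproduces the same inequality and is unnecessary.
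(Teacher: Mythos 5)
Your proposal is correct and follows exactly the paper's own argument: choose $k=\|u_0\|_{\elle\infty}$ so that the smallness hypothesis of \cref{lemGkL1} holds for every $\delta<\delta_0$, let $\delta\to 0$ to get $\into \Theta(G_k(u(t)))\,dx=0$, and conclude via the strict positivity of $\Theta$ away from the origin. The only difference is that you spell out the final step (that $\Theta(s)=0$ forces $s=0$), which the paper leaves implicit.
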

\begin{proof}
We use again \cref{lemGkL1} with  $k=\|u_0\|_{L^{\infty}(\Omega)}$. In that case we obtain \rife{lemgk2} for any $\de>0$ arbitrarily small, and letting $\de\to 0$  we conclude that $\into \Theta(G_k(u(t)))dx=0$. This implies $\|u(t)\|_{L^{\infty}(\Omega)}\le k=\|u_0\|_{L^{\infty}(\Omega)}$.
\end{proof}

Finally, we prove the regularizing effect, in particular  all solutions become $L^2(\Omega)$ at positive time and therefore the decay estimates can then be deduced by the previous sections.

\begin{proposition}\label{Proprin1}
Assume \eqref{A1}-\eqref{A2} and \eqref{H} for $1< q< 2-\frac{N}{N+1}$ and $u_0\in \elle1$. Moreover, let $u$ be a solution of \eqref{P} in the sense of \cref{defrin2}. Then $\in C^0((0,T);\elle r)$ for every $r>1$ and   there exists a value $k_0$, independent of $r$, such that
\be\label{gkr1}
\|G_k(u(t))\|_{L^{r}(\Omega)}^{r}\le c_r\frac{\|G_k(u_0)\|_{L^{1}(\Omega)}^{r}}{t^{N\frac{(r-1)}{2 }}}\quad\forall\,\,t\in(0,T),\,\,\forall \,k\ge k_0
\ee
where $c_r=c(r,q,\alpha,\gamma,N)$. Furthermore
\be\label{decayu1}
\|u(t)\|_{L^{r}(\Omega)}^{r}\le \frac{c}{t^{N\frac{(r-1)}{2 }}}\quad\forall\,\,t\in(0,t_0]
\ee
where $c=c(r,q, \alpha,\gamma,N, t_0,u_0, |\Omega|)$. Finally, we have that $u\in L^\infty((\tau,T)\times \Omega))$ for any $\tau >0$ and satisfies
\[
\|u(t)\|_{L^{\infty}(\Omega)}\le c_\tau e^{-\lambda_1 t} \qquad \forall\, t>\tau>0\,.
\]
\end{proposition}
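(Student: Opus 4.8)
The plan is to transport the whole machinery of \cref{deca} to the $L^1$-framework, using \cref{lemGkL1} (the smallness of $\int_\Omega\Theta(G_k(u(t)))\,dx$ for $k$ large) in place of the $L^\sigma$-contraction \cref{lemGk}, and reading all the decay exponents with the formal value $\sigma=1$. The only genuinely new computation is the $L^1\!-\!L^r$ regularizing effect \rife{gkr1}, which is the analogue of \cref{Prop2r}; once it is secured, the $L^\infty$-bound and the long-time decay follow by reusing the later results of \cref{deca}.

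First I would prove \rife{gkr1}. By \cref{equiv1} and \cref{anche} the renormalized solution satisfies \rife{1loc} together with $T_k(u)\in L^2(0,T;H^1_0(\Omega))$, so the pointwise energy identity of \cref{propW11} is available for truncations. Fixing $k_0$ from \cref{lemGkL1} so that $\int_\Omega\Theta(G_k(u(t)))\,dx$ stays below a small threshold for every $k\ge k_0$, I would test with $\Phi'(u)=\int_0^{G_k(u)}(\varepsilon+|v|)^{r-2}\,dv$ and control the superlinear term $\gamma\int_\Omega|DG_k(u)|^q\,\Phi'(G_k(u))\,dx$ by H\"older with exponents $\left(\frac 2q,\frac{2^*}{2-q},\frac N{2-q}\right)$ followed by Sobolev, exactly along the chain \rife{3H}. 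The smallness of $\|G_k(u)\|_{L^1(\Omega)}$ (dominated by $\int_\Omega\Theta(G_k(u))\,dx$) is precisely what lets me absorb the gradient factor into the dissipation term, producing a differential inequality $y'(s)+\frac{c_r}{C_0}\,y(s)^{1+\rho}\le 0$ with $\rho=\frac{2}{N(r-1)}$; integrating this ODE yields \rife{gkr1}, and the splitting $u=G_k(u)+T_k(u)$ then gives \rife{decayu1}. Continuity $u\in C^0((0,T);L^r(\Omega))$ follows as in \cref{Prop2r} from the $L^1$-continuity combined with the equi-integrability in $L^r$ furnished by \rife{gkr1}, through Vitali's theorem.

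With \rife{decayu1} in hand, every positive time lands in a comfortable class: $u(\tau)\in L^r(\Omega)$ for all $r>1$, in particular $u(\tau)\in L^2(\Omega)$. From here I would close the $L^\infty$-bound and the decay by reusing \cref{deca} almost verbatim. For boundedness I would verify that $v=G_k(u)$ satisfies the pre-Moser inequality \rife{preMoser}: indeed \rife{2r} is derived from only \rife{A1}, \rife{H} and the identity $\frac{2(q-1)}{2-q}=\frac{2\sigma}{N}$, which remains valid even though now $\sigma<1$, so the potential $\psi=|G_k(u)|^{2\sigma/N}$ belongs to $L^\infty_{\mathrm{loc}}((0,T);L^\beta(\Omega))$ for some $\beta>\frac N2$ as soon as $u\in L^r_{\mathrm{loc}}$ with $r>\sigma$; applying \cref{moser} on intervals $(\tau,T)$ (legitimate with $m=2$ since $u(\tau)\in L^2(\Omega)$) gives $u\in L^\infty((\tau,T)\times\Omega)$. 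The exponential decay is then obtained as in \cref{propdec,sharp}: the $L^\infty$-contraction \cref{corinftyr2} makes $\|u(t)\|_{L^\infty(\Omega)}$ nonincreasing, a contradiction argument forces $\|u(t)\|_{L^\infty(\Omega)}\to0$, and the $L^2$-energy estimate with the sharp H\"older triple $\left(\frac 2q,\frac{2^*}{2-q},\frac N{2-q}\right)$ — in which the factor $\|u\|_{L^\sigma(\Omega)}^{q-1}$ becomes negligible for large $t$ — upgrades this to $\|u(t)\|_{L^\infty(\Omega)}\le c_\tau e^{-\lambda_1 t}$.

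The main obstacle I anticipate is Step~1 in the truly $L^1$ regime: unlike the $\sigma>1$ case, the solution carries no finite energy near the origin (only $\frac{|Du|^2}{(1+|u|)^{1+\omega}}$ is controlled, by \cref{estL1}), so the absorption and the test-function algebra must be performed at the level of $G_k(u)$ with $k\ge k_0$, where the large-value part does enjoy the required integrability, and one must check that the low-order quantities produced by the $\Theta$-estimate — rather than a clean $L^\sigma$-norm — are still strong enough to feed the $y'+y^{1+\rho}$ inequality. By contrast, the quasi-norm nature of $L^\sigma$ for $\sigma<1$ is harmless in the decay step, since every occurrence is a genuine integral $\int_\Omega|u|^\sigma\,dx$ of the (bounded at positive time) solution; hence no real difficulty arises there once the regularizing effect is established.
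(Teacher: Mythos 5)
Your proposal is correct and follows essentially the same route as the paper: the $L^1$--$L^r$ smoothing is obtained from the same $G_k$-level differential inequality (triple H\"older with exponents $\left(\frac 2q,\frac{2^*}{2-q},\frac N{2-q}\right)$, Sobolev embedding, absorption of the gradient term by the smallness of the $L^1$-tail of $u$ for $k\ge k_0$), integrated as the ODE $y'+c\,y^{1+\rho}\le 0$ with $\rho=\frac{2}{N(r-1)}$, after which the Moser iteration and the asymptotic arguments of \cref{deca} are recycled with $\sigma=1$. The only difference is organizational: the paper delegates the case $1<r\le 2$ to \cref{renreg}, which runs the estimate directly in the renormalized formulation (bounded test functions $S'$, valid down to $t=0$), whereas you pass through the equivalent energy formulation of \cref{equiv1} and work on interior intervals $(t_1,t_2)\subset(0,T)$ before letting $t_1\to 0$ --- the estimates are the same either way.
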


\proof The proof of \cref{renreg} shows how to get \rife{gkr1} for $1<r\leq 2$.  The case $r>2$ can be proved in the same way and actually even more simply; indeed,  for $r>2$ one can  use as test function $S'(\eta)= \int_0^\eta   |T_n(\xi)|^{r-2}d\xi$ which is allowed once we know that $u\in \paraccaloc$. Eventually, one ends up with estimates \rife{gkr1} and \rife{decayu1}. Once $u$ has been regularized in any $L^p$ space,  it is   possible to use what proved in the previous sections to conclude that the $L^\infty$-norm decays as $t\to \infty$. 
\endproof

\section{The case $N=2$ and $N=1$.}\label{N=}

This short section is devoted to the case of low dimensions $N=1$ and $N=2$. Of course, the arguments used before  also apply to those cases, provided the thresholds are modified according to the  peculiarity of the Sobolev  embedding of $H^1_0(\Omega)$ for $N=1,2$. To be precise,   the triplet of exponents used  for the H\"older inequality which  estimates the right-hand side,     both in \rife{psiu}  \cref{lemGk})  and in \rife{3H} (\cref{Prop2r}) should more generally read as $\left(\frac 2q, \frac s{2-q}, \frac{2s}{(2-q)(s-2)}\right)$, where $s>2$ is an exponent of Sobolev embedding, i.e.  $\|v\|_{\elle s} \leq c_S \|v\|_{H^1_0(\Omega)}$. Next, the estimate needs the equality  $\sigma= \frac{2s}{s-2}\,\frac{q-1}
{2-q}$ in order to be closed.

The reader can check that, if $N>2$, then $s= 2^*$ is the optimal choice, the above triplet becomes  $\left(\frac 2q, \frac {2^*}{2-q}, \frac{N}{(2-q)}\right)$ as appears in our mentioned proofs, and this also yields the value of $\sigma$ appearing in \rife{ID1}.  

On the other hand,  if $N=2$, $H^1_0(\Omega)$ is embedded in all Lebesgue spaces (but not in $\elle\infty$), which means that $s$ can be chosen arbitrarily large. This means that the value of $\sigma$ can be arbitrarily close to $ \frac{2(q-1)}{2-q}$ but strictly bigger.

If rather $N=1$ one is allowed to take $s=\infty$ in Sobolev embedding,   the triplet used in the above estimate should now read as $\left(\frac 2q, \infty, \frac{2}{(2-q)}\right)$ and the optimal value of $\sigma$ becomes $\sigma= \frac{2(q-1)}{2-q}$.

Notice that, in  both cases $N=1$ and $N=2$, the threshold of $L^1$-data becomes $q= \frac 43$. 

Once the value of $\sigma$ is modified, consistently with Sobolev's embedding as explained above, the rest of the proofs apply without changes. In the end, our results take the  following form in the case $N=1,2$.

\begin{theorem} Assume that \rife{A1}-\rife{A2} and \rife{H} hold true.

If $u_0$ satisfies 
$$
u_0 \in  \elle{\sigma}\,,\qquad \hbox{with}\quad
 \begin{cases}   
 \sigma >\ds \frac{2(q-1)}{2-q} & \hbox{if $N=2$ and $q\geq \frac43$}
 \\
  \sigma =\ds \frac{2(q-1)}{2-q} & \hbox{if $N=1$ and $q>\frac43$}
 \end{cases}
$$
then there exists a  weak solution of \rife{P} (in the sense of \cref{def1}). In addition, any solution satisfies the same conclusions of \cref{Prop2r}, \cref{bd,sharp}. 

If rather  $q< \frac43$  and   $u_0\in \elle1$, then  there exists a renormalized  solution of \rife{P} (see \cref{defrin2}). In addition, any  solution satisfies the same conclusions of \cref{Proprin1}.
\end{theorem}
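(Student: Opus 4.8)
The plan is to exploit the fact that the dimension $N$ entered the whole analysis of \cref{finen,deca,datil1} \emph{only} through the Sobolev embedding $H^1_0(\Omega)\hookrightarrow \elle{2^*}$, $2^*=\frac{2N}{N-2}$, which was invoked to close every a priori estimate — concretely, through the H\"older triplet $\left(\frac2q,\frac{2^*}{2-q},\frac N{2-q}\right)$ used in \rife{psiu}, in \rife{3H}, and in the proof of \cref{sharp}. I would therefore repeat those proofs verbatim, replacing $2^*$ by a generic Sobolev exponent $s>2$ for which $\|v\|_{\elle s}\le c_S\|v\|_{H^1_0(\Omega)}$, and keeping track of the algebraic relation between $s$, $q$ and $\si$ that makes each estimate closed.

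First I would redo the single decisive H\"older step. In the bound for the superlinear term $\into |DG_k(u)|^q\,|G_k(u)|^{\si-1}\,dx$ I would use the triplet $\left(\frac2q,\frac{s}{2-q},\frac{2s}{(2-q)(s-2)}\right)$; after applying Sobolev's inequality the estimate closes precisely when $\si=\frac{2s}{s-2}\,\frac{q-1}{2-q}$ (for $s=2^*$ this gives back $\si=\frac{N(q-1)}{2-q}$). For $N=1$ one may take $s=\infty$, since $H^1_0(\Omega)\hookrightarrow\elle\infty$, which forces the critical value $\si=\frac{2(q-1)}{2-q}$; for $N=2$ the embedding holds for every finite $s>2$ but fails for $s=\infty$, so a large finite $s$ yields any $\si>\frac{2(q-1)}{2-q}$, while the critical value itself is unreachable. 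With $\si$ fixed in this way, the contraction principle \cref{lemGk}, the a priori estimate \cref{sap}, the $\elle\infty$-contraction \cref{corinfty}, the exponential decay of $\|G_k(u)\|_{\elle\si}$ in \cref{corexp}, and the $L^\si$--$L^r$ regularizing effect \cref{Prop2r} go through with no further change, their only dimension-dependent ingredient being exactly the embedding just replaced.

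For the $\elle\infty$-regularizing effect and the sharp long-time decay I would argue analogously: in \cref{bd} the Moser iteration of \cref{moser} is run with the iteration exponent $\frac{N}{N-2}$ replaced by $\frac{s}{s-2}$ (a finite $s$ for $N=2$, whereas for $N=1$ the bound $u\in\elle\infty$ is immediate from $H^1_0\hookrightarrow\elle\infty$), and then \cref{propdec,sharp} are unaffected once the $\elle\si$-decay is in hand. The $L^1$-data range is delimited by the threshold $\si=1$, which now reads $\frac{2(q-1)}{2-q}=1$, i.e.\ $q=\frac43$; for $q<\frac43$ and $u_0\in\elle1$ I would repeat the renormalized analysis of \cref{datil1} (in particular \cref{lemGkL1,estL1,Proprin1}) with the same substitution of $s$ for $2^*$ in the H\"older triplet. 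Existence of at least one solution follows, as for $N>2$, from \cite{M} with the adapted value of $\si$.

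The only genuine obstacle is the dimension $N=2$: because $H^1_0(\Omega)$ does not embed into $\elle\infty$, one cannot take $s=\infty$, so the borderline exponent $\si=\frac{2(q-1)}{2-q}$ cannot be reached and one must accept any $\si$ strictly larger. This is the sole structural difference between the two low dimensions, and it accounts for the strict inequality in the statement for $N=2$ as opposed to the equality for $N=1$; apart from this point the arguments are identical to those already carried out for $N>2$.
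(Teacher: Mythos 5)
Your proposal is correct and follows essentially the same route as the paper: replacing the Sobolev exponent $2^*$ by a generic $s>2$ in the H\"older triplet $\left(\frac 2q,\frac{s}{2-q},\frac{2s}{(2-q)(s-2)}\right)$, closing the estimates via $\sigma=\frac{2s}{s-2}\,\frac{q-1}{2-q}$, taking $s$ arbitrarily large (finite) for $N=2$ and $s=\infty$ for $N=1$, and noting the $L^1$ threshold becomes $q=\frac43$. Your explicit remark on adapting the Moser iteration exponent $\frac{N}{N-2}$ to $\frac{s}{s-2}$ in the $L^\infty$ regularizing effect is a detail the paper leaves implicit under ``the rest of the proofs apply without changes,'' but it is the same argument.
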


Of course, similar remarks can be done as in the previous Sections concerning equivalent notions of solutions as well as other minor features. For example, if $N=2$, since one is obliged to take $\sigma > \frac{2(q-1)}{2-q}$, the limiting case $q=\frac43$ has been included (compare with \cref{q=}) and the constants of the a priori estimates can be chosen to depend only on the norm of initial data: indeed,  bounded sets in $\elle\sigma$ are always equi-integrable in $\elle{\tilde\sigma}$ for any  $\frac{2(q-1)}{2-q}<\tilde\sigma <\sigma$.  

\section{A comparison result for linear operators}\label{uni}

In this Section we show that the a  priori bounds obtained so far also imply the uniqueness of solutions  in the weak formulation of \rife{P}. In order to avoid several minor cases, we only restrict to $N>2$. The cases $N=1,2$ can be done exactly in the same way according to the thresholds specified in the previous Section.\\
For simplicity, we only consider the case  of linear operators, namely
\begin{equation}\tag{$P_{lin}$}
\label{Plin}
\begin{cases}
\begin{array}{ll}
\ds  u_t -\text{div}(A(t,x) D u)=H(t,x, D u) & \ds \text{in}\ Q_T,\\
\ds  u=0  &\ds  \text{on}\ (0,T)\times \partial\Omega,\\
\ds  u(0,x)=u_0(x)  &\ds \text{in}\  \Omega.
\end{array}
\end{cases}
\end{equation}
In order to prove    a comparison result between subsolutions and supersolutions of \eqref{Plin}, we follow an idea of \cite{LePo}. 

\begin{proposition}\label{compa}
Assume that $A(t,x)$ is a  bounded and coercive matrix, and let $H(t,x,\xi)$ be a Carath\'eodory function   such that
$$
H(t,x,\xi)= H_1(t,x,\xi)+ H_2(t,x,\xi)
$$
where 
\begin{eqnarray}
&\ds\xi\mapsto H_1(t,x,\xi)\qquad \hbox{is convex}
& \label{h11} 
\\
[2mm]
\ds
& H_1(t, x,\xi) \leq \gamma(1+ |\xi|^q)\qquad \hbox{
 with $1< q<2$}  
&\label{h12}
\end{eqnarray}
and $  H_2(t,x,\xi)$ satisfies 
\begin{eqnarray}
& |H_2(t,x,\xi)- H_2(t,x,\eta)|   \leq L |\xi-\eta|  & \label{h21}
\\
[2mm]
& H_2(t,x,\xi)- (1-\vep)H_2\left(t,x,\frac{\xi}{1-\vep}\right)  \leq 0 & \label{h22} 
\end{eqnarray}
for a.e. $(t,x)\in Q_T$, for every $ \xi,\eta \in \R^N$.

Let $u_{0}, v_0$ satisfy \eqref{ID1} (if $2-\frac N{N+1}<q<2$) or \rife{ID2} (if $q<2-\frac N{N+1}$),  and suppose $u$ and $v$ be a subsolution, respectively a supersolution, of \eqref{Plin} in the sense of \cref{def1} (if  $2-\frac N{N+1}<q<2$) or \cref{defrin2} (if   $q<2-\frac N{N+1}$). Then, if $u_0\leq v_0$, we have $u\le v$ in $Q_T$.

Finally, if $q= 2-\frac N{N+1}$, the same conclusion holds  if $u_{0}, v_0\in \elle\sigma$ for some $\sigma>1$ and $u,v$ be subsolution, respectively supersolution, of \eqref{Plin} in the sense of  \cref{def1}. 
\end{proposition}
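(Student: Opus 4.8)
The plan is to prove the comparison by a scaling--and--convexity argument in the spirit of \cite{LePo}, whose aim is to turn the superlinear difference $H(Du)-H(Dv)$ into a first order term with an (unbounded) coefficient that can then be controlled by the a priori estimates of \cref{finen,deca}. First I would fix $\varepsilon\in(0,1)$ and introduce the scaled function $w:=(1-\varepsilon)v$. Writing $Dv=\frac{Dw}{1-\varepsilon}$ and multiplying the supersolution inequality for $v$ by $(1-\varepsilon)>0$, I would use the convexity \rife{h11} in the form $H_1(Dw)\le (1-\varepsilon)H_1\left(\frac{Dw}{1-\varepsilon}\right)+\varepsilon H_1(0)$ together with the scaling hypothesis \rife{h22}, which gives $(1-\varepsilon)H_2\left(\frac{Dw}{1-\varepsilon}\right)\ge H_2(Dw)$, to deduce that $w$ is a supersolution of the perturbed problem
\[
w_t-\dive(A\,Dw)\ge H(Dw)-\varepsilon H_1(0).
\]
Since $|H_1(0)|\le \gamma$ by \rife{h12}, the perturbation is an $O(\varepsilon)$ source. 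The gain of this scaling is twofold: it creates the $\varepsilon$--room needed to absorb the error terms, and it relaxes the initial ordering only up to $O(\varepsilon)$, because from $u_0\le v_0$ one gets $\big(u_0-w(0)\big)_+=\big(u_0-(1-\varepsilon)v_0\big)_+\le \varepsilon\, v_0^+$.

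Next I would set $z:=u-w$ and subtract the sub/super inequalities. On $\{z>0\}$ the convexity of $H_1$ yields the linearization $H_1(Du)-H_1(Dw)\le DH_1(Du)\cdot Dz$, while the Lipschitz bound \rife{h21} gives $H_2(Du)-H_2(Dw)\le L|Dz|$, so that
\[
z_t-\dive(A\,Dz)\le DH_1(Du)\cdot Dz+L|Dz|+\varepsilon H_1(0)\qquad\text{on }\{z>0\}.
\]
Testing with $z_+$ (justified by the integration by parts formula of \cref{propint}, resp. its renormalized counterpart from \cref{datil1}), using coercivity \rife{A1}, Young's inequality and the gradient growth $|DH_1(\xi)|\le c(1+|\xi|^{q-1})$ coming from convexity together with \rife{h12}, I would reach an energy inequality
\[
\frac12\frac{d}{dt}\into z_+^2\,dx+\frac{\alpha}{2}\into |Dz_+|^2\,dx\le C\into\big(1+|Du|^{2(q-1)}\big)z_+^2\,dx+\varepsilon\gamma\into z_+\,dx.
\]

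The main obstacle is the term $\int(1+|Du|^{2(q-1)})z_+^2$, whose coefficient $|Du|^{2(q-1)}$ is unbounded near $t=0$; this is exactly where the sharp choice of $\sigma$ enters. I would treat it precisely as in \cref{lemGk,Prop2r}, applying H\"older with the triplet $\big(\tfrac2q,\tfrac{2^*}{2-q},\tfrac{N}{2-q}\big)$ and Sobolev's inequality, so that the a priori bound $\sup_t\|u(t)\|_{L^\sigma}+\|D[(1+|u|)^{\frac\sigma2-1}u]\|_{L^2(Q_T)}\le M$ of \cref{sap} (valid for the subsolution $u$ by \cref{rmksubsol}) absorbs the gradient factor and reduces the estimate to a Gronwall form $y'(t)\le \phi(t)\,y(t)+\varepsilon\,\psi(t)$ with $y(t)=\|z_+(t)\|_{L^2}^2$ and $\phi,\psi\in L^1(0,T)$. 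Integrating, and using $y(0^+)\le \varepsilon^2\|v_0^+\|_{L^2}^2$ from the initial estimate above, gives $\|z_+(t)\|_{L^2}\le C\varepsilon$ on $(0,T)$, whence $u\le v$ by letting $\varepsilon\to0$. To make the integrability of $\phi$ transparent in the genuinely unbounded regime, I would first run the estimate on $(\tau,T)$, where the regularizing effect of \cref{bd} (resp. \cref{Proprin1} for the renormalized case) makes $u,v$ bounded, and then send $\tau\to0$ using the continuity $u,v\in C([0,T];L^\sigma)$ (resp. $C([0,T];L^1)$).

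Finally, the three regimes are handled by the same scheme. For $2-\frac N{N+1}<q<2$ the computation is carried out in the class of \cref{def1}; for $1<q<2-\frac N{N+1}$ one works with the renormalized formulation of \cref{defrin2}, invoking \cref{equiv1} to justify the same integration by parts and energy identities on the truncations $T_k(z)$; and for the borderline $q=2-\frac N{N+1}$ with $u_0,v_0\in L^\sigma$, $\sigma>1$, the identical argument applies once $\sigma$ is adjusted as in \cref{q=,N=}, since all the a priori estimates of \cref{finen} are then again at our disposal.
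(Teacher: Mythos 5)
Your scaling step, your treatment of $H_2$ via \rife{h22}, and the observation that everything must reduce to a priori estimates that are uniform in $\eps$ all match the paper's strategy; but your treatment of the convex part $H_1$ is exactly the \emph{linearization} that the paper is designed to avoid, and it is the step that fails. After writing $H_1(t,x,Du)-H_1(t,x,Dw)\le DH_1(t,x,Du)\cdot Dz$, you face a linear parabolic inequality whose drift coefficient $b:=DH_1(t,x,Du)$ has size $|Du|^{q-1}$. For weak solutions of an equation with merely bounded measurable coefficients, the only information on $Du$ is the (weighted) energy bound of \cref{sap}, essentially $Du\in L^2_{loc}((0,T);L^2(\Omega))$: there is no Calderon--Zygmund or heat-kernel gradient estimate (this is precisely the point stressed in the introduction of the paper). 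Hence $b$ fails the Aronson--Serrin type integrability ($b\in L^r_t L^s_x$ with $\frac{N}{2s}+\frac1r\le\frac12$) needed to close a Gronwall argument: your coefficient $|Du|^{2(q-1)}$ lies only in $L^{1/(q-1)}$ of space-time, which is insufficient once $q$ is close to $2$. Concretely, the H\"older triplet $\bigl(\frac2q,\frac{2^*}{2-q},\frac{N}{2-q}\bigr)$ you invoke does not even apply: it is tailored to terms of the form $|Du|^q\cdot(\dots)$, so that the exponent $\frac2q$ reproduces the energy $|Du|^2$, whereas $2(q-1)\cdot\frac2q\neq 2$. Note also that the regularizing effect (\cref{bd}) bounds $u$, not $Du$, in $L^\infty$, so working on $(\tau,T)$ does not make your $\phi$ integrable; and the initialization $y(0^+)\le\eps^2\|v_0^+\|_{L^2}^2$ is unavailable when $\si<2$ (i.e. $q<2-\frac{N}{N+2}$) or for $L^1$ data, since then $v_0\notin L^2$ in general.

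The paper's proof uses convexity in a different, non-linearized way: from \rife{h11} one has $H_1(t,x,p)\le(1-\eps)H_1(t,x,r)+\eps H_1\bigl(t,x,\frac{p-(1-\eps)r}{\eps}\bigr)$, applied with $p=Du$, $r=Dv$. Thus, setting $z_\eps=u-(1-\eps)v$, the whole difference $H_1(t,x,Du)-(1-\eps)H_1(t,x,Dv)$ is bounded by $\eps H_1\bigl(t,x,\frac{Dz_\eps}{\eps}\bigr)$, with no derivative of $H_1$ and no unbounded coefficient. Combining this with your estimate for $H_2$ and with the growth \rife{h12}, and absorbing the Lipschitz term $L|Dz_\eps|$ by Young's inequality into a $q$-power plus a constant $c_1$, one finds that $\psi_\eps:=\frac{z_\eps}{\eps}-c_1t$ is itself a subsolution of a problem of type \rife{P} with Hamiltonian $c_0|\xi|^q$, \emph{uniformly in} $\eps$, and $\psi_\eps^+(0)\le v_0^+$ since $u_0\le v_0$. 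Then the $L^\si$ (resp. $L^1$) a priori estimate of \cref{sap} (resp. \cref{estL1}), which holds for nonnegative subsolutions by \cref{rmksubsol} and relies only on coercivity, gives $\|[u(t)-(1-\eps)v(t)]_+\|_{L^\si}\le\eps\,(M+c_1t|\Omega|^{1/\si})$ with $M$ independent of $\eps$, and letting $\eps\to0$ concludes. Replacing your linearization by this single convexity step is the missing idea; the rest of your outline (the scaling, the role of \rife{h22}, the $O(\eps)$ room in the initial data) is consistent with the paper's argument.
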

\proof
Define $v_{\eps}=(1-\eps)v$. Thus, $v_{\eps}$ satisfies: 
\[
 (v_{\eps})_t- \text{div}(A(t,x) D v_{\eps}) \ge(1-\eps)H(t,x, D v).
\]
Setting $z_{\eps}=u-v_{\eps}$, we deduce that  
\begin{equation}\label{z}
(z_{\eps})_t-\text{div}(A(t,x) D z_{\eps})\le H(t,x, D u)-(1-\eps)H(t,x, D v)\,.
\end{equation}
Since the convexity assumption implies that
\[
H_1(t,x,p)\le (1-\eps)H_1(t,x,r)+\eps H_1\biggl(t,x,\frac{p-(1-\eps)r}{\eps}\biggr)\quad \forall\, p,r\in \mathbb{R}^N,
\]
we estimate \eqref{z} as follows:
$$
(z_{\eps})_t-\text{div}(A(t,x) D z_{\eps})\le \eps H_1\left(t,x,\frac{ D z_\eps}\eps\right)+  H_2(t,x, D u) -(1-\eps)H_2(t,x, D v)\,.
$$
Using \rife{h21}-\rife{h22} and since $v= \frac{v_\eps}{1-\eps}$,  we have
$$
H_2(t,x, D u) -(1-\eps)H_2(t,x, D v) \leq H_2(t,x, D u)- H_2(t,x,Dv_\eps) \leq L | D z_\eps|\,.
$$
Therefore we conclude that
$$
(z_{\eps})_t-\text{div}(A(t,x) D z_{\eps})\le \eps H_1\left(t,x,\frac{ D z_\eps}\eps\right)
+ L | D z_\eps|\,,
$$
which implies, on account of \rife{h12}, the inequality below:
\begin{equation}\label{hat}
(\hat{z}_{\eps})_t-\text{div}(A(t,x) D \hat z_{\eps})\le \gamma \left(| D \hat z_{\eps}|^q + 1\right)+L  | D \hat z_{\eps}|\,
\end{equation}
where we have set $\hat {z}_{\eps}=\frac{z_\eps}{\eps}$. By Young's inequality, we get
$$
(\hat{z}_{\eps})_t-\text{div}(A(t,x) D \hat z_{\eps})\le \left(\gamma+\frac1q\right) | D \hat z_{\eps}|^q +  \left(\gamma+ \frac{L^{q'}}{q'}\right) \,.
$$
Hence, setting $c_0=\left(\gamma+\frac1q\right)$ and $c_1=  \left(\gamma+ \frac{L^{q'}}{q'} \right)$, we have found that the function $\psi_\eps(t):= \hat{z}_{\eps}- c_1 t$ satisfies
$$
(\psi_{\eps})_t-\text{div}(A(t,x) D \psi_{\eps})\le c_0 | D \psi_{\eps}|^q \,.
$$
In particular, we deal with subsolutions of \eqref{P}. We notice that, at fixed $\eps>0$,  $\psi_\eps(\cdot)$ belongs to $\continue{\sigma}\cap \paraccaloc$; moreover, the function $\psi_\eps^+(\cdot)$ enjoys the same regularity and is still a subsolution. In addition, we have $\psi_\eps^+(0) \leq v_0^+$, hence $\psi_\eps^+(0)$ is bounded in $\elle{\sigma}$ independently of $\vep$. By applying  \cref{sap} (if $2-\frac N{N+1}<q<2$) we deduce that there  exists  a constant $M$, independent of $\vep$, such that 
\[
\|[\psi_\eps(t)]^+\|_{L^{\sigma}} \le M\quad\forall\,\, t >0.
\]
The same holds true with $\sigma=1$ if $q<2-\frac N{N+1}$ thanks to \cref{estL1}.
Recalling the definition of $\psi_\eps(\cdot)$, the previous inequality implies
\[
\|[u(t)-(1-\eps)v(t)]_+\|_{L^{\sigma}} \le \eps [M + c_1 t \, |\Omega|^{\frac1{\sigma}}] \quad\forall t\ge 0
\]
which, letting $\eps\to 0$, leads to the conclusion. 

Finally, in the case $q=2-\frac N{N+1}$ we assumed that $u_0,v_0\in \elle\sigma$ for {\it some} $\sigma>1$. This is enough to reduce to the previous case since if \rife{h12} holds with $q=2-\frac N{N+1}$ then it  obviously holds for all bigger $q$.
\endproof

We conclude by pointing out  an easy class of nonlinearities $H(t,x,Du)$ for which the previous comparison principle applies.

\begin{corollary}\label{uniq-cor} Assume that $A(t,x)$ is a  bounded and coercive matrix, and  let $H(t,x,\xi)$ be a Carath\'eodory function which is $C^2$ with respect to $\xi$ (uniformly for $(t,x)\in Q_T$) and satisfies  
\begin{eqnarray}
&\exists R>0:\qquad \xi\mapsto H (t,x,\xi)\qquad \hbox{is convex for $|\xi|>R$}
& \label{H11} \\
[2mm]
&
H(t, x,\xi) \leq \gamma(1+ |\xi|^q)\qquad \hbox{
 with $1< q<2$.}  & \label{H12}
 \end{eqnarray}
Then the conclusion of \cref{compa} is true and, in particular, if $u_0$ satisfies \rife{ID1} or \rife{ID2}, problem \rife{Plin} has a unique weak solution.  
\end{corollary}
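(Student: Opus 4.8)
The plan is to deduce \cref{uniq-cor} from \cref{compa} by producing, out of the hypotheses \rife{H11}--\rife{H12}, a splitting $H=H_1+H_2$ fulfilling \rife{h11}--\rife{h22}, and then to get uniqueness by symmetry. Indeed, once \cref{compa} is available for \rife{Plin}, any two weak solutions $u,v$ sharing the same datum $u_0$ are each simultaneously a subsolution and a supersolution; applying the comparison to the pair $(u,v)$ and then to $(v,u)$ yields $u\le v$ and $v\le u$, hence $u=v$. Since existence of a weak solution is already granted by the results of the previous sections (and \cite{M}), this gives the uniqueness claim. So the whole matter reduces to constructing the decomposition.

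First I would exploit that $H$ is $C^2$ in $\xi$ uniformly in $(t,x)$: on the ball this gives $K:=\sup\{\|D^2_\xi H(t,x,\xi)\|:(t,x)\in Q_T,\ |\xi|\le R\}<\infty$, while for $|\xi|>R$ the convexity \rife{H11} forces $D^2_\xi H\ge 0$; by continuity one obtains $D^2_\xi H(t,x,\xi)\ge -K\,I$ for a.e.\ $(t,x)$ and every $\xi$. I would then subtract the fixed radial correction
\[
H_2(\xi):=-K\Big(R^2+\tfrac12\min\{|\xi|,R\}^2+R\,(|\xi|-R)_+\Big),
\]
and set $H_1:=H-H_2$. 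Writing $H_2(\xi)=h(|\xi|)$, one checks $|\nabla H_2(\xi)|=K\min\{|\xi|,R\}\le KR$, so $H_2$ is globally Lipschitz and \rife{h21} holds with $L=KR$. Moreover $D^2H_2=-K\,I$ on $\{|\xi|\le R\}$ and $D^2H_2\le 0$ on $\{|\xi|>R\}$; comparing with $D^2_\xi H\ge -K\,I$ (on the ball) and with $D^2_\xi H\ge 0\ge D^2H_2$ (outside) yields $D^2_\xi H_1=D^2_\xi H-D^2H_2\ge 0$ a.e., so $\xi\mapsto H_1(t,x,\xi)$ is convex, which is \rife{h11}. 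Since $-H_2$ grows only linearly and $q>1$, the bound $H\le \gamma(1+|\xi|^q)$ upgrades to $H_1\le \gamma'(1+|\xi|^q)$, giving \rife{h12}. The functions $H_1,H_2$ are Carath\'eodory, $H_2$ being independent of $(t,x)$.

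The delicate point, which I expect to be the real obstacle, is condition \rife{h22}: the correction had to be \emph{concave} in order to convexify $H$, whereas \rife{h22} is typically a consequence of convexity. The reconciling device is the constant downward shift $-KR^2$. Concretely, \rife{h22} is equivalent to $h(0)\le 0$ together with $r\mapsto h(r)/r$ being nondecreasing; here $h(0)=-KR^2\le 0$, and a direct computation gives $\frac{d}{dr}\big(h(r)/r\big)=KR^2/r^2-K/2>0$ on $(0,R]$ and $=KR^2/(2r^2)>0$ on $(R,\infty)$, so $h(r)/r$ is strictly increasing. Hence $h(s\rho)\le s\,h(\rho)$ for all $s\in(0,1)$, which is exactly \rife{h22}. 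With \rife{h11}--\rife{h22} verified, \cref{compa} applies to \rife{Plin}, and the uniqueness follows as explained above; the only checks left are the routine bookkeeping of constants in \rife{h12} and \rife{h21}, which are unaffected by the shift.
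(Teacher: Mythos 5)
Your proof is correct, and it follows the same overall strategy as the paper: reduce \cref{uniq-cor} to \cref{compa} by exhibiting a splitting $H=H_1+H_2$ satisfying \rife{h11}--\rife{h22}, then get uniqueness by the sub/supersolution symmetry argument. The difference is in the choice of the correction $H_2$. The paper takes the smooth radial function $H_2(\xi)=-\mu\sqrt{1+|\xi|^2}$, $H_1=H+\mu\sqrt{1+|\xi|^2}$, with $\mu$ large (e.g. $\mu\geq K(1+R^2)^{3/2}$, $K$ being your bound on $\|D^2_\xi H\|$ over $|\xi|\le R$, since the smallest eigenvalue of $D^2\sqrt{1+|\xi|^2}$ on that ball is $(1+R^2)^{-3/2}$): then \rife{h21} holds with $L=\mu$ because $|\nabla H_2|\le\mu$, \rife{h12} holds because $q>1$, and \rife{h22} is a one-line identity, since $(1-\vep)\sqrt{1+|\xi|^2/(1-\vep)^2}=\sqrt{(1-\vep)^2+|\xi|^2}\le\sqrt{1+|\xi|^2}$. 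Your piecewise quadratic/linear correction with the constant downward shift $-KR^2$ accomplishes the same thing, and your shift plays exactly the role of the ``$+1$'' under the paper's square root: both devices make $H_2(0)<0$, which is what lets a \emph{concave} correction satisfy the scaling inequality \rife{h22}; your criterion ``$h(0)\le 0$ and $h(r)/r$ nondecreasing'' makes this mechanism explicit, which is a nice by-product of your route. What the paper's choice buys is smoothness: your $H_1$ is only $C^1$ and piecewise $C^2$ across $\{|\xi|=R\}$, so concluding convexity from the a.e. Hessian inequality requires the (easy, but worth stating) remark that a $C^1$ function whose derivative along every line segment is nondecreasing is convex; also, your phrase ``by continuity'' is not quite the reason $D^2_\xi H\ge -K\,I$ holds for all $\xi$ --- it holds on the ball by the definition of $K$ and outside it because \rife{H11} gives $D^2_\xi H\ge 0\ge -K\,I$ there, which is in fact how you use it. These are presentational points, not gaps; both decompositions are valid and the conclusion follows in either case.
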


\proof As suggested in \cite{LePo}, it is enough to write $H= H_1+ H_2$, where $H_1(t,x,\xi)= H(t,x,\xi)+ \mu \sqrt{1+|\xi|^2}$ and $H_2(t,x,\xi)=- \mu \sqrt{1+|\xi|^2}$, choosing $\mu$ sufficiently large so that $H_1$ be a globally convex function of $\xi$.
\endproof

\section{On the optimality of the Lebesgue class of initial data}\label{optimalsec}

We conclude our analysis by discussing the optimality of condition  \eqref{ID1} on the initial data. We are actually going to see that similar results as proved in  \cref{finen} can not be obtained if $u_0$ belongs to subcritical Lebesgue spaces. Roughly speaking, this means that assuming $u_0$ in a Lebesgue space $L^{\ro}(\Omega)$ with $\ro<\si$  does not allow \eqref{P} to admit a  solution, at least in the suitable class. \\

For the sake of simplicity, let us consider the problem \eqref{P} with the Laplace operator and the $q$ power of the gradient in the r.h.s.:
\begin{equation}\label{D}\tag{$\overline{P}$}
\begin{cases}
\begin{array}{ll}
\ds u_t-\D u=\gamma |D u|^q &\ds \text{in }Q_T,\\
\ds u=0 &\ds \text{on }(0,T)\times \partial \Omega,\\
\ds u(0,x)=u_0(x)  &\ds  \text{in } \Omega.
\end{array}
\end{cases}
\end{equation}
Our goal is to show that there exists 
\begin{equation}\label{IDNE}\tag{$\overline{ID1}$}
u_0\in L^{\ro}(\Omega)\quad\text{with}\quad\ro<\si,
\end{equation}
such that  the problem \eqref{D}  does not admit a (renormalized) solution $u$ such that 
\begin{equation}\label{solne1}
u\in  L^\infty(0,T;L^{\ro}(\Omega)),\,\,|u|^\frac{\ro}{2}\in L^2(0,T;H_0^1(\Omega)).
\end{equation}
We recall that this class is natural in view of the summability of the initial datum and, in particular, existence and uniqueness in this class have been proved if $\rho \geq \si$, see \cite{M} (for the existence) and  \cref{uni} (for the uniqueness) respectively. Therefore, the counterexample  below  is an evidence of the optimality of the above results.  

Here  we follow the lines of  \cite[Subsection $3.2$]{BASW}, where  a similar counterexample was proved for the case of $\Omega= \R^N$ and in a  different formulation.

\begin{theorem}
Let $1\le \ro<\sigma$. There exists a value $\de>0$ (only depending on $\rho, q, N$) such that  if $u_0(x)\geq |x|^{-\frac{N}{\ro}+\delta}\chi_{|x|<1}$, then \eqref{D} does not admit any renormalized solution (in the sense of \rife{sr1}-\rife{sr2} of \cref{defrin}) such that \eqref{solne1} hold.
\end{theorem}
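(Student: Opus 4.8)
The plan is to argue by contradiction: suppose that, for the datum prescribed below, problem \eqref{D} admits a renormalized solution $u$ satisfying \eqref{solne1}. I first fix the free parameter by requiring
\begin{equation*}
0<\delta<\frac{N(\si-\ro)}{\ro\,\si}\,,
\end{equation*}
so that the exponent $b:=\tfrac{N}{\ro}-\delta$ satisfies $\tfrac{N}{\si}<b<\tfrac{N}{\ro}$. The right inequality guarantees $u_0\in\elle\ro$ (since $|x|^{-b}\in\elle\ro$ near the origin iff $b\ro<N$), while the left one, recalling that $\tfrac{N}{\si}=\tfrac{2-q}{q-1}$ is precisely the scaling exponent of \eqref{D}, makes the singularity of $u_0$ \emph{supercritical}; in particular $u_0\notin\elle\si$, because $|x|^{-b}\in\elle\si$ would require $b<\tfrac{N}{\si}$.

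The first step is a pointwise lower bound by comparison. Since $\ga|Du|^q\ge0$, the solution $u$ is a nonnegative supersolution of the heat equation with datum $u_0\ge0$, so the comparison principle gives $u(t,x)\ge v(t,x):=\bigl(e^{t\D}u_0\bigr)(x)\ge0$, where $e^{t\D}$ is the Dirichlet heat semigroup. Localizing in a ball $B_{r_0}\subset\Omega$ and using the interior Gaussian lower bounds for the heat kernel, I would show that $v$ keeps the profile of its datum, namely $v(t,x)\ge c\,|x|^{-b}$ on an annulus $\{C\sqrt t\le |x|\le r_0\}$ for small $t$; consequently $\|v(t)\|_{\elle\si}\to+\infty$ as $t\to0^{+}$, consistently with $\|u_0\|_{\elle\si}=+\infty$.

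The decisive step, following the lines of \cite[Subsection 3.2]{BASW}, is to show that the superlinear gradient term amplifies this singularity until $u$ is ejected from the class \eqref{solne1}. Using the Duhamel representation $u(t)=v(t)+\ga\int_0^t e^{(t-s)\D}|Du(s)|^q\,ds$, a profile lower bound $u(s,\cdot)\gtrsim|x|^{-b_k}$ forces a matching gradient bound $|Du(s,x)|\gtrsim|x|^{-b_k-1}$ on the good annulus, and inserting $|x|^{-(b_k+1)q}$ into the source and smoothing by the semigroup produces a strictly more singular bound $u(t,\cdot)\gtrsim|x|^{-b_{k+1}}$ with
\begin{equation*}
b_{k+1}=q\,(b_k+1)-2\,.
\end{equation*}
This map has the repelling fixed point $b^{\ast}=\tfrac{2-q}{q-1}=\tfrac{N}{\si}$ and multiplier $q>1$, so starting from $b_0=b>b^{\ast}$ the iterates increase, $b_k\uparrow+\infty$; after finitely many steps $b_k>\tfrac{N}{\ro}$, whence $u(t)\notin\elle\ro$ (equivalently $\ga|Du|^q\notin L^1(Q_T)$, violating \eqref{sr1}), contradicting \eqref{solne1}.

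I expect the main obstacle to be exactly the transfer of the singular \emph{lower} bound through the nonlinearity, that is, passing from $u\gtrsim|x|^{-b_k}$ to the gradient lower bound $|Du|\gtrsim|x|^{-b_k-1}$: the comparison principle controls $u$ from below but carries no information on $Du$, and monotone iteration fails because $u\ge w$ does not imply $|Du|\ge|Dw|$. The resolution I would pursue is to reduce to radially nonincreasing $u_0$ (so that $u(t,\cdot)$ is radial and nonincreasing for each $t$, by symmetrization and comparison) and to combine the profile lower bound with interior parabolic regularity to bound the radial derivative $|u_r|$ from below on the annulus, at least in an integrated sense sufficient for the Duhamel estimate. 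A secondary, more routine point is the Dirichlet boundary in the kernel estimates, handled by confining all arguments to the interior ball $B_{r_0}$ where the singularity of $u_0$ is concentrated.
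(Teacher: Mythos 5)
Your opening reduction (the choice of $\de$ making $u_0\in\elle\ro\setminus\elle\si$) and your first step (comparison from below with the Dirichlet heat semigroup, via Gaussian-type kernel lower bounds as in \cite{Z}) are sound and coincide with the first half of the paper's argument. The genuine gap is your ``decisive step''. The iteration $b_{k+1}=q(b_k+1)-2$ requires passing from a pointwise lower bound $u\gtrsim |x|^{-b_k}$ to a gradient lower bound $|Du|\gtrsim |x|^{-b_k-1}$, and, as you yourself observe, no such implication exists: lower bounds on a function carry no lower-bound information on its derivatives. The repairs you sketch cannot close this. Interior parabolic regularity yields only \emph{upper} bounds on derivatives; and even for a radial, radially nonincreasing solution, $u\ge c|x|^{-b_k}$ on an annulus is perfectly compatible with $u_r\equiv 0$ there, because the solution may simply be larger and flat --- to force the profile (and hence oscillation) you would need a matching \emph{upper} bound on $u$, which is exactly what is unavailable at this stage. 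Two further steps are also unjustified for renormalized solutions in the class \eqref{solne1}: the Duhamel (mild) representation, and the claimed equivalence between $u(t)\notin\elle\ro$ and $\gamma|Du|^q\notin L^1(Q_T)$. So the amplification mechanism, which carries the whole weight of your proof, does not go through.

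The paper's proof shows that no amplification by the nonlinearity is needed: only its sign is used (so that $u$ is a supersolution of the heat equation and the kernel lower bound applies). The decisive ingredient is that the assumed regularity \eqref{solne1} is not just the target of the contradiction but the source of a quantitative \emph{upper} bound on mass concentration. Indeed, testing the renormalized formulation with $|u|^{\ro-1}$ gives \eqref{solne2}, i.e. $|u|^{\frac{\ro+q-1}{q}}\in L^q(0,T;W^{1,q}_0(\Omega))$; hence there is a sequence $t_j\to 0$ with $\|D(|u(t_j)|^{\frac{\ro+q-1}{q}})\|_{L^q(\Omega)}^q\le 1/t_j$, and H\"older plus Sobolev on the small ball yield
\[
\int_{\{|x|<\sqrt{t_j}\}} u(t_j,x)\,dx \le c\, t_j^{\frac N2\left(1-\frac{N+2-q}{N(\ro+q-1)}\right)}.
\]
Comparing with the heat-kernel lower bound $\int_{\{|x|<\sqrt{t_j}\}} u(t_j,x)\,dx \ge c\, t_j^{\frac N2(1-\frac1\ro)+\frac\de2}$, and using $\ro<\si$ to pick $\de$ so small that $\frac N2(1-\frac1\ro)+\frac\de2<\frac N2\bigl(1-\frac{N+2-q}{N(\ro+q-1)}\bigr)$, one reaches a contradiction as $t_j\to 0$. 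If you want to salvage your write-up, keep your Step 1 and replace the entire iteration by this single upper-bound estimate extracted from \eqref{solne2}; this is the route of \cite{BASW} that the paper follows.
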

\vskip0.3em
In particular, we deduce that for some $u_0\in   L^{\ro}(\Omega)$, where $1\le \ro<\sigma$, no solution can exist of problem \rife{D}.
\vskip0.3em
\begin{proof}
Suppose, by contradiction, that there exists a solution $u$ as in \rife{sr1}--\rife{sr2} and satisfying \eqref{solne1}, under the assumption that  $\ro>1$. The case $\ro=1$ can be dealt with in  a similar  way.\\

\noindent
{\it Step 1} \\
\noindent
Note that such a solution will also satisfy that 
\[
\iint_{Q_T} |D u|^q|u|^{\ro-1}\,dx\,dt<\infty,
\]
which implies 
\begin{equation}\label{solne2}
|u|^{\frac{\ro+q-1}{q}}\in L^q(0,T;W^{1,q}_0(\Omega))\,.
\end{equation}
Roughly speaking, the boundedness of the above integral follows by taking $|u|^{\rho-1}$ in the renormalized formulation in $(t_0,T)$, obtaining that
$$
\gamma \int_{t_0}^T\into  |D u|^q|u|^{\ro-1}\,dx\,dt \leq (\rho-1)\iint_{Q_T} | D u|^2  |u|^{\ro-2}dx\,dt+ \frac1{\rho} \|u\|_{L^\infty(0,T;L^{\ro}(\Omega))}
$$
and then letting $t_0\to 0$. Of course, a  standard procedure is followed in order to justify the test function in the renormalized formulation (as in the proof of \cref{equivren}); in addition,  when $\rho<2$, one suitably modifies the test function near $u=0$ in an obvious way. \\

\noindent
{\it Step 2} \\
\noindent
Let $t$ be small and fixed. The estimates of the heat kernel in a bounded domain (see e.g.  \cite[Theorem $1.1$]{Z}) imply that the solution of the corresponding heat problem 
$$
\begin{cases}
\begin{array}{ll}
\ds U_t-\D U=0 &\ds \text{in }Q_T,\\
\ds U=0 &\ds \text{on }(0,T)\times \partial \Omega,\\
\ds U(0,x)=u_0(x)  &\ds  \text{in } \Omega.
\end{array}
\end{cases}
$$
satisfies the following bound from below:
\begin{align*}
U(t,x)\ge c_1\int_{\{\frac{\sqrt{t}}{2}<|y|<\sqrt{t}\}}t^{-\frac{N}{2}}e^{-c_2\frac{|x-y|^2}{t}}|y|^{-\frac{N}{\ro}+\delta}\,dy\ge ct^{-\frac{N}{2\ro}+\frac{\delta}{2}}\qquad \forall x\,: |x|\leq \sqrt t\,,
\end{align*}
for some positive constants $c_1$, $c_2$. Then, this estimate yields 
\begin{equation}\label{U}
\int_{\{|x|<\sqrt{t}\}} U(t,x)\,dx\ge c\, t^{\frac{N}{2}(1-\frac{1}{\ro})+\frac{\delta}{2}}
\end{equation}
where, here and below, $c$ denotes possibly different constants independent of $\de$, $t$.
%\underline{Step $2$}\\
\\
\noindent
We now look for a bound from above for a suitable integral of the solution of \eqref{D}. The boundedness \eqref{solne2} ensures that there exists a sequence $\{t_j\}_j$, converging to zero for $j\to \infty$, such that
\begin{equation}\label{tj}
\int_{\Omega}|D u(t_j)|^q|u(t_j)|^{\ro-1}\,dx=\|D (|u(t_j)|^{\frac{\ro+q-1}{q}})\|_{L^q(\Omega)}^q\le \frac{1}{t_j}.
\end{equation}
By H\"older's inequality with indices $\left(q^*\frac{\ro+q-1}{q}, \left(q^*\frac{\ro+q-1}{q} \right)'\right)$ and \eqref{tj} we go further estimating as below:
\begin{align*}
\int_{\{|x|<\sqrt{t_j}\}}u(t_j,x)\,dx &\le \|u(t_j)\|_{L^{q^*\frac{\ro+q-1}{q}}(\Omega)}
\meas\left\{|x|<\sqrt{t_j}\right\}^{1-\frac{q}{q^*(\ro+q-1)}}\\
&\le c\|D (|u(t_j)|^{\frac{\ro+q-1}{q}})\|_{L^q(\Omega)}^{\frac{q}{\ro+q-1}}\,
t_j^{\frac{N}{2}\left(1-\frac{q}{q^*(\ro+q-1)}\right)}\\
&\le c t_j^{-\frac{1}{\ro+q-1}+\frac{N}{2}\left(1-\frac{N-q}{N(\ro+q-1)}\right)}\\
&=ct_j^{\frac{N}{2}\left(1-\frac{N+2-q}{N(\ro+q-1)}\right)}.
\end{align*}

\noindent
{\it Step 3} \\
\noindent
In conclusion, Step $2$ provides us with the inequality
\begin{equation*}
\int_{\{|x|<\sqrt{t_j}\}}u(t_j,x)\,dx\le ct_j^{\frac{N}{2}\left(1-\frac{N+2-q}{N(\ro+q-1)}\right)}
\end{equation*}
while standard comparison results for the heat equation ensure that
\[
\int_{\{|x|<\sqrt{t}\}}u(t,x)\,dx\ge\int_{\{|x|<\sqrt{t}\}} U(t,x)\,dx\ge c\, t^{\frac{N}{2}\left(1-\frac{1}{\ro}\right)+\frac{\delta}{2}}.
\]
Comparing those inequalities we deduce that
\be\label{tj2}
c\, t_j^{\frac{N}{2}\left(1-\frac{1}{\ro}\right)+\frac{\delta}{2}} \leq \int_{\{|x|<\sqrt{t_j}\}}u(t_j,x)\,dx\le ct_j^{\frac{N}{2}\left(1-\frac{N+2-q}{N(\ro+q-1)}\right)}.
\ee
Since $\rho<\sigma$, there exists $\de>0$ (only depending on $\rho$ and $\si$) such that  
$$
1-\frac{1}{\ro}< 1-\frac{N+2-q}{N(\ro+q-1)}-\frac \de N\,.
$$
But this implies that 
$$
\frac{N}{2}\left(1-\frac{1}{\ro}\right)+\frac{\delta}{2} <\frac{N}{2}\left(1-\frac{N+2-q}{N(\ro+q-1)}\right)
$$
and letting $t_j\to 0$ in \rife{tj2} we get a  contradiction.
\end{proof}

\begin{remark}
The renormalized formulation of the equation is only one possible choice of weak formulation for which  a similar nonexistence result can be proved. We could have equally replaced it by asking that $u\in L^2_{loc}((0,T);H^1_0(\Omega))\cap \continue{\rho}$, $u(0)=u_0$ in $\elle{\rho}$,  and the equation be understood in distributional sense. The above nonexistence result would still hold in this formulation and the proof be the same as before. 
\end{remark}

%\appendixname{\textbf{{Appendix}}}

%\begin{appendices}
%
%\makeatletter
%\def\@makeappendixhead#1{%
%\null\vfill%
%{\parindent \z@ \centering \normalfont
%    \ifnum \c@secnumdepth >\m@ne
%      \if@mainmatter
%        \MakeUppercase \@chapapp\space \thechapter
%        \par\nobreak
%        \vskip 20\p@
%      \fi
%    \fi
%    \interlinepenalty\@M
%    #1\par\nobreak
%    \vfill
%    \clearpage
%  }}
%\g@addto@macro\appendices{\let\@makechapterhead\@makeappendixhead}
%\makeatother

%%\appendixname{\textbf{{Appendix}}}

\appendix

\section{}\label{app}

Here we present the proof of the equivalence between possibly different notions of solution of \rife{P}.
\vskip1em

\begin{proof}[Proof of \cref{defequiv}.]
Let $u$ be a solution according to \cref{def1}. We  already proved in Step 1 of \cref{lemGk} that  $(1+| u|)^{\frac{\si}{2}-1}u\in L^2(0,T; H^1_0(\Omega))$, i.e. $u$ satisfies \rife{beta}. Then, since $\sigma\geq 2$ and \rife{A2} holds,   it becomes a standard fact that the formulation can be extended up to $t=0$ and so $u$ is also a solution according to \cref{def2}.

Conversely, we assume now that $u$ is a solution according to \cref{def2} and we wish to prove that it also satisfies \cref{def1}. 
To this purpose, we use  $\psi(u)=|T_n(G_k(u))|^{\sigma-2}T_n(G_k(u))$ as test function in the interval $(0,t)$. Notice that this is allowed because $u\in L^2(0,T; H^1_0(\Omega))$ and in this case the conclusion of \cref{propint} is true up to $t_1=0$.
Thanks to the assumptions \eqref{A1} and to \eqref{H} we have:
\beac\label{c}
\ds\int_{\Omega}S_n(G_k(u(t)))\,dx+
\alpha(\sigma-1)
\iint_{Q_t} | D T_n(G_k(u)) |^2 |T_n(G_k(u))|^{\sigma-2}\,dx\,ds\\
[3mm]\ds
\le\gamma  \iint_{Q_t} | D G_k(u)|^q |T_n(G_k(u))|^{\sigma-1}\,dx\,ds
+\int_{\Omega}S_n(G_k(u_0))\,dx
,
\eeac
\noindent
where $S_n(v)=\int_0^v |T_n(z)|^{\sigma-2}T_n(z)\,dz$.
We estimate the first integral in the r.h.s. using H\"older's inequality with indices $\left(\frac{2}{q}, \frac{2^*}{2-q},\frac{N}{2-q}\right)$ and then Sobolev's embedding, getting
\begin{equation}\label{a}
\begin{array}{c}
\ds
\iint_{Q_t} | D G_k(u)|^q |T_n(G_k(u))|^{\sigma-1}\,dx\,ds \\
[3mm]\ds
\le \frac{2^q}{\si^q}\int_0^t\biggl[\biggl(\int_{\Omega} | D [|G_k(u)|^{\frac{\si}{2}}]|^2 \,dx\biggr)^{\frac{q}{2}}\biggl(\int_{\Omega}  |T_n(G_k(u))|^{\frac{2^*}{2}\si}\,dx\biggr)^{\frac{2-q}{2^*}}
\biggl(\int_{\Omega}  |T_n(G_k(u))|^{\sigma}\,dx\biggr)^{\frac{2-q}{N}}\biggr]\,ds
\\
[3mm]\ds\le 2^q\frac{c_S}{\si^q}\int_0^t\biggl(\int_{\Omega} | D [|G_k(u)|^{\frac{\si}{2}}]|^2 \,dx\biggr)\biggl(\int_{\Omega}  |T_n(G_k(u))|^{\sigma}\,dx\biggr)^{\frac{2-q}{N}}\,ds.
\end{array}
\end{equation}
In particular, this means that the following inequality 
\begin{equation}\label{b}
\begin{array}{c}
\ds
\int_{\Omega}S_n(G_k(u(t)))\,dx\\
 [3mm]\ds
\le 2^q\frac{c_S\gamma}{\si^q}\int_0^T\biggl(\int_{\Omega} | D [|G_k(u)|^{\frac{\si}{2}}]|^2 \,dx\biggr)\biggl(\int_{\Omega}  |T_n(G_k(u))|^{\sigma}\,dx\biggr)^{\frac{2-q}{N}}\,ds+\int_{\Omega}S_n(G_k(u_0))\,dx
\end{array}
\end{equation}
holds. Since we have $|u|^{\frac{\si}{2}}\in L^2(0,T;H_0^1(\Omega))$ and $\int_{\Omega}S_n(G_k(u))\,dx\ge c\int_{\Omega} |T_n(G_k(u))|^{\sigma}\,dx $ where $c=c(N,q)$, the inequality in \eqref{b} implies that
\begin{equation*} %\label{eq:pren}
\sup_{t\in(0,T)}\int_{\Omega}  |T_n(G_k(u(t)))|^{\sigma}\,dx\le c_0+ c\biggl(\sup_{t\in(0,T)}\int_{\Omega}  |T_n(G_k(u(t)))|^{\sigma}\,dx\biggr)^{\frac{2-q}{N}},
\end{equation*}
where the constants $c$ and $c_0$ depend on, respectively, the $L^2(0,T;H_0^1(\Omega))$ norm of $|u|^\frac{\si}{2}$ and on the $L^{\sigma}(\Omega)$ norm of the initial datum. Hence, we get a uniform bound in $n$ for $\int_{\Omega}|T_n(G_k(u(t)))|^{\sigma}\,dx$. Letting $n$ tend to infinity, we have proved that $G_k(u)\in L^{\infty}(0,T;L^{\sigma}(\Omega))$ which in turn implies $u\in L^{\infty}(0,T;L^{\sigma}(\Omega))$.\\
Moreover, as $n\to \infty$ in \eqref{b}, we get
\begin{equation}\label{gk0}
\int_{\Omega}|G_k(u(t))|^{\sigma}\,dx\le \int_{\Omega}|G_k(u_0)|^{\sigma}\,dx+c\int_0^T \int_{\Omega} | D [|G_k(u)|^{\frac{\si}{2}}]|^2 \,dx\,ds
\end{equation}
where $c$ is a constant due to the $L^{\infty}(0,T;L^{\sigma}(\Omega))$ norm of $G_k(u)$. Having $|u|^\frac{\si}{2}\in L^2(0,T;H_0^1(\Omega))$ the right-hand side in \rife{gk0} tends to zero as $k\to \infty$. Hence  $\int_{\Omega}|G_k(u(t))|^{\sigma}\,dx\to 0 $ for  $k\to \infty$ and this fact allows us to deduce the continuity regularity $C([0,T];L^{\sigma}(\Omega))$ of $u$. Indeed, let $\{t_n\}_{n}$ be a sequence such that $t_n\to t$ for $n\to \infty$, for $t\in [0,T]$. Then  $u(t_n)$ converges to $u(t)$ in $L^1(\Omega)$ when $n\to\infty$ by \cref{rmkct}. Moreover, $u(t)$ is (uniformly in $t$) equi-integrable in the $L^{\sigma}(\Omega)$ norm. To convince us in this sense, we estimate
\begin{align*}
\sup_{t\in (0, T)}\int_E |u(t)|^{\sigma}\,dx&\le c\sup_{t\in (0, T)}\biggl[\int_{E\cap \{|u(t,x)|>k\}} |u(t)|^{\sigma}\,dx+
\int_{E\cap \{|u(t,x)|\le k \}} |u(t)|^{\sigma}\,dx\biggr]\\
&\le c \biggl[\sup_{t\in (0,T)}\int_{\Omega} |G_k(u(t))|^{\sigma}\,dx+k^{\sigma}|E|\biggr]
\end{align*}
where $c=c(N,q)$.
Thus, letting $|E|\to 0$ and then $k\to \infty$, and using \rife{gk0}, we have the desired equi-integrability condition verified. Finally, by Vitali's theorem, we deduce that $u(t_n)\to u(t)$ in $\elle\sigma$. Hence $u\in \continue {\sigma}$, and therefore it is also a  solution in the sense of \cref{def1}.
\end{proof}

\vskip1em
We  now give the proof of \cref{equivren}. To this purpose, we 
define   the function $\te_n(\cdot)$ as
\begin{equation}\label{suppcmpt}
\theta_n(v)=
\begin{cases} 
\begin{array}{ll}
\ds 1&\ds  |v|\le n,\\
\ds \frac{2n-|v|}{n}&\ds  n<|v|\le 2n,\\
\ds 0 &\ds |v|>2n
\end{array}
\end{cases}
\end{equation}
and we notice  that it is a compactly supported function which converges to $1$ as $n\to \infty$. This is  currently used in the renormalized formulation in order to recover, asymptotically, the case that  the auxiliary function $S$ in \rife{sr2} may have non compact support.

\vskip1em

\begin{proof}[Proof of \cref{equivren}.]

 Let $u$ be a solution according to \cref{def1}. From \cref{lemGk}, Step 1, we already know that $u$ satisfies \rife{beta}. 
Then,  $(1+ |u|)^{\frac\sigma2} \in \limitate2 \cap L^2(0,T; L^{2^*}(\Omega))$, which implies, by interpolation, that $u\in L^{\sigma \frac{N+2}N}(Q_T)$. Using that, by Young's inequality,  
$$
|Du|^q \leq c\left[|D((1+ |u|)^{\frac\sigma2})|^2 + (1+ |u|)^{\frac{(2-\sigma)q}{2-q}}\right]   
$$
 we deduce that $|Du| \in L^q(Q_T)$ since $\frac{(2-\sigma)q}{2-q}\leq \sigma \frac{N+2}N$. Hence $H(t,x,Du)\in L^1(Q_T)$. Moreover, as a direct consequence of \rife{beta}, we have that $T_k(u)\in L^2(0,T; H^1_0(\Omega))$ for every $k>0$.
Now it is enough to observe that, being $u\in L^2_{loc}(0,T; H^1_0(\Omega))$,  the renormalized formulation  holds in a standard way for $t\in (\tau, T)$, $\tau >0$. This means that  \rife{sr2} holds in $Q_{\tau,T}:= (\tau,T)\times \Omega$, and letting $\tau \to 0$ is allowed, providing with \rife{sr2} in $Q_T$. So \cref{defrin} is proved to hold.
 \vskip0.5em
Conversely, let $u$ be a solution according to \cref{defrin}. We wish to prove that solutions belong to $\continue\si$ and become regular at positive time. 

First of all, we remark that,  by a density argument,  the class of test functions can be extended to include any $\vfi\in L^2(0,T;H_0^1(\Omega))\cap  L^{\infty}(Q_T)$ such that $\vp_t\in L^{2}(0,T;H^{-1}(\Omega))$.  We take  $S'(u) =\int_0^{T_n(G_k(u))}(\eps+|z|)^{\sigma-2}\,dz$, $\vp=1$ and $\eps>0$, in \eqref{sr2} for $0\le t\le T$ and say that this step can be made rigorous thanks to the regularity \eqref{beta} and by standard arguments in the renormalized framework (for instance, we can consider  $S'(u)=\te_n(u)\int_0^{T_n(G_k(u))}(\eps+|z|)^{\sigma-2}\,dz$ where $\te_n(\cdot)$ is defined in \eqref{suppcmpt}, and let $n$ go to infinity).

We proceed recalling \eqref{A1} and \eqref{H} which give us the following inequality:
\be
\begin{split}\label{uf}
& \int_{\Omega} \Theta_\eps(u(t))\,dx+\al\iint_{Q_t}|D T_n(G_k(u))|^2[\eps+|T_n(G_k(u))|]^{\sigma-2} \,dx\,ds\\
& \quad \le \int_{\Omega} \Theta_\eps(u_0)\,dx+\gamma  \iint_{Q_t} | D G_k(u)|^q\biggl(\int_0^{T_n(G_k(u))}(\eps+|z|)^{\sigma-2} \,dz\biggr)\,dx\,ds
\end{split}
\ee
where we have set $\displaystyle\Theta_\eps(v)=\int_0^v \left(\int_0^{T_n(G_k(z))} (\eps+|s|)^{\sigma-2}\,ds \right)\,dz$.
% and $A_{k}:=\{(s,x)\in Q_t \quad \text{s.t.}\quad |u(s,x)|>k\}$.

Now we can estimate
\begin{equation*}
\begin{array}{c}
\ds
 \iint_{Q_t} | D G_k(u)|^q\biggl(\int_0^{T_n(G_k(u))}(\eps+|z|)^{\sigma-2} \,dz\biggr)\,dx\,ds\\
[3mm]\ds
\leq 
  c \iint_{Q_t} | D G_k(u)|^q (\eps+|T_n(G_k(u))|)^{\sigma-1} \,dx\,ds
\\
[3mm]\ds
\leq c \iint_{Q_t} | D G_k(u)|^q (\eps+|T_n(G_k(u))|)^{(\sigma-2)\frac q2}  (\eps+|T_n(G_k(u))|)^{\frac\sigma 2(2-q)}\, (\eps+|T_n(G_k(u))|)^{q-1}dx\,ds\\
[3mm]\ds
 \leq 
c  \sup\limits_{s\in [0,t]} \left[\into (\eps+|T_n(G_k(u))|)^{\sigma}dx\right]^{q-1}
\end{array}
\end{equation*}
where we used again the H\"older inequality with indices $\left(\frac{2}{q},\frac{2^*}{2-q},\frac{N}{2-q}\right)$ (recall that $\sigma= \frac{N(2-q)}{q-1}$) and Sobolev's embedding, and the information that $u$ satisfies \eqref{beta} (so the constant $c$ in the last inequality depends on the $L^2(0,T;H_0^1(\Omega))$ norm of $(1+|u|)^\frac{\si}{2}u$). Finally,
from \rife{uf} we get
\begin{equation*}
\begin{array}{c}
\ds \int_{\Omega} \Theta_\eps(u(t))\,dx  +\al\iint_{Q_t}|D T_n(G_k(u))|^2[\eps+|T_n(G_k(u))|]^{\sigma-2} \,dx\,ds 
\\
[3mm]\ds\le \int_{\Omega} \Theta_\eps(u_0)\,dx+c  \sup\limits_{s\in [0,t]} \left[\into (\eps+|T_n(G_k(u))|)^{\sigma}dx\right]^{q-1}\,.
\end{array}
\end{equation*}
Since $q-1<1$, this inequality implies first that $T_n(G_k(u))$ is uniformly bounded (with respect to $n$) in $\limitate\sigma$. Hence we deduce that $u\in \limitate\sigma$ and,   taking the limit as $n\to \infty$ in the previous inequality, we obtain  an inequality of the type \eqref{gk0}. This means that  we can reason as in \cref{defequiv} to deduce the continuity regularity $u\in C^0([0,T];L^{\sigma}(\Omega))$.
Finally, we are only left with the fact that solutions have finite energy after any positive time. This is the content of \cref{renreg} below and concludes this proof.
\end{proof}

\begin{lemma}\label{renreg} Let $q>2-\frac N{N+1}$ and $u_0\in \elle\sigma$. Then, if $u$ is a renormalized solution according to \cref{defrin}, we have $u\in L^2_{loc}(0,T;H_0^1(\Omega))$ and, for any $r\in (\sigma,2]$ and $t\leq 1$,   
 $$
 \|u(t)\|_{L^{r}(\Omega)}\le  c\, t^{- N\frac{(r-\si)}{2r\si}}
 $$
with $c= c(\alpha, \gamma, q,N, u_0, |\Omega|)$. In particular, $u$ satisfies the conclusion and the estimates, both \rife{gkr} and \rife{decayu}, of \cref{Prop2r}.
 \medskip
 
Finally, if $q<2-\frac N{N+1}$ and $u_0\in \elle1$, any $u$ which is a  renormalized solution according to \cref{defrin2} satisfies the same conclusion with $\sigma=1$ and $r>1$.
 \end{lemma}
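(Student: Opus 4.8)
The plan is to run the regularizing mechanism of \cref{Prop2r} directly inside the renormalized formulation \rife{sr2}, rather than invoking \cref{propW11} (which presupposes the finite-energy class of \cref{def1}). I may take for granted what the converse part of the proof of \cref{equivren} has already established for a renormalized solution in the sense of \cref{defrin}: that $u\in\continue\si$, that $u$ satisfies \rife{beta}, that $H(t,x,Du)\in L^1(Q_T)$, and that $T_k(u)\in L^2(0,T;H_0^1(\Omega))$ for every $k>0$. The one quantitative ingredient I need is the contraction smallness $\sup_{t}\|G_k(u(t))\|_{\elle\si}^\si<\bar\de$ for $k$ large; this is \emph{not} obtained by citing \cref{lemGk} (whose hypotheses require finite local energy) but is exactly the content of inequality \rife{gk0} in the converse proof of \cref{equivren}, whose right-hand side tends to $0$ as $k\to\infty$, uniformly in $t$ and uniformly on equi-integrable families of data.

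First I would fix $\si<r\le2$ and test the renormalized equation with $S'(u)=\te_n(u)\,\Phi'_\eps(u)$, where $\Phi'_\eps(u)=\int_0^{G_k(u)}(\eps+|v|)^{r-2}\,dv$ and $\te_n$ is the cutoff \rife{suppcmpt}; the factor $\te_n$ makes $S'$ compactly supported as \cref{defrin} demands, while $\eps>0$ regularizes the weight near $u=0$, which is needed since $r\le2$. Passing to the limit $n\to\infty$ is the genuinely renormalized step: the defect produced by $\te_n'(u)$ is controlled by $\tfrac1n\iint_{\{n<|u|<2n\}}a(t,x,u,Du)\cdot Du\,dx\,dt$, which vanishes precisely because of \rife{beta} (the Remark after \cref{defrin}). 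After this limit, using \eqref{A1}, \eqref{H}, Hölder with exponents $\left(\tfrac2q,\tfrac{2^*}{2-q},\tfrac N{2-q}\right)$ and Sobolev exactly as in \rife{3H}, the smallness $\|G_k(u)\|_{\elle\si}<\bar\de$ absorbs the superlinear term and yields the inequality of type \rife{d},
\[
\frac{d}{ds}\into \varTheta_{\eps}(G_k(u(s)))\,dx+\frac{\al}{2c_S}\Big(\into |\varPhi_{\eps}(G_k(u(s)))|^{2^*}\,dx\Big)^{\frac2{2^*}}\le 0,
\]
where $\varPhi_\eps(v)=\int_0^v(\eps+|w|)^{\frac r2-1}\,dw$ and $\varTheta_\eps(v)=\int_0^v\big(\int_0^w(\eps+|z|)^{r-2}\,dz\big)\,dw$.

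From here the argument coincides with Steps 1--2 of \cref{Prop2r}: a Gagliardo--Nirenberg interpolation relates $\int\varTheta_\eps$ to $(\int|\varPhi_\eps|^{2^*})^{2/2^*}$, producing $y'+\tfrac{c_r}{C_0}\,y^{1+\rho}\le0$ with $\rho=\tfrac{2\si}{N(r-\si)}$ and $C_0$ a power of $\|G_k(u_0)\|_{\elle\si}$; integrating (cf. \cite[Lemma 2.6]{P2}) and letting $\eps\to0$ gives \rife{gkr}, and the splitting $u=G_k(u)+T_k(u)$ gives \rife{decayu}, hence $\|u(t)\|_{\elle r}\le c\,t^{-N(r-\si)/(2r\si)}$ for $t\le1$; continuity $u\in C^0((0,T];\elle r)$ follows from the equi-integrability provided by \rife{gkr} via Vitali's theorem. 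Taking $r=2$ gives $u\in L^\infty_{\mathrm{loc}}((0,T);\elle2)$, so on any $(\tau,T)\times\Omega$ the datum $u(\tau)$ lies in $\elle2$ and I may use $T_k(u)$ as an admissible test function and let $k\to\infty$ (using the $L^\infty(\tau,T;\elle2)$ bound and $H(t,x,Du)\in L^1$) to deduce $\iint_{(\tau,T)\times\Omega}|Du|^2<\infty$, i.e. $u\in\paraccaloc$. Thus $u$ is a solution in the sense of \cref{def1} on positive times and inherits all conclusions of \cref{Prop2r}.

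For the final assertion ($q<2-\tfrac N{N+1}$, $u_0\in\elle1$, $\si=1$) the scheme is unchanged, with \cref{lemGkL1} replacing the role of \rife{gk0}: the smallness of $\into\Theta(G_k(u(t)))\,dx$ controls $\|G_k(u(t))\|_{\elle1}$ and absorbs the superlinear term, while \cref{equiv1} already furnishes the local energy $T_k(u)\in\paraccaloc$; the same test functions, built on the $\Theta$ of \cref{lemGkL1} (equivalently on $(1+|u|)^{-(1-\omega)/2}$-type weights, as in \cref{estL1}), yield \rife{gkr1}--\rife{decayu1} with $\rho=\tfrac2{N(r-1)}$. I expect the main obstacle to be the $n\to\infty$ passage in the renormalized formulation---ensuring the $\te_n'(u)$ defect vanishes and that the limiting weighted identity is an inequality in the correct direction---since this is the sole point where renormalized solutions behave differently from the finite-energy solutions of \cref{Prop2r}; everything downstream is a faithful repetition of the already-established ODE and interpolation estimates.
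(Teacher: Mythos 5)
Your high-level strategy is the same as the paper's (run the mechanism of \cref{Prop2r} directly inside the renormalized formulation, taking the smallness of $\sup_t\|G_k(u(t))\|_{\elle\si}$ from the converse part of the proof of \cref{equivren}), but the key technical device — your choice of test function — does not work, and the failure occurs exactly at the step you flag as the main obstacle. With $S'(u)=\te_n(u)\int_0^{G_k(u)}(\vep+|v|)^{r-2}dv$, the cutoff-removal defect is
\begin{equation*}
\frac1n\iint_{\{n<|u|<2n\}}a(t,x,u,Du)\cdot Du\,\Big|\int_0^{G_k(u)}(\vep+|v|)^{r-2}dv\Big|\,dx\,dt\,,
\end{equation*}
and it enters the inequality with the unfavorable sign (it is a positive contribution to the right-hand side, comparable on the annulus to the energy term you are trying to keep, so it cannot be dropped or absorbed). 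Your claim that it is "controlled by $\frac1n\iint_{\{n<|u|<2n\}}a\cdot Du$, which vanishes because of \rife{beta}" would be correct only if $S'$ were bounded; here $|S'(u)|\sim n^{r-1}$ on $\{n<|u|<2n\}$, so the defect is of order $n^{r-2}\iint_{\{n<|u|<2n\}}a\cdot Du$. Since \rife{beta} only yields $\iint_{\{n<|u|<2n\}}|Du|^2\le c\,n^{2-\si}\,\omega_n$ with $\omega_n\to0$ (no rate), the defect is of order $n^{r-\si}\omega_n$ with $r>\si$, which need not tend to zero; no subsequence trick can save this, as $\omega_n$ may decay arbitrarily slowly.

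The paper's proof avoids precisely this by capping the growth of the weight at the rate allowed by \rife{beta}: it first proves the differential inequality \rife{w11u} for every $S$ with $0\le S''(x)\le L(1+|x|)^{\si-2}$ (condition \rife{S1}), because then $|S'(u)|\lesssim(1+|u|)^{\si-1}$ and the defect from a support cutoff $\te_m$ is bounded by $c\iint_{\{m<|u|<2m\}}|D[(1+|u|)^{\frac\si2-1}u]|^2\to0$; and it then chooses $S'(\eta)=\int_0^\eta(\vep+|\xi|)^{\si-2}(\vep+|T_n(\xi)|)^{r-\si}d\xi$, where the inner truncation $T_n$ in the weight guarantees \rife{S1} (with a constant depending on $n$), so the parameter $n$ is sent to infinity only in the integrated, uniform-in-$n$ ODE estimates, and $\vep\to0$ afterwards. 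This two-parameter structure ($m$ for the support cutoff, $n$ for the weight truncation) is the missing idea in your argument. The same issue affects your last paragraph: the $\Theta$-type weights of \cref{lemGkL1} have bounded $\Theta'$ and hence can only reproduce $L^1$-contraction, not $L^r$-regularization; for the $L^1$ case the paper again uses the capped weight $(\vep+|\xi|)^{\de-2}(\vep+|T_n(\xi)|)^{r-\de}$ with $\de<1$, which keeps $S'$ bounded so that the support cutoff can be removed. (Also, you should not invoke \cref{equiv1} for $T_k(u)\in L^2(0,T;H^1_0(\Omega))$, since its proof cites the present lemma; that regularity is in any case part of \cref{defrin2}.)
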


\proof  We first deal with the case $q>2-\frac N{N+1}$ and $u_0\in \elle\sigma$, $\sigma= \frac{N(2-q)}{q-1}$. 

Let $S'(u)\vp$ in \eqref{sr2} with $\vp=1$, $S\in W^{2,\infty}(\mathbb{R})$ such that $S'(0)=0$ and  
\begin{equation}\label{S1}
0\le S''(x)\le L(1+|x|)^{\sigma-2}.
\end{equation}
Then we claim that the following differential inequality holds:
\begin{equation}\label{w11u}
\frac{\text{d}}{\text{d}t}\int_{\Omega}S(u(t))\,dx+\al\int_{\Omega}|D u|^2 S''(u)\,dx\le \ga\int_\Omega |D u|^q\, |S'(u)|\,dx\qquad \text{a.e.}\,\, t\in(0,T).
\end{equation}
Again, since our choice of $S'(\cdot)$ is not an admissible one, we prove this claim taking $\xi\,\tS'_n(u)=\xi S'(u)\te_n(u)$ in \eqref{sr2} where $\xi\in C_c^\infty(0,T)$ and $\te_n(\cdot)$ as in \eqref{suppcmpt}. Then, we obtain
\begin{equation*}
\begin{array}{c}
\ds
-\int_0^T\xi'\integrale \tS_n(u)\,dx\,ds\\
[3mm]\ds
\le -\al\int_0^T\xi\int_{\Omega}|D u|^2\left[ S''(u) \te_n(u) +S'(u)\te_n'(u)\right]\,dx\,ds+\ga \int_0^T\xi\int_{\Omega}|D u|^q\, |S'(u)|\,dx\,ds.
\end{array}
\end{equation*}
The r.h.s. is uniformly bounded in $n$ thanks to \eqref{beta} and \eqref{S1} (in particular, $|Du|^q |S'(u)|\leq c |Du|^q (1+|u|)^{\sigma-1}$ which is $L^1(Q_T)$ because of \rife{beta} and $u\in \limitate\sigma$).

Moreover, \eqref{S1} implies that
\[
\iint_{Q_t} |D u|^2S'(u)\te_n'(u)\,dx\,ds
\le c\iint_{\{n<|u|<2n\}}|D ((1+|u|)^{\frac{\si}{2}-1}u)|^2\,dx\,ds\to 0
\]
by the definition of $\te_n(\cdot)$ in \eqref{suppcmpt}. Then we conclude that, letting $n\to\infty$, we recover \eqref{w11u}.\\
In particular, we have
\begin{equation}\label{w11rin}
\frac{\text{d}}{\text{d}t}\int_{\Omega}S(G_k(u(t)))\,dx+\al\int_{\Omega}|D G_k(u)|^2 S''(G_k(u))\,dx\le\ga\int_\Omega |D G_k(u)|^q\, |S'(G_k(u))|\,dx,
\end{equation}
since $S(G_k(s))$ verifies \rife{S1} whenever $S(\cdot)$ does. 
\medskip

We proceed requiring that the function $S(\cdot)$ satisfies
\begin{equation}\label{S2}
 |S'(x)| \le c\, \left(S''(x)\right)^\frac{q}{2} \, \left |\int_0^{x} \left(S''(y)\right)^\frac{2-q}{2}\,dy\right| \qquad \forall x\in \R
\end{equation}
for some $c>0$.  Since
\[
\left |\int_0^{G_k(u)} \left(S''(v)\right)^\frac{2-q}{2}\,dv \right | \le \left(\int_0^{|G_k(u)|} \left(S''(v)\right)^\frac{1}{2}\,dv\right)^{2-q}|G_k(u)|^{q-1}
\]
we get the inequality
\begin{equation*}
\frac{\text{d}}{\text{d}t}\int_{\Omega}S(G_k(u(t)))\,dx+\al \integrale |D \Phi(G_k(u))|^2\,dx\\\le c\integrale |D \Phi(G_k(u))|^q\, |\Phi(G_k(u))|^{2-q}\, |G_k(u)|^{q-1}\,dx
\end{equation*}
where $\Phi(x)=\int_0^x \left(S''(y)\right)^\frac{1}{2}\,dy$. This means that an application of H\"older's inequality with indices $\left(\frac{2}{q},\frac{2^*}{2-q},\frac{N}{2-q}\right)$ and Sobolev's embedding give us
\[
\frac{\text{d}}{\text{d}t}\int_{\Omega}S(G_k(u(t)))\,dx+\al \integrale |D \Phi(G_k(u))|^2\,dx\le c\sup_{t\in (0,T)}\|G_k(u(t))\|_{L^{\sigma}(\Omega)}^{q-1}\integrale |D \Phi(G_k(u))|^2\,dx
\]
and then, for $k$ sufficiently large  (eventually only depending on $u_0$)
\begin{equation}\label{phi}
\frac{\text{d}}{\text{d}t}\int_{\Omega}S(G_k(u(t)))\,dx+c \integrale |D \Phi(G_k(u))|^2\,dx\le 0.
\end{equation}
We now choose, for $r>\si$,  
$$
S'(\eta)= \int_0^\eta (\vep+|\xi|)^{\sigma-2} (\vep+ |T_n(\xi)|)^{r-\sigma}d\xi\,.
$$
We easily estimate, for constants $c_0, c_1$ independent of $\vep$ and $n$, 
$$
\begin{array}{c}
|S'(\eta)|\leq c_0 |\eta|(\vep+|\eta|)^{\sigma-2} (\vep+ |T_n(\eta)|)^{r-\sigma},\\
[3mm]\ds
 |S(\eta)| \leq c_1 |\eta|^2(\vep+|\eta|)^{\sigma-2} (\vep+ |T_n(\eta)|)^{r-\sigma}\,.
\end{array}
$$ 
Hence, interpolating $2=   2^*\omega+2\frac\si r (1-\omega)$ we have  
\besac 
\ds
\into S(G_k(u)) \,dx  \leq c_1  \into  | G_k(u)|^2(\vep+|G_k(u)|)^{\sigma-2} (\vep+ |T_n(G_k(u))|)^{r-\sigma}  \,dx
\\
[3mm]\ds
\leq c_1 \into \left[| G_k(u)| (\vep+|G_k(u)|)^{\frac{\si}{2}-1} (\vep+ |T_n(G_k(u))|)^{\frac{r- \si}{2} }\right]^{ 2^*\omega} 
  (\vep+|G_k(u)|)^{\sigma(1-\omega)}  \,dx
\\
[3mm]\ds \leq  c_1\left(\into   \left[| G_k(u)| (\vep+|G_k(u)|)^{\frac{\si}{2}-1} (\vep+ |T_n(G_k(u))|)^{\frac{r-\si}{2} }\right]^{ 2^*} \,dx\right)^\omega   \left(\into  \left[ \vep+|G_k(u)|\right]^{\si}\,dx\right)^{1-\omega}  
\\
[3mm]\ds \leq K \left(\into  \Phi(G_k(u))^{ 2^*} \,dx\right)^\omega
\eesac
for some $K= c\left[\vep+ \|G_k(u)\|_{\continue\si}\right]$. 
Recalling the value of $\omega$, Sobolev inequality and \rife{phi} we conclude that
$$
\frac{\text{d}}{\text{d}t}\int_{\Omega}S(G_k(u(t)))\,dx+c \left(\int_{\Omega}S(G_k(u(t)))\,dx\right)^{\gamma}\le 0
$$
where $\gamma= 1+ \frac{2\sigma}{N(r-\si)}$.  This implies, as in \cref{Prop2r} (see \rife{distheta}) that, for any $\tau>0$,  $\into S(G_k(u)(t))dx$ is bounded  in $(\tau,T)$ by some constant which is independent of $n$ and of $\vep$. As a first consequence, letting $n$ go to infinity we deduce that $u\in L^\infty_{loc}((0,T); \elle{r})$ for every $r\leq 2$. Indeed, as $n\to \infty$ the function $S(\cdot)$ becomes
$$
S(r)= \int_0^r \int_0^\eta (\vep+|\xi|)^{r-2}d\xi d\eta\,.
$$
In addition, when $r=2$, going back to \rife{w11rin}, integrating  and letting $n\to \infty$ we also  deduce  that $u\in \paraccaloc$.  
Finally, since the estimate was uniform with respect to $\vep$, we can let $\vep \to 0$ and we obtain the same precise estimate \rife{gkr} as in \cref{Prop2r}. Estimate \rife{decayu} will follow consequently. Notice that, once $u$ has been estimated in $\elle2$, then the estimate with in $\elle r$ with $r>2$  follows directly from \cref{Prop2r} again.
\medskip

Let us now deal with the case that $q<2-\frac N{N+1}$ and $u_0\in \elle1$. Here we follow the same steps as before except that at first we are only allowed to use a  function $S(\cdot)$ such that $S'(\cdot)$ is bounded. Therefore we choose
$$
S'(\eta)= \int_0^\eta (\vep+|\xi|)^{\de-2} (\vep+ |T_n(\xi)|)^{r-\de}d\xi\,
$$
with $\de<1$. After estimating in a similar way as above both $S'(\cdot)$ and $S(\cdot)$ we interpolate $2=   2^*\omega+\frac2 r (1-\omega)$ (with $r>1$) and we have  
\besac 
\ds
\into S(G_k(u)) \,dx  \leq c_1  \into  | G_k(u)|^2(\vep+|G_k(u)|)^{\de-2} (\vep+ |T_n(G_k(u))|)^{r-\de}  \,dx
\\
[3mm]\ds
\leq c_1 \into \left[| G_k(u)| (\vep+|G_k(u)|)^{\frac{\de-2}{2}} (\vep+ |T_n(G_k(u))|)^{\frac{r- \de}{2} }\right]^{ 2^*\omega} 
  (\vep+|G_k(u)|)^{(1-\omega)}  \,dx
\\
[3mm]\ds \leq   K \left(\into  \Phi(G_k(u))^{ 2^*} \,dx\right)^\omega
\eesac
for   $K= c\left[\vep+ \|G_k(u)\|_{\continue1}\right]$.  This way we obtain as before
$$
\frac{\text{d}}{\text{d}t}\int_{\Omega}S(G_k(u(t)))\,dx+c \left(\int_{\Omega}S(G_k(u(t)))\,dx\right)^{\gamma}\le 0
$$
where $\gamma= 1+ \frac{2}{N(r-1)}$ and the constant $c$ is independent of $n$ and $\vep$. As $n\to \infty$ and $\vep\to 0$ we find  $S(\eta)= \frac{|\eta|^r}{r(r-1)}$ and we deduce the desired estimate.
\endproof

\vskip1em
\begin{proof}[ Proof of \cref{equiv1}.]

Let  $u\in \continue1\cap \paraccaloc$ satisfy \rife{1loc}. We are going to show that it also verifies   \cref{defrin2}.\\
To this purpose, first of all we use \cref{anche} which says that $u\in {\mathcal T}^{1,2}_0(Q_T)$, $u\in M^{\frac{N+2}{N}}(Q_T)$ and $|Du|\in M^{\frac{N+2}{N+1}}(Q_T)$. In particular, this latter information implies that $H(t,x,Du)\in \parelle1$. Now,  by a standard finite energy argument,  the test function $S'(u)\vfi$ is allowed in $(\tau, t)$ for any $\tau >0$, where $S'\in W^{1,\infty}(\R)$, $S'$ has compact support and $S'(0) \vfi$ vanishes on $\partial \Omega$. On the other hand, the global informations that $u\in \continue 1$, that $T_k(u)\in L^2(0,T;H^1_0(\Omega))$ and that $H(t,x,Du)\in \parelle1$, allow us to let $\tau\to 0$ so that \rife{sr22} holds true.
Finally, recovering \rife{sr32} is standard in $L^1$ theory (see e.g. \cite[Theorem $2$]{Bl}, \cite[Lemma $3.2$ and Remark $2.4$]{BlMu}): in our setting, it is enough to use $(1-\theta_n(u)){\rm sign}(u)$ as test function in $(\tau, T)$, then letting $\tau\to 0$ one obtains the usual estimate
$$
\frac1n \iint_{\{n\le |u|\le 2n\}}a(t,x,u,D u)\cdot D u\,dx\,dt \leq \int_{\{|u_0|>n\}} |u_0| dx+   \iint_{\{  |u|>n\}} |H(t,x,Du)|dxdt
$$
which implies \rife{sr32}. Therefore, $u$ is a solution in the sense of  \cref{defrin2}.
\medskip

Conversely, assume now that $u$ is a solution satisfying  \cref{defrin2}. 
As far as the continuity of $u(t)$ in $L^1(\Omega)$ is concerned, let $\vp=1$ and $S'(u) =\frac{T_\omega(G_k(u))}{\omega}$,    $\omega>0$,  in \eqref{sr22}. We observe that such a test function can be made rigorous in the renormalized setting by taking $S'(u)=\frac{T_\omega(G_k(u))}{\omega}{\theta_n}(G_k(u))$ (which has compact support) and using  the asymptotic condition \eqref{sr32} to let $n\to \infty$.  Once we take $S'(u) =\frac{T_\omega(G_k(u))}{\omega}$, then integrating by parts the time derivative and letting   $\omega\to 0$ provide us with  the $L^1$ estimate
\begin{equation}\label{diss}
\integrale |G_k(u(t))\,dx\le M
\end{equation}
where 
\[
M=\integrale |G_k(u_0)|\,dx+\gamma\iint_{Q_T}|\N G_k(u)|^q\,dx\,dt.
\]
Since $|\N u|\in M^{\frac{N+2}{N+1}}(Q_T)$ (see \cref{estL1}) we deduce that $\|G_k(u(t))\|_{\elle1}$ tends to zero (uniformly in $t$) as $k\to \infty$. Since $T_k(u)\in C([0,T];L^1(\Omega))$ for every $k>0$ (see \cref{cont1}), it is easy to conclude that $u\in \continue1$. Finally, from \cref{renreg}, we have that $u\in \paraccaloc$. The equality \rife{1loc} now follows straightforwardly, by taking $\vfi\in C^\infty_c(Q_T)$ and $S'(u)= \theta_n(u) $ in \rife{sr22} and letting $n\to \infty$.
\end{proof}
%\end{appendices}

\end{document}